\numberwithin{equation}{section}
\definecolor{cmu}{RGB}{128,18,18}
\newtheorem{theorem}{{\bf Theorem}}[section]
\newtheorem{definition}[theorem]{Definition}
\newtheorem{corollary}[theorem]{{\bf Corollary}}
\newtheorem{lemma}[theorem]{{\bf Lemma}}
\newtheorem{prop}[theorem]{{\bf Proposition}}
\newcommand{\e }{\varepsilon}
\newcommand{\p}{\partial}
\renewcommand{\O}{\Omega}
\renewcommand{\div}{\operatorname{div}}
\newcommand{\dist}{\mathrm{dist}}
\newcommand{\tr}{\operatorname{tr}}
\renewcommand{\leq}{\leqslant}
\renewcommand{\geq}{\geqslant}
 \newcommand{\bxi}{\boldsymbol{\xi}}
 \newcommand{\bnu}{\boldsymbol{\nu}}
  \newcommand{\bphi}{\boldsymbol{\varphi}}
  \newcommand{\bgamma}{\boldsymbol{\gamma }}
\newcommand{\HH}{\mathbf{H}}
\newcommand{\1}{\mathbf{1}}
\renewcommand{\parallel}{\mathbin{\!/\mkern-5mu/\!}}
\newcommand{\R}{\mathbb{R}}
\newcommand{\nn}{\mathbf{n}}
\newcommand{\mm}{\mathfrak{m}}
\newcommand{\uu}{\mathbf{u}}
\newcommand{\cc}{\mathcal{C}}
\newcommand{\dd}{\mathrm{d}}
\newcommand{\vv}{\mathbf{v}}
\newcommand{\pp}{\mathbf{p}}
\renewcommand{\tilde}{\widetilde}
\newcommand{\mres}{\mathbin{\vrule height 1.6ex depth 0pt width
0.13ex\vrule height 0.13ex depth 0pt width 1.3ex}}
\def\({\left(}
\def\){\right)}
\newcommand{\picdis}[1]{#1}
 \def\[{\begin{equation}}
 \def\]{\end{equation}}
\begin{document}

\title{Phase Transition  of Parabolic Ginzburg--Landau Equation   with  Potentials of High-Dimensional Wells}

 \author{Yuning Liu }
\address{NYU Shanghai, 567 Yangsi W road, Pudong, Shanghai 200126, China, and NYU-ECNU Institute of
Mathematical Sciences at NYU Shanghai, 3663 Zhongshan Road North, Shanghai, 200062, China}
\email{yl67@nyu.edu}

\begin{abstract} 
In this work, we study the co-dimensional one  interface limit  and geometric motions of  parabolic   Ginzburg--Landau  systems       with  potentials of high-dimensional wells.
The main result generalizes the one by  Lin et al. (Comm. Pure
Appl. Math., 65(6):833-888, 2012) to a dynamical case.
In particular combining      modulated energy methods and weak convergence methods,  we derive the    limiting   harmonic heat flows   in the inner and outer bulk regions segregated by the sharp interface, and a non-standard boundary condition for them. These results are valid   provided that the initial datum of the system is  well-prepared  under   natural energy assumptions.  

\end{abstract}

 \maketitle




\section{Introduction}
In  the   work of Keller--Rubinstein--Sternberg \cite{MR978829,MR1025956}, they  created    a general gradient flow theory     in the descriptions of     fast reaction and  slow diffusion,  and established  its relations to    the mean curvature flow (MCF) and the harmonic heat flows into manifolds. These works involve  some formal statements associated with the multiple components phase transitions with higher dimensional wells. Such statements are also refereed to as the {\it Keller--Rubinstein--Sternberg problem}. More precisely they investigated  the vectorial Allen--Cahn equation (also called Ginzburg--Landau equation)
\[\p_t \uu_\e=\Delta \uu_\e-\e^{-2} \p F(\uu_\e),\label{GL introduction}\]
where $\uu_\e(x,t):\O\times (0,T) \mapsto \R^n$ is a mapping depending on a small parameter $\e>0$ and $\O\subset \R^d$ is a bounded domain with $C^1$ boundary. Here    $F(\uu)$ is a       double  equal-well   potential with ground state being the disjoint union of two  smooth closed submanifolds $\mm_\pm \subset \R^n$, and $\p F(\uu)$ is the differential of $F$ at $\uu$. The {\it Keller--Rubinstein--Sternberg problem} is  concerned with the limiting behavior of $\uu_\e$ as $\e$ tends to zero.
In the work of Lin--Pan--Wang \cite{Lin2012a}, they  set up an analytic  program to  
rigorously justify the formally asymptotic  analysis given in the aforementioned works of Keller--Rubinstein--Sternberg. To be more precise 
they considered  the  minimizers   of  the Ginzburg--Landau functional
\[A_\e(\uu_\e):=\int_\O\( \frac{\e }2 |\nabla \uu_\e |^2+\frac 1\e  F(\uu_\e )\)\, dx\label{GL energy}\] 
 that satisfy  well-prepared  boundary conditions on $\p\O$. They  established  the co-dimensional one   interface limit  of \eqref{GL energy}, which essentially generalizes   the $\Gamma$-convergence of  Modica--Mortola \cite{MR0445362}
 to    vectorial cases (though they did not state their main theorems in such an abstract manner). More importantly, they showed   that the limits  of $\uu_\e$ in the bulk regions  correspond  to  minimizing harmonic maps into $\mm_\pm$, and  they   derived  a non-standard boundary condition (also called the minimal pair condition, cf. \eqref{thm minimal pair} below) which services as  a constraint  on   the limiting harmonic maps when restricted on the interface. Such a boundary condition is a  new feature that arises  due to minimization of   surface tensions  in vectorial cases. Note that such a condition   holds trivially in the   case of  scalar Allen-Cahn equation.
 
In this work we shall try to  generalize Lin--Pan--Wang \cite{Lin2012a} to the parabolic system \eqref{GL introduction}  by  proving  the following statements: Firstly, for well-prepared initial datum, as $\e$ tends to $0$, the solution gradients of  \eqref{GL introduction} will undergo phase transitions 
 across  a moving interface $\Sigma_t$ that propagates  according to  (two-phase) MCF. Secondly, in the two bulk regions $\O_t^\pm$ segregated by the interface $\Sigma_t$,  the solutions will converge to  harmonic heat flows mapping into $\mm_\pm$ respectively.  Finally,  the one-sided traces of  the limiting harmonic heat flows on $\Sigma_t$ must satisfy the minimal pair  condition   firstly derived in \cite{Lin2012a}.

  Our first result is a vectorial analogy of   the co-dimensional one  scaling limit of   scalar parabolic Allen--Cahn  equation to the  MCF, i.e. the special case of  \eqref{GL introduction} when   $\mm_\pm$ are  two distinct points $a^\pm\in  \R^1$. There have been major progresses  in the  scalar  case over  the last thirty  years, made under   different frameworks. Here we   mention two classes of results  and leave the  discussions of  some  others in the sequel.  One is  the convergence to a Brakke's flow by Ilmanen \cite{MR1237490} using a version of Huisken's monotonicity formula together with tools from geometric measure theory. See also \cite{MR1425577,MR1803974,MR2040901,MR2253464,MR3495430,MR2440879} and the references therein for further renovations. Despite of its energetic nature,  a major difficulty of such an approach is   the control of the so called {\it  discrepancy measure}, and in every existing literature in this direction  the method     relies   crucially  on a version of   Modica's maximum principle \cite{MR803255}. 
  There have been attempts to generalize such a method to   vectorial cases. However, it is not clear  whether  Modica's maximum principle holds for  elliptic system. 
  Another approach, which relies more  on the parabolic comparison principle,  is  the global in time  convergences  to the  viscosity solution of  MCF. These are weak solutions to the MCF built independently by    Chen--Giga-Goto \cite{MR1100211} and Evans--Spruck   \cite{MR1100206}. Concerning the  convergence of scalar Allen--Cahn equation to such solutions, we refer the readers to the work of Evans--Soner--Souganidis \cite{MR1177477}, the work  of Soner \cite{MR1674799} and   the references therein.  These two approaches both give global in time (weak) convergences to   weakly defined  solutions of   MCFs  up to their life spans. However, as  their technics  involve  parabolic maximum principle and   comparison principle in one way or another,  it is not clear how to use them to attack   vectorial cases  in general. It is worth mentioning that for radially symmetric initial datum and when $\mm_\pm$ are two concentric circles, Bronsard--Stoth \cite{MR1443865} obtain  global in time convergences to MCF of planar circles.

 To the best of our  knowledge, there are mainly two approaches   to rigorously justify   the convergences of  the vectorial Allen--Cahn equations, both  assuming that   the limiting interface propagation  problem  has a (local in time)  classical solution. Compared with the aforementioned methods which lead to global in time (weak) convergences, they have quite different natures.
 One of these methods is the asymptotical expansion technics  developed by De Mottoni--Schatzman \cite{MR1672406} and  later by Alikakos--Bates--Chen \cite{MR1308851}, which has been  used  recently  in  \cite{Fei2023aa,MR4059996} for matrix-valued cases of \eqref{GL introduction}. 
  In particular, Fei--Lin--Wang--Zhang \cite{Fei2023aa} studied  the case when $\mm_\pm=O^\pm(n)$, the $n$-dimensional orthogonal group. By inner-outer expansions together with a gluing procedure,  such an approach reduces the convergence problem to a linear stability problem given that the limiting system (not merely  the limiting interface motion) is strongly well-posed. 
  The major challenge of this approach is  the analysis of the spectrum of the linearized operator at the  {\it minimal orbits} (also called optimal profile) or their variants. 
  Indeed, one of the novelties  of   \cite{Fei2023aa} is to devise the so called {\it quasi-minimal orbits}, overcoming the lack of {\it minimal orbits} in the bulk regions, and to derive the spectrum stability of the linearized operator at such orbits making use of the   minimal pair  condition.
  Finally we refer   Lin--Wang \cite{MR4002307} for a general theory for the strong well-posedness of the   limiting system.
  
Another approach, which also assumes a regular solution of the limiting interface motion but not the limiting  harmonic heat flows,   is the relative entropy  method developed  by Fischer--Laux--Simon  \cite{fischer2020convergence}, motivated by Jerrard--Smets \cite{MR3353807} and Fischer--Hensel \cite{MR4072686}. A generalization to matrix--valued  case has been done by Laux-Liu  \cite{MR4284534} to study the isotropic--nematic transition in Landau--De Gennes model of liquid crystals. More recently, in \cite{liu2021sharp} the author used these methods, together with those developed   by   Lin--Wang \cite{lin2020isotropic}, to investigate  the convergence problem of  an anisotropic 2D Ginzburg--Landau model.

Now we  introduce a minimum amount of  terminologies   necessary for stating  the main result of this work.
 Let 
 \[ \mm_\pm\text{  be two disjoint   smooth,   closed,  connected  submanifold   in } \R^n.\label{mm assumption}\] 
For technical purposes we assume $0\in\mm_-$. 
Let   $F:\R^n\to [0,\infty)$ be a smooth   function  with $\mm_\pm$ being its double equal-wells: 
\[\operatorname{Arg~min} F= \mm:=  \mm_+\sqcup  \mm_-. \label{limit manifold}\]
We assume that $F(\uu)$ only depends on the distance from $\uu$ to $\mm$. That is,  
\[F(\uu)=  f(\dd_\mm^2(\uu)),\label{bulk potential}\] 
where $\dd_\mm(\uu)$ is the  distance (see \eqref{dN global} below for the full definition), and $ f(s)  \in C^2(\R^+, \R^+)$ satisfies  
  \begin{align} \label{bulk2} 
  \begin{cases}
  f(s) =  s& \text{ if } 0\leq s\leq \delta_0^2,\\
 f(s)= 2\delta_0^2    & \text{ if  } s\geq 2\delta_0^2.
  \end{cases} 
  \end{align} 
In \eqref{bulk2}  $\delta_0>0$ is a small number so that  the nearest-point projection $P_\mm$ from $B_{2\delta_0}(\mm)$, the $2\delta_0$-tubular neighborhood of $\mm$,  to  $\mm$ is smooth.
  
 We consider  the following  initial boundary value problems  on a bounded domain  $\O\subset \R^d$   with $C^1$ boundary:
\begin{subequations}\label{Ginzburg-Landau sys}
\begin{align}
\partial_{t} \uu_\e &= \Delta \uu_\e  -  \e ^{-2}\p   F(\uu_\e )&&~\text{in}~ \Omega\times (0,T),\label{Ginzburg-Landau}\\
\uu_\e  &=\uu_{\e}^{in} &&~\text{in}~\Omega\times \{0\},\\
\uu_\e &=\mathbf{g} &&~ \text{on}~\p\O\times (0,T).\label{bc of omega}
\end{align}
 \end{subequations}
Here   $\p   F(\uu)$ is the gradient  of $F(\uu)$, and  $\mathbf{g}:\overline{\O}\mapsto \mm_-$ is a given  smooth  mapping.
 Our main result is concerned with the asymptotical  behaviors  of solutions to \eqref{Ginzburg-Landau sys} for well-prepared initial datum. To give an analytic characterization  of such initial datum,  we need to set up the geometry of the interface motion. To this end, we  assume  that 
\begin{equation}\label{interface}
 \Sigma=\bigcup_{t\in [0,T]}\Sigma_t \times \{t\}~\text{is a smoothly evolving closed hypersurface in}~\O,
\end{equation}
 starting from a closed smooth surface $ \Sigma_0\subset  \O$. We denote by  $\O^\pm_t$   the domain segregated  by $\Sigma_t$, and   by 
 \begin{align}\label{signed dist}
 \dd_\Sigma(x,t) \text{   the signed-distance  from } x \text{ to the set } \Sigma_t  \text{ taking  positive  values in  }\O^+_t, 
 \end{align}
 and taking   negative values  in $\O^-_t=\O\backslash \overline{\O^+_t}$. In other words, 
 \begin{equation}\label{def:omegapm}
\Omega^{\pm}_t:=  \{x\in\Omega\mid \dd_\Sigma(x,t)\gtrless0\}.
\end{equation}
To avoid contact angle problems,  we assume that  $\Sigma$ stays at least $4\delta_0$ distant away  from   $\p\O$.  

 Following \cite{MR3353807,MR4072686,fischer2020convergence}, we define  the modulated energy (also called the  relative entropy energy) by 
\begin{align}
\label{entropy}
E_\e  [\uu_\e  | \Sigma](t) &:=   \int_\O \(\frac{\e}{2}\left|\nabla \uu_\e (\cdot,t)\right|^2+\frac{1}{\e} {F (\uu_\e (\cdot,t))}-  \bxi \cdot\nabla \psi_\e (\cdot,t) \)\, dx.
\end{align}
Here  $\bxi$ is an appropriate extension of the unit normal vector field  of $\Sigma$ (see \eqref{def:xi} below), and $\psi_\e$ is the 
scalar function
\begin{align}
\psi_\e (x,t):=  \dd_F \circ \uu_\e (x,t) \label{psi}
\end{align}
with   $\dd_F$   defined  by \eqref{quasidistance} below. As we shall see later on, the integrand of \eqref{entropy} is non-negative, and enjoys several   coercivity estimates including  controls  of       {\it discrepancy}   and    calibration  of the Ginzburg--Landau energy \eqref{GL energy}.
We also need the surface tension coefficient
\[c_F  :=2 \int_{0}^{\frac {\dist_{\mm}}2} \sqrt{2f(\lambda^2)} d \lambda,\label{linpanwang cf equ}\]
where  
$\dist_{\mm}$
is  the Euclidean distance between $\mm_+$ and $\mm_-$, 
and another modulated energy controlling the bulk errors:
\[ B[\uu_\e  | \Sigma](t):= \int_\O   \Big(c_F\chi-c_F+ 2(\psi_\e-c_F)^- \Big)\eta\circ \dd_\Sigma  \, dx+\int_\O  \( \psi_\e-c_F\)^+|\eta\circ\dd_\Sigma| \, dx.\label{gronwall2new}\]
In \eqref{gronwall2new}  $\chi(\cdot,t)=\1_{\O_t^+}-\1_{\O_t^-}$ and $g^\pm$ denotes the positive/negative parts of a function $g$ respectively, and $\eta$ is an appropriate  truncation of the identity function (cf. \eqref{truncation eta}). In particular,   $( \eta\circ \dd_\Sigma)\chi\geq 0$ holds in $\O$ due to our convention on  the signed-distance function, and thus  the two  integrands in \eqref{gronwall2new} are both  non-negative. We refer the readers to     the proof of Theorem \ref{thm volume convergence} below for more details on the positivity of \eqref{gronwall2new}.

The main result of this work is the following: 
\begin{theorem}\label{main thm}
Assume that the family of hypersurfaces  $\Sigma$   \eqref{interface} evolves by    mean curvature flow during $[0,T]$. If  the initial datum of  \eqref{Ginzburg-Landau sys} is well-prepared in the sense that  
\begin{equation}\label{initial}
\e\|\uu_\e(\cdot,0)\|_{L^\infty}+B[\uu_\e  | \Sigma](0)+E_\e  [\uu_\e  | \Sigma](0)\leq C_1\e
\end{equation}
for some constant $C_1$ that is independent of $\e $,  then  there exists $C_2$ independent of $\e$ so that 
\begin{subequations}
\begin{align}
 &\sup_{t\in [0,T]} E_\e  [\uu_\e  | \Sigma](t)\leq C_2\e,\label{intro cali}\\
 &\sup_{t\in [0,T]}B[\uu_\e  | \Sigma](t)\leq C_2\e,\label{intro modulodist}\\
&\sup_{t\in [0,T]}\int_\O| \psi_\e-c_F \1_{\O_t^+} |     \, dx\leq C_2\e^{1/2}.\label{volume convergencethm}
\end{align}
\end{subequations}
Moreover,   for some subsequence  $\e _k\downarrow 0$ there holds 
  \begin{equation}\label{strong global of Q}
  \uu_{\e _k}\xrightarrow{k\to\infty }   \uu^\pm ~\text{weakly  in}~  L^2(0,T;H^1_{loc}(\O^\pm_t)),
\end{equation}
where   $\uu^\pm$ are   weak solutions to the  harmonic   heat flows into $\mm_\pm $ respectively and 
\begin{equation}\label{reg limit}
\uu^\pm \in L^\infty\(0,T;H^1( \Omega^\pm_t;\mm_\pm)\),\quad \p_t \uu^\pm \in L^2\(0,T; L^2_{loc}(\O^\pm_t)\).
\end{equation} 
Furthermore,  for   every $t\in (0,T)$,  
\[|\uu^+-\uu^-|_{\R^n}(x,t)=\dist_\mm \quad  \text{ for }\mathcal{H}^{d-1}\text{- a.e.  }  x \in \Sigma_t.\label{thm minimal pair}\]
\end{theorem}  
 
A few comments are in order.  Firstly, in   \eqref{initial} the $L^\infty$ bound of the initial datum   is used (together with \eqref{bulk2})  to obtain  an uniform in space-time  $L^\infty$-bound of $\uu_\e $, i.e.
\[\|\uu_\e \|_{L^\infty(\Omega\times(0,T))}\leq c_0\label{L infinity bound1}\]
for some $\e$-independent  constant $c_0$. Such an estimate, derived by applying the  maximum principle to \eqref{Ginzburg-Landau},   enables   us to avoid  several technical complications in the passage of the limit $\e\downarrow 0$. Indeed, even in the case when $d=2$, severe   difficulties   arise in the  anisotropic model  considered in  \cite{liu2021sharp} where an estimate like \eqref{L infinity bound1} is not available.
Secondly, if  we denote the second fundamental forms of $\mm_\pm$ at points $\pp^\pm$ by $A^\pm(\pp^\pm)(\cdot,\cdot)$, respectively,
then the  theorem above claims that  the pair of mappings
\[\uu^\pm(\cdot, t): \Omega^\pm_t \mapsto \mm_\pm\subset \R^n\label{upm mapping}\]
  satisfy  the following system in the weak sense:

\[\label{twophaselimit}
\left\{
\begin{split}
\partial_{t} \uu^\pm -\Delta \uu^\pm&=   A^\pm(\uu^\pm) (\nabla \uu^\pm,\nabla\uu^\pm)  &&~\text{in}~ \bigcup_{t\in [0,T]}\Omega^\pm_t\times \{t\},\\
|\uu^+-\uu^-|_{\R^n} (x,t)  &= \dist_\mm &&\text{ for }~\mathcal{H}^{d-1}\text{- a.e  }x\in \Sigma_t,\\
\uu^- &=\mathbf{g}, &&~ \text{on}~\p\O.  
\end{split}
\right.
\]
The first equation in \eqref{twophaselimit} says that  $\uu^\pm$ are (weak) harmonic map heat flows   from the moving domains $\Omega^\pm_t$ to  the target manifolds  $\mm_\pm$ respectively. The second equation in \eqref{twophaselimit} is   referred to as the  minimal pair boundary condition (cf.    \cite{Lin2012a,MR4002307}), and  in the last equation $\mathbf{g}:\overline{\O}\mapsto \mm_-$ is a prescribed  smooth mapping. 

To make the main theorem applicable, we shall show that the class of initial datum fulfilling the condition \eqref{initial} is geometrically rich. This is stated in the following result.
\begin{theorem}\label{thm init}
For any $\delta\in (0,\delta_0)$ and
 any pair of mappings  $\uu^{in}_\pm\in H^1(\O_0^\pm;\mm_\pm)$  with 
\[|\uu^{in}_+-\uu^{in}_-|_{\R^n} (x)  = \dist_\mm, \quad\text{ for } \mathcal{H}^{d-1}\text{- a.e }~x\in \Sigma_0,\label{MC initial data}\] 
  there exist  $\uu^{in}_\e\in H^1(\O;\R^n)\cap L^\infty(\O)$ and a constant $C=C(\delta, \uu^{in}_\pm)$  so that 
\begin{subequations}
\begin{align}
\uu_\e^{in} &=\uu^{in}_{\pm}~\text{ in }~\O_0^\pm\backslash B_{2\delta}(\Sigma_0),\label{u coincide}\\
E_\e  [\uu_\e^{in}  | \Sigma_0]   & \leq C \e,\label{u cali}\\
B[\uu_\e^{in}  | \Sigma_0] &\leq C\e.\label{u bulk}
\end{align}
\end{subequations}
\end{theorem}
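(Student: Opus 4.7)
The strategy is to insert an $\e$-rescaled one-dimensional optimal profile along the normal of $\Sigma_0$ in a thin tube of fixed width $2\delta$, glued to $\uu^{in}_\pm$ outside via a smooth cutoff combined with a nearest-point projection onto $\mm_\pm$. First, using \eqref{bulk2}, produce the scalar profile $\phi:\R\to[0,1]$ solving $\dist_\mm\phi'=\sqrt{2f(\dist_\mm^2\min(\phi,1-\phi)^2)}$ with $\phi(\pm\infty)=(1\pm1)/2$ and $\phi(0)=1/2$; it is smooth, monotone, and decays exponentially as $|\phi(t)-\1_{t>0}|+|\phi'(t)|\leq Ce^{-c|t|}$. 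By equipartition together with \eqref{linpanwang cf equ}, $\int_\R\dist_\mm^2(\phi')^2\,dt=c_F$, and the antiderivative $G(\tau):=\int_0^\tau\sqrt{2f(\min(\tau',\dist_\mm-\tau')^2)}\,d\tau'$ satisfies $G(\dist_\mm)=c_F$.

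\textbf{Construction.} Parametrize $x\in N_{2\delta_0}:=\{|\dd_{\Sigma_0}|<2\delta_0\}$ as $x=y+s\bnu(y)$, $y=\pi_0(x)$, $s=\dd_{\Sigma_0}(x)$, and write $v_\pm(y):=\uu^{in}_\pm(y)$ for the boundary traces on $\Sigma_0$; by \eqref{MC initial data}, the segment $\overline{v_-(y)v_+(y)}$ is a minimal-pair line of length $\dist_\mm$. Since the trace is only $H^{1/2}$, replace $v_\pm$ inside the tube by a smoothed version $v_\pm^\delta$ obtained via mollification on $\Sigma_0$ followed by $P_{\mm_\pm}$ and then a fiber-projection onto the minimal-pair submanifold $\{(p_-,p_+)\in\mm_-\times\mm_+:|p_+-p_-|=\dist_\mm\}$, so that $v_\pm^\delta\in C^\infty(\Sigma_0,\mm_\pm)$ realize the minimal-pair identity pointwise. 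Define the straight-line profile
\[
\Psi_\e(x):=v_-^\delta(y)+\phi(s/\e)\bigl(v_+^\delta(y)-v_-^\delta(y)\bigr),\qquad x\in N_{2\delta}(\Sigma_0),
\]
and a cutoff $\chi_\delta\in C^\infty(\R,[0,1])$ with $\chi_\delta\equiv0$ on $[-\delta,\delta]$ and $\chi_\delta\equiv1$ outside $(-2\delta,2\delta)$. Set $\uu_\e^{in}:=\uu^{in}_\pm$ on $\O_0^\pm\setminus N_{2\delta}$, $\uu_\e^{in}:=\Psi_\e$ on $N_\delta$, and in the gluing shell $\O_0^\pm\cap(N_{2\delta}\setminus N_\delta)$
\[
\uu_\e^{in}(x):=P_{\mm_\pm}\!\bigl(\chi_\delta(|s|)\,\uu^{in}_\pm(x)+(1-\chi_\delta(|s|))\,\Psi_\e(x)\bigr),
\]
where $P_{\mm_\pm}$ is the smooth nearest-point projection, well-defined since both arguments lie within $\delta_0/2$ of $\mm_\pm$. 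This yields \eqref{u coincide} and $\uu_\e^{in}\in H^1(\O,\R^n)\cap L^\infty(\O)$.

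\textbf{Energy estimate \eqref{u cali}.} Partition $\O=R_0\cup R_1\cup R_2$ with $R_0=\O\setminus N_{2\delta}$, $R_1=N_\delta$, $R_2=N_{2\delta}\setminus N_\delta$. On $R_0$, $\uu_\e^{in}\in\mm_\pm$, so $F=0$ and $\psi_\e$ is locally constant; only $\tfrac\e2|\nabla\uu^{in}_\pm|^2$ contributes, integrating to $O(\e)$. On $R_1$, the pointwise cancellation rests on three identities: the orthogonal decomposition $|\nabla\Psi_\e|^2=|\p_s\Psi_\e|^2+|\nabla_y\Psi_\e|^2=\e^{-2}\dist_\mm^2(\phi'(s/\e))^2+|\nabla_y\Psi_\e|^2$; the ODE identity $F(\Psi_\e)=f(\dist_\mm^2\min(\phi,1-\phi)^2)=\tfrac12\dist_\mm^2(\phi'(s/\e))^2$ (using that $\Psi_\e$ lies on a minimal-pair line so $\dd_\mm(\Psi_\e)=\dist_\mm\min(\phi,1-\phi)$); and $\psi_\e(x)=G(\phi(s/\e)\dist_\mm)$ depending only on $s$, whence $\nabla_y\psi_\e=0$ and $\bxi\cdot\nabla\psi_\e=\p_s\psi_\e=\e^{-1}\dist_\mm^2(\phi'(s/\e))^2$ using $\bxi=\bnu\circ\pi_0$ on $R_1$ per \eqref{def:xi}. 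The three $O(\e^{-1})$ terms cancel exactly and the integrand reduces to $\tfrac\e2|\nabla_y\Psi_\e|^2$, integrating to $O(\e)$ by the smoothness of $v_\pm^\delta$. On $R_2$, $\uu_\e^{in}\in\mm_\pm$ by the projection, so $F=0$ and $\nabla\psi_\e=0$; the gradient contribution is controlled via Lipschitz regularity of $P_{\mm_\pm}$, boundedness of $\nabla\uu^{in}_\pm$ in $L^2$, and the exponential smallness $|\Psi_\e-v_\pm^\delta\circ\pi_0|\leq Ce^{-c\delta/\e}$, yielding again $O(\e)$.

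\textbf{Bulk estimate \eqref{u bulk} and main obstacle.} On $R_0\cup R_2$, $\uu_\e^{in}\in\mm_\pm$ so $\psi_\e=c_F\1_{\O_0^+}$ and both integrands of $B$ vanish. On $R_1$, $|\psi_\e-c_F\1_{s>0}|\leq Ce^{-c|s|/\e}$ by exponential decay of $\phi$, and $|\eta\circ\dd_{\Sigma_0}|\leq|s|$, giving
\[
B[\uu_\e^{in}|\Sigma_0]\leq C\int_{\Sigma_0}\int_{-\delta}^\delta e^{-c|s|/\e}|s|\,ds\,d\mathcal{H}^{d-1}(y)\leq C\e^2\leq C\e.
\]
The principal technical obstacle is the smoothing of the trace: starting from $\uu^{in}_\pm\in H^1(\O_0^\pm,\mm_\pm)$ the trace $v_\pm\in H^{1/2}(\Sigma_0,\mm_\pm)$ is too weak to render $\tfrac\e2\int_{R_1}|\nabla_y\Psi_\e|^2\,dx$ finite. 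The smoothed version $v_\pm^\delta$ must simultaneously be $H^1(\Sigma_0)$-regular with norm depending only on $\delta$ and $\uu^{in}_\pm$, take values in $\mm_\pm$, and satisfy the minimal-pair identity on $\Sigma_0$; this requires a nontrivial composition of tangential mollification, projection onto $\mm_\pm$, and projection onto the minimal-pair submanifold of $\mm_-\times\mm_+$, whose transversality (a consequence of the minimal-pair geometry) is what makes the last projection well-defined and the construction go through.
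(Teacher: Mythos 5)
Your construction follows the same overall strategy as the paper's: insert an $\e$-rescaled one-dimensional optimal profile along the normal of $\Sigma_0$ in a fixed-width tube and glue to the bulk data $\uu^{in}_\pm$ outside. The energy-cancellation computation on the inner tube $R_1$ and the bulk estimate for $B$ are essentially the same computations the paper performs (with the paper's $\alpha$ replaced by your affinely-rescaled $\phi$, which is an equivalent choice). The estimate \eqref{volume convergence} and the cancellation of the three $O(\e^{-1})$ terms are handled in the same spirit.

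The main structural difference is the gluing mechanism. The paper never projects back onto $\mm_\pm$ in the shell: it first pushes the bulk maps inward via a global diffeomorphism $\Psi_\delta$ of $\O_0^\pm\setminus B_\delta(\Sigma_0)$ onto $\O_0^\pm$, extends the trace constantly in the normal direction into $B_\delta(\Sigma_0)$ to obtain $\uu_0^\pm$, and then takes the single global formula $\uu_\e^{in}=\tfrac{\uu_0^++\uu_0^-}{2}+S_\e\tfrac{\uu_0^+-\uu_0^-}{\dist_\mm}$ in which the cutoff $\eta_\delta$ is embedded inside the scalar profile $S_\e$. Because $S_\e$ interpolates linearly along the line segment between $\uu_0^-(x)$ and $\uu_0^+(x)$ and becomes exactly $\pm\dist_\mm/2$ outside $B_\delta$, the formula already takes values in $\mm_\pm$ on the shell; no $P_{\mm_\pm}$ projection step is needed. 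Your convex-combination-then-project variant gives the same estimates, but at the cost of an extra Lipschitz composition to control. The paper's choice is cleaner on this point, and if you redo the calculation with its $S_\e$ you can dispense with the shell projection entirely.

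On the other hand, you have correctly identified a genuine regularity issue that the paper's proof passes over in silence. The paper's $\uu_0^\pm=\uu^{in}_\pm\circ P_{\Sigma_0}$ on $B_\delta(\Sigma_0)$ is asserted to lie in $H^1(\O_0^\pm\cup\overline{B_\delta(\Sigma_0)},\mm_\pm)$, but this requires the restriction of $\uu^{in}_\pm$ to $\Sigma_0$ to lie in $H^1(\Sigma_0)$, whereas the trace of an $H^1(\O_0^\pm)$ map is a priori only $H^{1/2}(\Sigma_0)$ (and the pointwise constraint \eqref{MC initial data} also quietly presumes continuity of the trace). Your mollify-then-project proposal is a reasonable fix, but as you yourself admit, the step that reprojects the mollified pair onto the minimal-pair submanifold of $\mm_-\times\mm_+$ is not carried out: you would need to verify that this set is a smooth submanifold with a well-defined smooth tubular-neighborhood projection, that the mollified pair lands in that neighborhood, and that the resulting composition has an $H^1(\Sigma_0)$ bound in terms of $\delta$ and $\|\uu^{in}_\pm\|_{H^1}$; none of this is immediate. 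In short: your proof and the paper's share the same essential idea and both are incomplete on the same point. You were right to flag it, but the flag does not by itself close the gap, and the extra projection step you introduce is more machinery than the paper's version requires while leaving the hard part unresolved.
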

  
 The rest of the work will be organized as follows: in Section \ref{sec pre}, we shall recall fundamental  results  that will be employed throughout  the work. These include  the compactness and closure of special function with bounded variation (cf. \cite[Chapter 4]{MR1857292}), the theory of minimal connection developed by Sternberg \cite{MR930124} and Lin--Pan--Wang \cite{Lin2012a},  the elements of differential  geometry used in the description of    interface motion, and finally  the relative entropy  method by Fischer--Laux--Simon \cite{fischer2020convergence}. In particular, 
 in Subsection \ref{sec entropy}, we shall adapt this later method to     system \eqref{Ginzburg-Landau sys}, and then  derive a differential inequality, i.e. Proposition \ref{gronwallprop}.    This proposition, when combined with  Chen--Struwe \cite{MR990191} along with   results   in Section \ref{sec level}, leads to the convergences  to  harmonic heat flows  locally away from the moving interface $\Sigma_t$. Another important consequence  of Proposition \ref{gronwallprop} is an     $L^1$-convergence rate estimate of $\psi_\e$, obtained  in Theorem \ref{thm volume convergence}. This theorem will be used 
to derive fine   estimates   of the  level sets of $\psi_\e$ in  Lemma \ref{area control}, as well as convergences of some corrections of $\uu_\e$ up to the moving interface $\Sigma_t$. All of these will be done in Section \ref{sec level}, and we shall use them to derive the minimal pair boundary condition \eqref{thm minimal pair}  in  Section \ref{sec mp}, and thus finish the proof of Theorem \ref{main thm}. Finally  we prove Theorem \ref{thm init}  in Section \ref{sec initial data}. 

We end the introductory part by introducing  some  notations and conventions that will be employed throughout this work.
Unless specified otherwise $C>0$ is  a generic constant  depending  only  on the geometry of the interface $\Sigma$ (cf. \eqref{interface}) and that of  the wells $\mm$ (cf. \eqref{mm assumption}), but not on $\e$ or $t\in [0,T]$. The value of such a constant   might change from line to line. In order to simplify the presentation, we shall sometimes  abbreviate the estimates like  $X\leq CY$ by $X\lesssim Y$ for two  non-negative quantities $X,Y$.

We provide  a list of symbols for the convenience of the readers:

  \begin{itemize}
  \item $A:B$  is the    Frobenius inner product of   two square matrices $A,B$,   defined by  $  \tr A^{\mathsf{T}} B$.
  \item $\p_i=\p_{x_i} ~(0\leq i\leq d)$ with the convention that $\p_t=\p_0$. 
  \item $\nabla f=(\p_1f, \cdots, \p_d f)$ is the (distributional)  gradient of a function $f$ with variables $x=(x_1,\cdots,x_d)$.
\item $\p W=(\p_{u_1} W,\cdots,\p_{u_n} W)$ is the   gradient of   a   function   $W=W(\uu)$.
    \item $\p \dd_F(\uu)$: the generalized gradient of   $\dd_F$ (cf. \eqref{def of linear map in chain} below).
  \item  $\p  U$: measure-theoretic boundary of a set $U$ of finite perimeter with measure-theoretic outer normal vector $\nu$.
   \item $\dist(\uu,A)$:  Euclidean distance from $\uu $ to a set $A\subset \R^n$.
   \item For two vectors $\uu,\vv\in\R^n$, $|\uu-\vv|$ is their Euclidean distance $\left|\uu-\vv\right|_{\R^n} $.
   \item $\dist_{\mm}$: the distance between $\mm_\pm$ in $\R^n$, namely  $\dist_{\mm}:=\inf_{\pp^{\pm} \in \mm_\pm}\left|\pp^{+}-\pp^{-}\right|_{\R^n} $.
  \item $\dd_\mm(\uu)$:  the distance    from $\uu\in\R^n$ to $\mm=  \mm_+\sqcup  \mm_-$ (cf.   \eqref{dN global} below).

  \item $B_\delta(U)$: the $\delta$-(tubular) neighborhood of a set $U$ in the corresponding Euclidean space. In particular, $B_\delta(x)$ is the open ball centered at $x$.
  \item 
 $\dd_\Sigma(x,t)$: signed distance from $x$ to $\Sigma_t$  (cf. \eqref{signed dist}).
     \end{itemize}


\section{Preliminaries}\label{sec pre}
 
 \subsection{Special function of bounded variation}
 
\begin{definition}
We say that $\uu \in BV(\Omega;\R^n)$ is a special function with bounded variation  and we write $\uu \in SBV(\Omega;\R^n)$, if the Cantor part of its distributional derivative $\nabla^c u$ vanishes, i.e.
\[
\nabla  \uu=\nabla^a \uu   \, \mathcal{L}^d+\left(\uu^+-\uu^-\right) \otimes \nu_\uu ~\mathcal{H}^{d-1} \mres J_\uu
\]
where $\nabla^a$ denotes the absolutely continuous part of the  distributional derivative (with respect to    Lebesgue measure $\mathcal{L}^d$) and $J_\uu$ is the jump set of $\uu$ with measure theoretical outer normal vector $\nu_\uu$.
\end{definition}
The following two results will be used to obtain convergences up to the free boundary. We refer the readers to  the monograph of Ambrosio--Fusco--Pallara \cite{MR1857292} for the proofs.
\begin{prop}\label{thmsbv}(Closure of $SBV$) Let $\varphi:[0, \infty) \rightarrow[0, \infty] $ be lower semicontinuous increasing functions and assume that
$\lim _{t \rightarrow \infty} \frac{\varphi(t)}{t}=\infty$.
Let $\Omega \subset \R^n$ be open and bounded, and let $\{\uu_k\} \subset SBV(\Omega)$ such that
\[\sup _k \int_{\Omega} \varphi\left(\left|\nabla^a \uu_k\right|\right) \,d x+\sup _k \int_{J_{\uu_k}}  \left|\uu_k^{+}-\uu_k^{-}\right| \, d \mathcal{H}^{d-1}<\infty.\label{sbvthm4.4}\]
where $\nabla^a \uu_k$ is the absolute continuous part of the distributional  gradient $\nabla \uu_k$, and $(\uu_k^{+},\uu_k^{-})$ are the approximate one-sided limits on the jump set 
$J_{\uu_k}$. If $\{\uu_k\}$ weakly-star converges in $BV(\Omega)$ to $\uu$, then the following statements hold
\begin{itemize}
\item  $\uu \in SBV(\Omega)$.
\item $\nabla^a \uu_k$ weakly converge to $\nabla^a \uu$ in $L^{1}(\Omega)$.
\item The jump part of the gradient  $\nabla^{j} \uu_k$ weakly-star converge to $\nabla^j \uu$ in $\Omega$. 
\item For any convex function $\varphi$, there holds  
\begin{align}
\int_{\Omega} \varphi(|\nabla^a \uu|) \,d x \leq   \liminf _{k \rightarrow \infty} \int_{\Omega} \varphi\left(\left|\nabla^a \uu_k\right|\right)\, d x.
\end{align}\label{LSC sbv}
\end{itemize}

 \end{prop}
\begin{prop}\label{AFP2}
(Compactness of $SBV$) Let $\varphi, \Omega$ be  as in Proposition \ref{thmsbv}. Let $\{\uu_k\}\subset SBV(\Omega)$ be satisfying \eqref{sbvthm4.4} and assume, in addition, that $\left\|\uu_k\right\|_{L^{\infty}(\Omega)}$ is uniformly bounded in $k$. Then, there exists a subsequence
\[\uu_k\xrightarrow{k\to\infty}\uu \in SBV(\Omega) \text{ weakly star in $BV(\O)$}. \] 
\end{prop}

\subsection{Minimal  connections}
We shall  briefly describe some basic properties of minimal orbits. 
For any $\pp^\pm\in \mm_\pm$, we define their minimal connection
\begin{align}\label{der mc3}
 \cc_F(\pp^+,\pp^-)&:= \inf\left\{\int_{\R}  \frac 12 |\bgamma '(t)|^2+ F (\bgamma (t))\, dt \,\Big| \,  \bgamma \in H^1_{loc}(\R;\R^n ),\bgamma (\pm \infty)=\pp^\pm\in \mm_\pm\right\}.
\end{align}


We also introduce     the centralized  potential, which is the even function
\[\tilde{F}(\lambda):= \begin{cases}f\left(\left(\tfrac {\dist_{\mm}}2 +\lambda\right)^2 \right) & \text { if } \lambda \leq 0, \\ f\left(\left(\tfrac {\dist_{\mm}}2 -\lambda\right)^2 \right) & \text { if } \lambda \geq 0,\end{cases}\label{centralized  potential}\]
and the associated scalar-valued minimal connection problem
\[c_{\tilde{F}}:=\min \left\{\int_{\R }\left(\frac 12 \left|\gamma'(s)\right|^2 +\tilde{F}(\gamma(s))\right) d t~\Big|~ \gamma \in H^1_{loc}(\R ), \gamma(\pm \infty)=\pm \tfrac {\dist_{\mm}}2 \right\}.\label{optimial mini}\] 
The following result was obtained in \cite{MR985992,MR930124}.
\begin{lemma}
 It holds that
\[c_{\tilde{F}}:=2 \int_{0}^{\tfrac{\dist_{\mm}}2} \sqrt{2\widetilde{F}(\lambda)} d \lambda \quad =c_F\(=2 \int_{0}^{\tfrac{\dist_{\mm}}2} \sqrt{2f(\lambda^2)} d \lambda\).\label{linpanwang 2.2}\]
Moreover, there exists a minimizer $\alpha(s)$  
of \eqref{optimial mini}  that   satisfies
\begin{subequations}\label{optimal profile alpha}
\begin{align}
&\alpha(s) \in C^{\infty}\left(\R ; (-\tfrac{\dist_{\mm}}2, \tfrac{\dist_{\mm}}2 )\right) \text{ is odd and strictly  increasing in }\R.\label{odd increase}\\
&-\alpha^{\prime \prime}(s)+  \tilde{F}'(\alpha(s))=0, \quad s\in \R ; \quad \alpha(\pm \infty)=\pm \tfrac {\dist_{\mm}}2 .\label{travelling wave}\\
&\alpha'(s)=\sqrt{2\tilde{F}(\alpha(s))}, \quad \forall ~ s \in \R.\label{alpha'=}\\
&\left|\alpha'(s)\right|+\left|\alpha(s)\mp\tfrac {\dist_{\mm}}2 \right| \leq C  e^{-C |s|} \quad \text { as } s \rightarrow\pm\infty.\label{exp alpha}
\end{align}
\end{subequations}
\end{lemma}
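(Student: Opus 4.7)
The plan is to treat this as the classical scalar Modica--Mortola/Allen--Cahn connection problem on the line, applied to the auxiliary even potential $\tilde{F}$. The two claims are essentially (i) a sharp AM--GM estimate that yields the explicit value of $c_{\tilde{F}}$ and identifies the equipartition condition for minimizers, and (ii) the construction of an odd, monotone heteroclinic via a first-order ODE, whose exponential decay follows from the nondegeneracy $f(s)/s\to c_4$ encoded in \eqref{bulk2}.

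For \eqref{linpanwang 2.2}, I would start from any admissible $\bgamma\in H^1(\R)$ with $\bgamma(\pm\infty)=\pm\dist_\mm/2$ and use the pointwise inequality $\tfrac12|\bgamma'|^2+\tilde{F}(\bgamma)\geq \sqrt{2\tilde{F}(\bgamma)}\,|\bgamma'|$ together with the area formula / change of variables, bounding the integral below by $\int_{-\dist_\mm/2}^{\dist_\mm/2}\sqrt{2\tilde{F}(\lambda)}\,d\lambda$; by evenness of $\tilde{F}$ this equals $2\int_0^{\dist_\mm/2}\sqrt{2f(\lambda^2)}\,d\lambda$. Equality in this chain forces $\bgamma'=\sqrt{2\tilde{F}(\bgamma)}$ and $\bgamma$ monotone, which sets up the construction of the minimizer.

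For the existence of $\alpha$, I would solve the autonomous first-order initial value problem
\[
\alpha'(s)=\sqrt{2\tilde{F}(\alpha(s))},\qquad \alpha(0)=0,
\]
noting that $\tilde{F}>0$ on $(-\dist_\mm/2,\dist_\mm/2)$ so the integral curve is smooth and strictly increasing on its maximal interval. Separation of variables gives $s=\int_0^{\alpha(s)}\frac{d\lambda}{\sqrt{2\tilde{F}(\lambda)}}$; because the integrand is bounded away from zero on compact subsets of the open interval but integrable near $\pm\dist_\mm/2$ is the opposite issue --- in fact the growth $\tilde{F}(\lambda)\sim c_4(\dist_\mm/2-\lambda)^2$ near $\dist_\mm/2$ (from $f(s)/s\to c_4$) makes the integrand of order $1/(\dist_\mm/2-\lambda)$ so the integral \emph{diverges} as $\alpha\to \dist_\mm/2$. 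Hence $\alpha$ is defined on all of $\R$ with $\alpha(\pm\infty)=\pm\dist_\mm/2$. Oddness of $\alpha$ follows because $\tilde{F}$ is even, so $s\mapsto -\alpha(-s)$ solves the same IVP and equals $\alpha$ by uniqueness. Differentiating the first-order equation (or using $\tfrac12(\alpha')^2=\tilde{F}(\alpha)$ and differentiating) gives \eqref{travelling wave}, and smoothness of $\alpha$ follows by bootstrapping since $\tilde{F}\in C^2$ and $\alpha$ stays in $(-\dist_\mm/2,\dist_\mm/2)$. That this $\alpha$ is indeed a minimizer is immediate because \eqref{alpha'=} is precisely the equality case of the AM--GM estimate above, so its action equals $c_{\tilde{F}}$.

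The only slightly delicate step is the exponential bound \eqref{exp alpha}. For $s\geq 0$ large, set $\beta(s):=\dist_\mm/2-\alpha(s)>0$. Using \eqref{bulk2} and the identity $\tilde{F}(\lambda)=f((\dist_\mm/2-\lambda)^2)$ with $\lim_{s\downarrow0} f(s)/s=c_4>0$, I get $\tilde{F}(\alpha)=c_4\beta^2(1+o(1))$ as $\beta\downarrow 0$, so the first-order ODE becomes $-\beta'=\sqrt{2c_4}\,\beta\,(1+o(1))$, which by a standard comparison yields $0<\beta(s)\leq C e^{-cs}$ for some $c>0$ and all $s\geq s_0$. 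The same bound transfers to $|\alpha'(s)|=\sqrt{2\tilde{F}(\alpha(s))}\leq C\beta(s)(1+o(1))$, and the behavior as $s\to-\infty$ follows by oddness. The identification $c_F=c_{\tilde{F}}$, implicit in the statement, is consistent with the change of variables $\uu\mapsto \dd_F(\uu)$ that motivates the centralization and was already recorded in \eqref{linpanwang cf equ}. The main technical point to watch is ensuring the asymptotic $f(s)\sim c_4 s$ is used carefully enough to produce a \emph{uniform} exponential rate near both wells; everything else is routine scalar ODE analysis.
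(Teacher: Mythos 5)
Your proposal is correct and complete; the paper states this lemma without proof, and your argument is exactly the standard scalar Modica--Mortola phase-plane analysis (AM--GM/equipartition for the lower bound, first-order ODE $\alpha'=\sqrt{2\tilde F(\alpha)}$ for the minimizer, uniqueness for oddness, nondegeneracy $c_1 s\le f(s)\le c_2 s$ near $s=0$ for the exponential decay). Two points worth tightening if you write this out: uniqueness of the IVP at $\alpha=0$ uses that $\tilde F$ is locally constant (hence smooth) near $0$ because $(\dist_\mm/2)^2>\delta_0^2$ so $f\equiv c_3$ there, which also repairs the potential lack of $C^1$-matching of the two branches of $\tilde F$ at $\lambda=0$; and for the exponential bound you don't actually need the limit $f(s)/s\to c_4$ --- the two-sided bound $c_1 s\le f(s)\le c_2 s$ already gives $-\sqrt{2c_2}\,\beta\le\beta'\le-\sqrt{2c_1}\,\beta$ and hence uniform exponential rates.
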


We need an equivalent  condition to the minimal pair one stated at \eqref{thm minimal pair}.
To this end, we introduce
\begin{subequations}\label{def Cap M}
\begin{align} 
&M^{+}:=\left\{\pp^{+} \in \mm_+\mid \exists \,\pp^{-} \in \mm_- \text { s.t. }\left|\pp^{+}-\pp^{-}\right|=\dist_{\mm}\right\}, \\
&M^{-}:=\left\{\pp^{-} \in \mm_-\mid  \exists \, \pp^{+} \in \mm_+ \text { s.t. }\left|\pp^{+}-\pp^{-}\right|=\dist_{\mm}\right\}.
\end{align}
\end{subequations}
\begin{lemma}{\cite[Theorem 2.1]{Lin2012a}}\label{lemma mc}
The function $\cc_F(\cdot,\cdot)$ defined by \eqref{der mc3} satisfies 
\[\cc_F(\pp^+,\pp^-)= c_F\qquad \forall\pp^\pm\in \mm_\pm.\label{cf small}\]
 Moreover, we have the following  equivalence for a pair $\pp^\pm\in \mm_\pm$:
\begin{subequations}\label{cf equivalence}
\begin{align}
\bullet \qquad& \cc_F(\pp^+,\pp^-)\text{ is attained by a minimal  orbit }  \bgamma.\label{rigidity cf} \\
\bullet \qquad& \pp^\pm\in M^\pm,\quad  |\pp^+-\pp^-|=\dist_{\mm}.\label{rigidity cf final}
\end{align}
\end{subequations}
Furthermore, assuming one of the above two conditions hold, 
  the corresponding minimal  orbit $\bgamma$ attaining the minimum  of $\cc_F(\pp^+,\pp^-)$  is  
\[
\bgamma(t)=\frac{\pp^++\pp^-}{2}+\alpha(t) \frac{\pp^+-\pp^-}{\dist_{\mm}}, \quad t \in \R ,\label{straightline}
\]
where $\alpha \in H^1_{loc}(\R )$  is a solution to  \eqref{optimial mini} (and equivalently to \eqref{travelling wave}).
\end{lemma}
Seemly more complicated,  the condition  \eqref{rigidity cf} is more compatible with  the variational structure of the functional \eqref{GL energy} than \eqref{rigidity cf final}.  

\subsection{Quasi-distance function}
We   define     the  distance from $\uu$ to $\mm$ according to its relative distance to its two components:
 \[\dd_{\mm}(\uu)=\left\{
\begin{split}
 \dd_{\mm_+}(\uu)&\qquad  \text{ when }   \dd_{\mm_+}(\uu)\leq \tfrac {\dist_{\mm}}2 ,\\
\dd_{\mm_-}(\uu)&\qquad  \text{ when }   \dd_{\mm_-}(\uu)\leq \tfrac {\dist_{\mm}}2.
\end{split}
\right.\label{dN global}\]
In general such a function is not $C^1$ unless $\uu$ is close to but not in $\mm$ (cf. \cite[Section 4.4]{MR2977424}). We assume that the nearest point projection $P_{\mm}:B_{\delta_0}(\mm)\mapsto \mm$ is smooth. Using this, we have 
\[\dd_{\mm}(\uu):=|\uu-P_{\mm}(\uu)|,\quad \forall ~\uu\in B_{\delta_0}(\mm).\]
Such a function is $C^1$ in $B_{\delta_0}(\mm)\backslash \mm$,  and this motivates the following unit vector field:
\begin{align}
  \bnu_{\mm} (\uu):=&\begin{cases}
\p \dd_{\mm}(\uu)=\frac{\uu-P_{\mm}(\uu)}{|\uu-P_{\mm}(\uu)|},\quad &\forall ~\uu\in B_{\delta_0}(\mm)\backslash \mm,\\
0
\quad &\forall~ \uu\in   \mm.
\end{cases}\label{normal of m}
\end{align}
Note that such a normal vector field is in general  not continuous up to $\mm$ unless $\mm_\pm$ are hypersurfaces of $\R^n$.

With these preparations,  we introduce  the quasi-distance function
\[\label{quasidistance}
  \dd_F( \uu):= 
  \left\{
\begin{split}
  \int_0^{\dd_\mm(\uu)}\sqrt{2f(\lambda^2)}\, d\lambda\quad &\text{ if }   \dd_{\mm_-}(\uu)\leq \tfrac{\dist_\mm}2,\\
   \tfrac 12 c_F \quad &\text{ if }  \dd_{\mm}(\uu)> \tfrac{\dist_\mm}2,\\
   c_F-\int_0^{\dd_\mm(\uu)}\sqrt{2f(\lambda^2)}\, d\lambda\quad & \text{ if } \dd_{\mm_+}(\uu)\leq \tfrac{\dist_\mm}2,
 \end{split}
\right.
\]
where $c_F=2 \int_{0}^{\frac {\dist_{\mm}}2} \sqrt{2f(\lambda^2)} d \lambda$ is the surface tension coefficient (cf. \eqref{linpanwang cf equ}). 
Such a  function    is a modification of the one used in   \cite{MR930124,MR985992}. We list   some of  its important properties  here.
 \begin{lemma}\label{lemma quasidis}
The function $\dd_F:\R^n\mapsto [0,c_F]$  is   Lipschitz continuous  in $\R^n$,  and  satisfies 
 \begin{align}
 \label{eq:2.7}|\p   \dd_F( \uu)| &\leq  \sqrt{2F (\uu)}\quad a.e.~\uu\in \R^n,\\
\label{eq:1.6}
   \dd_F(\uu)&=\left\{
   \begin{array}{rl}
   0\qquad\text{if and only if}&~\uu\in \mm_-,\\
    c_F\qquad \text{if and only if}&~\uu \in  \mm_+.
   \end{array}
   \right.\\
   \dd_F &\in C^1(B_{\delta_0}(\mm)),\text{ and } \p \dd_F|_{B_{\delta_0}(\mm)}(\uu)=0~ \text{if and only if }~ \uu\in \mm.\label{df is c1}
   \end{align}
Moreover,     $\frac{\p \dd_F(\uu)}{| \p \dd_F(\uu)|}$ is a continuous unit vector field in $B_{\delta_0}(\mm)\backslash \mm$  so that 
\[\frac{\p \dd_F(\uu)}{| \p \dd_F(\uu)|}= \bnu_\mm(\uu)\qquad \forall \uu \in B_{\delta_0}(\mm)\backslash \mm.\label{normalize pdf}\]

  \end{lemma}

\begin{proof}
 It is   obvious that $\dd_F$ is continuous in $\R^n$, and is Lipschitz in each subdomain where it is defined. It suffices to check the Lipschitz condition across  adjacent regions. If $\dd_{\mm}(\uu_1)> \tfrac{\dist_\mm}2$ and $\dd_{\mm_-} (\uu_2)\leq  \tfrac{\dist_\mm}2 $, then  
 \begin{align*}
0&\leq \dd_F(\uu_1)-\dd_F(\uu_2)=\int_{\dd_{\mm_-}(\uu_2)}^{\frac {\dist_\mm}2}\sqrt{2f(\lambda^2)}\, d\lambda\\
&\leq \int_{\dd_{\mm_-}(\uu_2)}^{\dd_{\mm_-}(\uu_1)}\sqrt{2f(\lambda^2)}\, d\lambda\leq C |\dd_{\mm_-}(\uu_1)-\dd_{\mm_-}(\uu_2)|\leq C|\uu_2-\uu_1|.
\end{align*}
  Other cases can be treated in a similar way. The inequality \eqref{eq:2.7} and the formula \eqref{eq:1.6} follows directly from the definition \eqref{quasidistance}.

To show $\dd_F\in C^1(B_{\delta_0}(\mm))$, it suffices to write it as a function of $\dd_\mm^2$ which is smooth up to $\mm$.  Indeed, by  \eqref{bulk2} one can verify that   
 \[h(s)=\int_0^{s^{1/2}}\sqrt{2f(\lambda^2)}\, d\lambda\in C^1[0,\infty).\label{s12 func}\] 
 As a result,   $\dd_F(\uu)=h( \dd_\mm^2)$ in $B_{\delta_0}(\mm_-)$, and a  similar argument applies to $B_{\delta_0}(\mm_+)$. The rest assertions are due to $\dd_\mm\in C^1(B_{\delta_0}(\mm)\backslash \mm)$.

\end{proof}

\subsection{Geometry of   interfaces}\label{subsection geo}
Under a local parametrization $\bphi_t(s):U\to \Sigma_t$ on  an open set $U\subset \R^{d-1}$, the MCF equation  writes
\[\p_t \bphi_t(s)=\kappa(\bphi_t(s),t) \nn(s,t), \label{csf}\]
where $\kappa$ is the mean curvature and $\nn$ is   the inward normal.
For   $\delta>0$,  the $\delta$-neighborhood of $\Sigma_t$ is  the open set \begin{equation}
B_\delta(\Sigma_t):=  \{x\in\Omega:  | \dd_\Sigma(x,t)|<\delta\}.
\end{equation}
We shall choose the $\delta_0$ (first appeared in \eqref{bulk2}) small enough so that   the nearest point projection $$P_{\Sigma}(\cdot,t): B_{4\delta_0}(\Sigma_t) \mapsto \Sigma_t$$ is smooth for any $t\in [0,T]$, and   the interface \eqref{interface} keeps  at least $4\delta_0$ distance  away  from the boundary of the domain $\p\O$.  
 Analytically we have $$P_\Sigma(x,t)  =x-\nabla \dd_\Sigma (x,t) \dd_\Sigma (x,t).$$ So for  each fixed $t\in [0,T]$,  any point $x\in B_{4\delta_0}(\Sigma_t)$ corresponds to a unique pair $(r,s)$ with    $r=\dd_\Sigma (x,t)$ and $s\in U$, and thus   the identity
$$\dd_\Sigma (\bphi_t(s)+r\nn(s,t), t)\equiv r$$ holds with   independent variables  $(r,s,t)$.  
  Differentiating this identity with respect to $r$ and $t$   leads to  the following identities: 
  \[\nabla \dd_\Sigma (x,t)= \nn(s,t),\qquad -\p_t \dd_\Sigma (x,t)=\p_t \bphi_t(s)\cdot\nn(s,t)=: V(s,t).\label{velocity}\]
    These   formulas   extend  the  inward normal vector $\nn$ and the normal velocity $V$  of $\Sigma_t$ to  $B_{4\delta_0}(\Sigma_t)$.

Now we come to the definition of   $\bxi$    in the  modulated energy  $E_\e  [\uu_\e  | \Sigma](t)$  \eqref{entropy}. This is done by  extending  the inward normal vector field $\nn$ through
\[\bxi (x,t)=\phi \( \frac{\dd_\Sigma(x,t)}{\delta_0}\)\nabla \dd_\Sigma(x,t).\label{def:xi}\]
Here   $\phi(x)\geq 0$ is  an     even, smooth function on $\R$ that    decreases for  $x\in [0,1]$, and satisfies 
\begin{equation}\begin{cases}
\phi(x)>0~&\text{ for }~|x|< 1,  \\
\phi(x)=0~&\text{ for }~|x|\geq 1, \\
1-4 x^2\leq \phi(x)\leq 1-\frac 12 x^2~&\text{ for }~|x|\leq  1/2.
\end{cases}\label{phi func control}
\end{equation}

\picdis{\begin{tikzpicture}[scale = 0.8]
\begin{axis}[axis equal,axis lines = left,
]
\addplot[domain=0: 0.9999,color=red,samples=100]{exp(x^2/(x^2-1))} node[above] {$\phi(x)$};
\addplot[domain=0:0.5,color=black]{1-4*x^2} node[above] {$1-4x^2$};
\addplot[domain=0:1,color=black]{1-0.5*x^2} node[below] {$1-\frac{x^2}2$};
\end{axis}

\end{tikzpicture}
\qquad 
\begin{tikzpicture}[scale = 1]
\begin{axis}[axis equal,
 axis lines=none,
 xtick=\empty,
 ytick=\empty,
]

  \draw (30, 100) node   {$\O_t^-$};
    
  \addplot[samples=8, domain=1.8:pi-0.3, 
	variable=\t,
	quiver={
		u={-cos(deg(t))},
		v={-sin(deg(t))},
		scale arrows=0.09},
		->,black]
	({cos(deg(t))}, {sin(deg(t))});
	  \addplot[samples=10, domain=1.8:pi-0.3, 
	variable=\t,
	quiver={
		u={-1.5*cos(deg(t))},
		v={-1.5*sin(deg(t))},
		scale arrows=0.07},
		->,black]
	({1.5*cos(deg(t))}, {1.5*sin(deg(t))}); 
	  \addplot[samples=10, domain=1.8:pi-0.3, 
	variable=\t,
	quiver={
		u={-1.25*cos(deg(t))},
		v={-1.25*sin(deg(t))},
		scale arrows=0.13},
		->,black]
	({1.25*cos(deg(t))}, {1.25*sin(deg(t))}) ;
	\addplot[samples=100, domain=1.5:pi-0.2,dashed] 
	({1.5*cos(deg(x))}, {1.5*sin(deg(x))});
  \addplot[samples=100, domain=1.5:pi-0.2, dashed,black] 
	({cos(deg(x))}, {sin(deg(x))}) ;
	\addplot[samples=100, domain=1.5:pi-0.2, very thick,red] 
	({1.25*cos(deg(x))}, {1.25*sin(deg(x))}) node[right]{$\Sigma_t$};
	  \draw (100, 45) node   {$\bxi $};
  \draw (100, 10) node   {$\O_t^+$};
\end{axis}
\end{tikzpicture}}

To fulfill these requirements, we can simply choose 
\[\phi(x)=e^{\frac 1{x^2-1}+1}~\text{for}~|x|< 1~\text{and}~\phi(x)=0~\text{for}~|x|\geq 1.\label{phi func}\]  
 We  also need to extend the  curvature   of $\Sigma_t$ to the domain  $B_{4\delta_0}(\Sigma_t)$. To this end, choose a cut-off function 
  \[\eta_0\in C_c^\infty(B_{2\delta_0}(\Sigma_t))~\text{ with }\eta_0=1\text{  in }B_{\delta_0}(\Sigma_t),\label{cut-off eta delta}\] 
and we  define 
\[ \HH(x,t)=\kappa \nabla \dd_\Sigma (x,t)  \quad\text{with}\quad \kappa(x,t)=-\Delta \dd_\Sigma  (P_\Sigma(x,t), t )\eta_0(x,t).\label{def:H}\]
By \eqref{cut-off eta delta}, $\HH$ is extended constantly in the  normal direction. So we have
 \begin{align}
 (\nn\cdot\nabla )\HH&=0\text{ and } (\bxi \cdot\nabla )\HH=0\quad \forall t\in [0,T],~ x\in B_{2\delta_0}(\Sigma_t).\label{normal H}\\
 \label{bc n and H}
   \bxi &=0 ~\text{and}~\HH=0\quad \forall t\in [0,T],~ x\in \p\O.
  \end{align}
 We end this part by  the following identities which will be employed to prove the modulated energy   inequalities:
  \begin{subequations}\label{xi der}
  \begin{align}
     \nabla\cdot \bxi +\HH \cdot \bxi &= O(\dd_\Sigma ),\label{div xi H}\\
  \p_t \dd_\Sigma (x,t) +(\HH( x,t)\cdot\nabla)  \dd_\Sigma (x,t) &=0\quad \text{in}~B_{\delta_0}(\Sigma_t),\label{mcf}\\
\p_t \bxi +\left(\HH \cdot \nabla\right) \bxi +\left(\nabla \HH\right)^{\mathsf{T}} \bxi &=0\quad \text{in}~B_{\delta_0}(\Sigma_t),\label{xi der1} \\
\p_t |\bxi |^2 +\left(\HH \cdot \nabla\right)|\bxi |^2 &=0\quad \text{in}~B_{\delta_0}(\Sigma_t),\label{xi der2}
\end{align}
  \end{subequations}
  where $\nabla \HH:=\{\p_j H_i\}_{1\leq i, j\leq d}$ is a matrix with $i$ being the row index. 
  \begin{proof}[Proof of \eqref{xi der}]
  Recalling  \eqref{def:xi},  $\phi_0(\tau):=\phi (\frac \tau{\delta_0})$   is an even function. So it follows from   $\phi_0'(0)=0$ and Taylor's expansion in $\dd_\Sigma$   that 
\begin{align*}
\nabla\cdot \bxi &=|\nabla \dd_\Sigma|^2 \phi_0'(\dd_\Sigma)+\phi_0(\dd_\Sigma)\Delta \dd_\Sigma(x,t)
\\&=O(\dd_\Sigma) +\phi_0 (\dd_\Sigma)\Delta \dd_\Sigma(P_I(x,t),t),
\end{align*}
and this together with \eqref{def:H} leads to   \eqref{div xi H}.    
Using  \eqref{velocity} and \eqref{def:H}, we can write \eqref{csf} as the  transport equation \eqref{mcf}:
$$-\p_t \dd_\Sigma=\p_t \bphi_t(s)\cdot\nn(s,t)=\kappa(\bphi_t(s),t)=\HH\cdot\nabla\dd_\Sigma\quad \text{in}~B_{\delta_0}(\Sigma_t).$$
This equation implies  that the following two  identities  hold  in $B_{\delta_0}(\Sigma_t)$:
\begin{align*}
\p_t \nabla \dd_\Sigma+(\HH \cdot \nabla) \nabla \dd_\Sigma +(\nabla \HH )^{\mathsf{T}} \nabla \dd_\Sigma=0,\\
\p_t \phi_0(\dd_\Sigma )+ (\HH  \cdot\nabla) \phi_0(\dd_\Sigma)=0.
\end{align*}
 These two equations together  imply \eqref{xi der1}. Finally \eqref{xi der2} is a consequence of \eqref{mcf}.
  \end{proof}

\subsection{Modulated energy  method}\label{sec entropy}
As the gradient flow of the Ginzburg--Landau energy \eqref{GL energy}, the system
\eqref{Ginzburg-Landau} has the following energy dissipation law
\begin{equation}\label{dissipation}
A_\e (\uu_\e  (\cdot,T))+  \int_0^T \int_\O \e  |\p_t \uu_\e  |^2 \,d x \,d t=A_\e (\uu_\e (\cdot,0)),~\text{for all}~ T> 0.
\end{equation}
For initial datum undergoing a phase transition across    the initial interface $\Sigma_0$, due to   concentrations  of $\nabla \uu_\e  $ on   $\Sigma_t$, the  dissipation law \eqref{dissipation} is not sufficient to derive quantitative  convergences of $\uu_\e$, not even away from $\Sigma_t$. Following a recent work of  Fisher--Laux-Simon  \cite{fischer2020convergence} we shall derive   an inequality which modulates  the concentrations  and   leads to    compactness of $\{\uu_\e\}$ in Sobolev spaces.

  We shall start by discussing  the differentiability of $\psi_\e(x,t)=\dd_F \circ \uu_\e (x,t)$ (cf. \eqref{psi}).
It follows from  Lemma \ref{lemma quasidis} that $\dd_F(\cdot)$ is  a  Lipschitz function in $\R^n$ with $\dd_F(0)=0$ under the     assumption that $0\in \mm^-$.  Following Laux--Simon \cite{MR3847750}, for  every $ (x,t)\in \O\times  [0,T]$, we consider   the restriction of    $\dd_F(\uu_\e(x,t))$ to  the affine space
\begin{align*}
&T^{\uu_\e}_{x,t}:=\uu_\e(x,t)+ {\rm span}\{\p_0   \uu_\e(x,t) ,\cdots,  \p_d\uu_\e(x,t)\},
\end{align*}
 denoted  by $\dd_F|_{T^{\uu_\e}_{x,t}}$. By the generalized  chain rule of Ambrosio--Dal Maso \cite{MR969514}, $\dd_F|_{T^{\uu_\e}_{x,t}}$
is differentiable at $\uu_\e(x,t)$. Now we denote the orthogonal   projection from $\R^n$ to the subspace $T^{\uu_\e}_{x,t}-\uu_\e(x,t)$  by $\Pi_{x,t}^{\uu_\e}$,  and define the generalized differential by 
\[\p \dd_F(\uu_\e):=\p \(\dd_F|_{T^{\uu_\e}_{x,t}}\)\Big|_{\uu_\e(x,t)}\circ \Pi^{\uu_\e}_{x,t}.\label{def of linear map in chain}\]   
Then we have for $0\leq i\leq d$ that 
\[\p \dd_F(\uu_\e)\cdot \p_i \uu_\e=\p \(\dd_F|_{T^{\uu_\e}_{x,t}}\)\Big|_{\uu_\e(x,t)}\cdot  \p_i \uu_\e=\p_i  (\dd_F\circ \uu_\e)\]
where the second  equality is due to the directional derivative at $\uu_\e(x,t)$ pointing to $\p_i \uu_\e(x,t)$.
This proves   the generalized chain rule  
  \begin{align}
  \label{ADM chain rule}
  \p_i\psi_\e (x,t)   =    \p_i \uu_\e   (x,t)\cdot\p\dd_F\(\uu_\e   (x,t)\)\quad \text{for } 0\leq i\leq d\text{ and a.e. } (x,t).  \end{align}
  Moreover,  the  point-wise differential inequality \eqref{eq:2.7} is valid for the  generalized differential \eqref{def of linear map in chain}:
   \[\label{eq:2.7global}|\p   \dd_F( \uu_\e)|\leq  \sqrt{2F (\uu_\e)}. \]
   Note that the above inequality, rather than its classical version  \eqref{eq:2.7},  will be used   to  prove the modulated energy inequalities.  
   
To proceed, we define  the phase-field analogues of   the normal vector and  the mean curvature  vector  respectively  by
\begin{subequations}
\begin{align}
 \nn_\e (x,t)&:=\begin{cases}
 \frac{\nabla \psi_\e }{|\nabla \psi_\e|}(x,t)&\text{ if } \nabla \psi_\e (x,t)\neq 0,\\
0& \text{ otherwise}.
 \end{cases}
\label{normal diff}\\
\HH_\e (x,t)&:=\begin{cases}
-\left(\e  \Delta \uu_\e  -\frac{1}{\e }\p  F(\uu_\e  ) \right)\cdot\frac{\nabla \uu_\e  }{\left|\nabla \uu_\e  \right|} &\text{ if } \nabla \uu_\e(x,t)\neq 0,\\
0&\text{ otherwise}.
\end{cases}
 \label{mean curvature app}
\end{align}
\end{subequations}
 Note that in \eqref{mean curvature app}, the inner product is made with  the column  vectors of $\nabla \uu_\e=(\p_1 \uu_\e,\cdots,\p_d \uu_\e)$.

Using \eqref{normal of m} and \eqref{normalize pdf} we define the  linear projection  
\[ \label{projection1}
\Pi_{\uu_\e  }  \p_i \uu_\e  :=
\left\{
\begin{split}
 \p_i \uu_\e \cdot \bnu_\mm(\uu_\e)  \bnu_\mm(\uu_\e)&~\text{ if }~  \uu_\e\in B_{\delta_0}(\mm),\\
\(\p_i \uu_\e  \cdot\frac{\p  \dd_F   (\uu_\e  )  } {|\p  \dd_F   (\uu_\e  )|}\) \frac{\p\dd_F(\uu_\e)}{|\p  \dd_F   (\uu_\e  )|} &~\text{ if }~ \uu_\e\notin B_{\delta_0}(\mm)\text{ and } \p\dd_F(\uu_\e)  \neq 0,\\
0 &~ \text{ if }~ \uu_\e\notin B_{\delta_0}(\mm) \text{ and } \p\dd_F(\uu_\e)  = 0
\end{split}
\right.
\]
where $\p \dd_F$ is interpreted as the generalized differential \eqref{def of linear map in chain} in case $\dd_F$ is not classically  differentiable at $\uu_\e$. 
\begin{lemma}
The following two identities hold:
\begin{align}
\label{projectionnorm}
 |\nabla \psi_\e | &= |\Pi_{\uu_\e  } \nabla \uu_\e  | |\p  \dd_F   (\uu_\e  )|  &\text{ a.e. } (x,t),\\
  \label{projection}
\Pi_{\uu_\e  } \nabla \uu_\e  &=\frac{|\nabla\psi_\e |} {|\p  \dd_F   (\uu_\e  )|^2}\p  \dd_F   (\uu_\e  )\otimes \nn_\e    &\text{ if } \quad\p \dd_F   (\uu_\e  )\neq 0,
\end{align}
where in \eqref{projection} the projection applies to each column vector  of  $\nabla \uu_\e$.
\end{lemma}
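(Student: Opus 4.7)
The plan is to reduce both identities to a single computation by exploiting the fact that the definition of $\Pi_{\uu_\e}$, although stated piecewise, is really just the orthogonal projection onto the one-dimensional subspace spanned by $\p\dd_F(\uu_\e)$ whenever the latter is nonzero. Indeed, when $\uu_\e\in B_{\delta_0}(\mm)$, the normalization identity \eqref{normalize pdf} from Lemma \ref{lemma quasidis} tells us that $\p\dd_\mm(\uu_\e)$ is nothing but the unit vector $\p\dd_F(\uu_\e)/|\p\dd_F(\uu_\e)|$, so the two branches in \eqref{projection1} agree on their overlap. Thus the first step is to observe that for every $(x,t)$ with $\p\dd_F(\uu_\e)\neq 0$ one may uniformly write
\[
\Pi_{\uu_\e}\p_i\uu_\e=\left(\p_i\uu_\e\cdot\frac{\p\dd_F(\uu_\e)}{|\p\dd_F(\uu_\e)|}\right)\frac{\p\dd_F(\uu_\e)}{|\p\dd_F(\uu_\e)|}.
\]

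Next I would invoke the generalized chain rule \eqref{ADM chain rule} to replace the inner product $\p_i\uu_\e\cdot\p\dd_F(\uu_\e)$ by $\p_i\psi_\e$, which yields
\[
\Pi_{\uu_\e}\p_i\uu_\e=\frac{\p_i\psi_\e}{|\p\dd_F(\uu_\e)|^2}\,\p\dd_F(\uu_\e).
\]
Stacking over $i=1,\dots,d$ and recognizing $\nabla\psi_\e=|\nabla\psi_\e|\nn_\e$ from the definition \eqref{normal diff} yields the tensor identity \eqref{projection}. Taking the Frobenius norm on both sides, and using that $\nn_\e$ is a unit vector (on the set $\{\nabla\psi_\e\neq 0\}$), gives immediately
\[
|\Pi_{\uu_\e}\nabla\uu_\e|=\frac{|\nabla\psi_\e|}{|\p\dd_F(\uu_\e)|},
\]
which is \eqref{projectionnorm} after clearing denominators.

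Finally, I would dispose of the degenerate case $\p\dd_F(\uu_\e)=0$: by definition \eqref{projection1} the left-hand side $\Pi_{\uu_\e}\nabla\uu_\e$ vanishes, while from the chain rule \eqref{ADM chain rule} we have $|\p_i\psi_\e|\le|\p_i\uu_\e|\,|\p\dd_F(\uu_\e)|=0$, so $|\nabla\psi_\e|=0$ as well and \eqref{projectionnorm} holds trivially (identity \eqref{projection} is vacuous in this case since its hypothesis fails). The only subtlety worth checking is that on the intersection $B_{\delta_0}(\mm)\cap\{\p\dd_F(\uu_\e)\neq 0\}$ the two branches of \eqref{projection1} genuinely produce the same projector, which is precisely what \eqref{normalize pdf} guarantees; this consistency check is the one place where we use nontrivial information from Lemma \ref{lemma quasidis}, and it is the only conceptual step — the rest is direct computation from the chain rule.
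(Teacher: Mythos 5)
Your proof uses the same key ingredients as the paper (the chain rule \eqref{ADM chain rule} and the normalization identity \eqref{normalize pdf}) and reaches the same conclusions, organized slightly more cleanly: you unify the two nondegenerate branches of \eqref{projection1} into a single projector formula, derive the tensor identity \eqref{projection} first, and obtain \eqref{projectionnorm} by taking Frobenius norms, whereas the paper proves \eqref{projectionnorm} case by case first and then derives \eqref{projection} from it. Both routes are correct.

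One small inaccuracy worth flagging in your treatment of the degenerate case $\p\dd_F(\uu_\e)=0$: the claim that $\Pi_{\uu_\e}\nabla\uu_\e$ vanishes ``by definition \eqref{projection1}'' is not quite right. The third branch of \eqref{projection1} (which sets the projection to zero) only applies when $\uu_\e\notin B_{\delta_0}(\mm)$. Inside $B_{\delta_0}(\mm)$ the gradient $\p\dd_F$ vanishes precisely on $\mm$ itself, since $|\p\dd_F|=\sqrt{2f(\dd_\mm^2)}$ there and $f(s)>0$ for $s>0$ by \eqref{bulk2}; and at points of $\mm$ it is the \emph{first} branch that applies, giving $\Pi_{\uu_\e}\p_i\uu_\e=\big(\p_i\uu_\e\cdot\p\dd_\mm(\uu_\e)\big)\p\dd_\mm(\uu_\e)$, which need not vanish. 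This does not undermine your conclusion: \eqref{projection} is vacuous at such points since its hypothesis fails, and \eqref{projectionnorm} still holds with both sides zero — the left because the chain rule gives $\nabla\psi_\e=0$, and the right because of the factor $|\p\dd_F(\uu_\e)|=0$, regardless of whether $\Pi_{\uu_\e}\nabla\uu_\e$ vanishes. So the argument goes through, just not for the reason you state for the projection.
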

\begin{proof}

Concerning \eqref{projectionnorm}, we distinguish two cases:

(a) When $\uu_\e\in B_{\delta_0}(\mm)$, then according to \eqref{df is c1}, $\dd_F$ is $C^1$ and the generalized differential \eqref{def of linear map in chain} coincide with the classical one. 
Thus on the set $\{x\mid \uu_\e(x,t)\in B_{\delta_0}(\mm)\backslash \mm\}$, we have 
\begin{align}
  \p_i\psi_\e   \overset{\eqref{ADM chain rule}}=    \p_i \uu_\e   \cdot \frac{\p\dd_F\(\uu_\e \)}{|\p\dd_F\(\uu_\e \)|}|\p\dd_F\(\uu_\e\)|    \overset{\eqref{normalize pdf}}= \p_i \uu_\e   \cdot \bnu_\mm(\uu_\e)|\p\dd_F\(\uu_\e\)|.
  \end{align}
This together with  the first case of \eqref{projection1} leads to \eqref{projectionnorm}. On the set $\{x\mid \uu_\e(x,t)\in  \mm\}$,  we have   from \eqref{df is c1} that $\p \dd_F(\uu_\e)=0$.  By \eqref{ADM chain rule}, both sides of \eqref{projectionnorm} vanishes.

(b) When $\uu_\e\notin B_{\delta_0}(\mm)$: if   $\p\dd_F(\uu_\e)  \neq 0$, then owning to \eqref{ADM chain rule} and the second  case in \eqref{projection1}, we obtain   \eqref{projectionnorm}; if    $\p\dd_F(\uu_\e)  = 0$, then by \eqref{ADM chain rule}, we have $\nabla \psi_\e=0$ too. This finishes the proof of \eqref{projectionnorm}.

Now we turn to the proof of  the formula    \eqref{projection} under the assumption  that $\p \dd_F   (\uu_\e  )\neq 0$. It holds when $\nabla\psi_\e=0$ because   \eqref{projectionnorm} then implies $\Pi_{\uu_\e  } \nabla \uu_\e=0$. When $\nabla\psi_\e\neq 0$, then 
\begin{align}\label{chainlast1}
\frac{|\nabla\psi_\e |} {|\p  \dd_F   (\uu_\e  )|^2}\p  \dd_F   (\uu_\e  )\otimes \nn_\e\overset{
 \eqref{normal diff}}=\frac{\p  \dd_F   (\uu_\e  )  } {|\p  \dd_F   (\uu_\e  )|^2}\otimes \nabla\psi_\e \nonumber\\\overset{\eqref{ADM chain rule}}=  \frac{\p  \dd_F   (\uu_\e  )  } {|\p  \dd_F   (\uu_\e  )|^2}\otimes \(\nabla \uu_\e\cdot\p\dd_F(\uu_\e)\).
\end{align}

We distinguish two cases:

(1) When $\uu_\e\notin B_{\delta_0}(\mm)$, the last term of \eqref{chainlast1}  is exactly the second case defining   $\Pi_{\uu_\e  } \nabla \uu_\e$ \eqref{projection1}. 

(2) When $\uu_\e\in  B_{\delta_0}(\mm)$, by \eqref{df is c1} we have $\uu_\e\notin \mm$ (under the assumption $\p \dd_F   (\uu_\e  )\neq 0$). So  by \eqref{normalize pdf}, the last term of \eqref{chainlast1} simplifies to 
$\big(\nabla \uu_\e \cdot \bnu_\mm(\uu_\e)\big) \otimes \bnu_\mm(\uu_\e)$. This coincides  with the first  case of   \eqref{projection1} defining $\Pi_{\uu_\e  }  \nabla \uu_\e$.
This finishes the proof of  \eqref{projection}.
\end{proof}

	As we shall not integrate the time variable $t$ throughout this section,   we shall abbreviate the spatial integration $\int_\O$ by $\int$ and sometimes we omit the $\,dx$.
The following lemma gives various coercivity estimates of  $E_\e  [\uu_\e  | \Sigma]$   \eqref{entropy}. It   was due to \cite{MR4284534}, generalizing the one by \cite{fischer2020convergence} to   vectorial cases. We present the proof for the convenience of the readers.
\begin{lemma}\label{lemma:energy bound}
There exists a universal constant $C>0$ which is independent of $t\in [0,T]$ and $\e $  such that the following estimates hold  for  every $t\in [0,T]$:
\begin{subequations} \label{energy bound}
\begin{align}
 \int \(\frac{\e }{2} \left|\nabla \uu_\e  \right|^2+\frac{1}{\e } F  (\uu_\e  )-|\nabla \psi_\e | \) \, d x & \leq E_\e  [ \uu_\e   | \Sigma ] , \label{energy bound-1}\\
 \e  \int   \left|\nabla \uu_\e  -\Pi_{\uu_\e  }\nabla \uu_\e  \right|^2    \, d x & \leq 2   E_\e  [ \uu_\e   | \Sigma ] ,\label{energy bound0}\\
  \int\left(\sqrt{\e }\left|\Pi_{\uu_\e  }\nabla \uu_\e  \right|-\frac1{\sqrt{\e }} \left|\p   \dd_F   (\uu_\e  )\right| \right)^2\, d x & \leq  2    E_\e  [ \uu_\e   | \Sigma ] ,\label{energy bound2}\\
 \int\( {\frac{\e }{2}}\left| \nabla \uu_\e  \right|^2 +\frac{1}{\e } F  (\uu_\e  )+\left|\nabla \psi_\e\right|\)\left(1-\bxi  \cdot\nn_\e\right) \, d x & \leq C E_\e  [ \uu_\e   | \Sigma ] ,\label{energy bound1}
\\
   \int \(\frac{\e }2 \left|\nabla \uu_\e  \right| ^2 +\frac{1}{\e } F  (\uu_\e  )+|\nabla\psi_\e |\) \min\(\dd_\Sigma^2,1\)\, d x  & \leq C E_\e  [ \uu_\e   | \Sigma ].
\label{energy bound3}
 \end{align}
\end{subequations}
\end{lemma}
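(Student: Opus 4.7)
The plan is to deduce all five estimates from a single pointwise lower bound on the Ginzburg--Landau energy density. First I would split $\nabla \uu_\e = \Pi_{\uu_\e}\nabla \uu_\e + (\nabla\uu_\e - \Pi_{\uu_\e}\nabla \uu_\e)$ orthogonally so that
\[
\tfrac{\e}{2}|\nabla \uu_\e|^2 + \tfrac{1}{\e}F(\uu_\e) = \tfrac{\e}{2}|\nabla \uu_\e - \Pi_{\uu_\e}\nabla \uu_\e|^2 + \tfrac{\e}{2}|\Pi_{\uu_\e}\nabla \uu_\e|^2 + \tfrac{1}{\e}F(\uu_\e).
\]
Using the crucial inequality $|\p\dd_F(\uu_\e)| \leq \sqrt{2F(\uu_\e)}$ from \eqref{eq:2.7global} to replace $\frac{1}{\e}F(\uu_\e)$ by $\frac{1}{2\e}|\p\dd_F(\uu_\e)|^2$ in the last two terms and completing the square gives
\[
\tfrac{\e}{2}|\Pi_{\uu_\e}\nabla \uu_\e|^2 + \tfrac{1}{\e}F(\uu_\e) \geq \tfrac{1}{2}\Big(\sqrt{\e}|\Pi_{\uu_\e}\nabla \uu_\e| - \tfrac{1}{\sqrt{\e}}|\p\dd_F(\uu_\e)|\Big)^2 + |\Pi_{\uu_\e}\nabla \uu_\e|\,|\p\dd_F(\uu_\e)|,
\]
and the last term equals $|\nabla \psi_\e|$ by the projection identity \eqref{projectionnorm}.

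Inserting this into the definition of $E_\e$ and writing $\bxi\cdot\nabla\psi_\e = |\nabla\psi_\e|\,\bxi\cdot\nn_\e$ via \eqref{normal diff} produces the master inequality
\[
E_\e[\uu_\e|\Sigma] \geq \int \Big(\tfrac{\e}{2}|\nabla \uu_\e - \Pi_{\uu_\e}\nabla \uu_\e|^2 + \tfrac{1}{2}\big(\sqrt{\e}|\Pi_{\uu_\e}\nabla \uu_\e| - \tfrac{1}{\sqrt{\e}}|\p\dd_F(\uu_\e)|\big)^2 + |\nabla \psi_\e|(1-\bxi\cdot\nn_\e)\Big)\,dx.
\]
All three integrands are non-negative since $|\bxi|\leq 1$. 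Dropping two of the three at a time yields \eqref{energy bound-1} (combined with $|\bxi\cdot\nabla\psi_\e|\leq|\nabla\psi_\e|$), \eqref{energy bound0}, \eqref{energy bound2}, and the $|\nabla\psi_\e|$-part of \eqref{energy bound1}.

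For the remaining energy-density part of \eqref{energy bound1}, I would decompose
\[
\Big(\tfrac{\e}{2}|\nabla\uu_\e|^2 + \tfrac{1}{\e}F(\uu_\e)\Big)(1 - \bxi\cdot\nn_\e) = \Big(\tfrac{\e}{2}|\nabla\uu_\e|^2 + \tfrac{1}{\e}F(\uu_\e) - |\nabla\psi_\e|\Big)(1-\bxi\cdot\nn_\e) + |\nabla\psi_\e|(1-\bxi\cdot\nn_\e),
\]
bounding the first factor of the first term by $E_\e$ (using \eqref{energy bound-1}) and the second factor by $2$. For \eqref{energy bound3}, the key observation is that the profile $\phi$ chosen via \eqref{phi func control} satisfies $1 - |\bxi(x,t)| \gtrsim \min(\dd_\Sigma^2(x,t),1)$ uniformly in $\O$, since $1 - \phi(s) \geq \frac{1}{2}s^2$ on $|s|\leq 1/2$ and $\phi$ is supported in $[-1,1]$; combined with $1-\bxi\cdot\nn_\e \geq 1 - |\bxi|$, this converts \eqref{energy bound1} and \eqref{energy bound-1} into the weighted estimate \eqref{energy bound3}.

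The only non-routine step is ensuring that the pointwise manipulations with $\p\dd_F(\uu_\e)$ and $\Pi_{\uu_\e}$ remain valid in the degenerate regions where $\uu_\e$ exits $B_{\delta_0}(\mm)$ or where $\p\dd_F(\uu_\e)$ vanishes; I expect this to be the main obstacle but note that the three-case definition of $\Pi_{\uu_\e}$ in \eqref{projection1} and the generalized chain rule \eqref{ADM chain rule}, already established in the preceding lemma, are engineered precisely so that \eqref{projectionnorm} holds everywhere, thereby making the master inequality valid a.e.\ without further case distinction.
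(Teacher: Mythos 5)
Your proof is correct and follows essentially the same decomposition as the paper's: split $\nabla\uu_\e$ orthogonally via $\Pi_{\uu_\e}$, use \eqref{eq:2.7global} and complete the square (which the paper leaves implicit, citing \eqref{projectionnorm} and \eqref{gougudingli}), and then handle \eqref{energy bound1} via $1-\bxi\cdot\nn_\e\leq 2$ and \eqref{energy bound3} via the lower bound from \eqref{phi func control}, exactly as in the paper's \eqref{lowerbdcali}. The one point worth stating more carefully is that $\bxi\cdot\nn_\e\leq|\bxi|$ (hence $1-\bxi\cdot\nn_\e\geq 1-|\bxi|=1-\phi(\dd_\Sigma/\delta_0)$) requires $|\nn_\e|\leq 1$, which holds by \eqref{normal diff}; otherwise the argument is complete.
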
 
\begin{proof}
Using \eqref{normal diff}, we obtain $\nabla\psi_\e=|\nabla\psi_\e|\nn_\e$. Note also that \eqref{projection1}  implies 
 \[\left|\nabla \uu_\e  -\Pi_{\uu_\e  }\nabla \uu_\e  \right|^2+\left| \Pi_{\uu_\e  }\nabla \uu_\e  \right|^2=\left|\nabla \uu_\e    \right|^2.\label{gougudingli}\] 
 Altogether,   we  can write
 \begin{align*}
 &\frac \e  2\left|  \nabla \uu_\e  \right|^2     +\frac{1}{\e } F  (\uu_\e  )-\bxi\cdot\nabla \psi_\e\nonumber\\
 = &   ~    \frac \e  2\left|  \nabla \uu_\e  \right|^2     +\frac{1}{\e } F  (\uu_\e  )-|\nabla \psi_\e | +   |\nabla\psi_\e | (1-\bxi \cdot\nn_\e )\nonumber\\
= & ~ \frac{\e }2   \left|\nabla \uu_\e  -\Pi_{\uu_\e  }\nabla \uu_\e  \right|^2  +   |\nabla\psi_\e | (1-\bxi \cdot\nn_\e )  \nonumber \\
&~ + \frac \e  2\left| \Pi_{\uu_\e  }\nabla \uu_\e  \right|^2     +\frac{1}{\e } F  (\uu_\e  )-|\nabla \psi_\e |.
\end{align*}
This combined with   \eqref{eq:2.7global} and \eqref{projectionnorm} yields
 \begin{align}
 &\frac \e  2\left|  \nabla \uu_\e  \right|^2     +\frac{1}{\e } F  (\uu_\e  )-\bxi\cdot\nabla \psi_\e\nonumber\\
 \geq  &~  \frac{\e }2   \left|\nabla \uu_\e  -\Pi_{\uu_\e  }\nabla \uu_\e  \right|^2  +   |\nabla\psi_\e | (1-\bxi \cdot\nn_\e )  \nonumber \\
&~ + \frac 12 \left(\sqrt{\e }\left|\Pi_{\uu_\e  }\nabla \uu_\e  \right|-\frac1{\sqrt{\e }} \left|\p   \dd_F   (\uu_\e  )\right| \right)^2.\label{pointwise GL}
\end{align}
This  inequality  implies   
\eqref{energy bound-1}, \eqref{energy bound0} and \eqref{energy bound2}.

 Combining \eqref{energy bound-1} with  
$E_\e  [ \uu_\e   | \Sigma ]\geq   \int\left(1-\bxi  \cdot\nn_\e\right)\left|\nabla \psi_\e\right|$
  and $1-\bxi  \cdot\nn_\e\leq 2$  yields \eqref{energy bound1}.
Finally, by \eqref{phi func control} and $\delta_0\in (0,1)$ we have 
\[1-\bxi  \cdot\nn_\e \geq 1-\phi\(\frac {\dd_\Sigma}{\delta_0}\)  \geq \min \(\frac {\dd_\Sigma^2}{2\delta_0^2}, 1-\phi(\tfrac 1 2)\)\geq  C_{\phi,\delta_0} \min(\dd_\Sigma^2,1).\label{lowerbdcali}\] 
This together with  \eqref{energy bound1} implies  \eqref{energy bound3}.
\end{proof}

The following result was first proved in \cite{fischer2020convergence} in  the scalar case, and was   generalized to a matrix-valued  model   in \cite{MR4284534}. 
We present the proof in  Appendix \ref{appendix} for the convenience of the readers.
\begin{prop}\label{gronwallprop}
	There exists a constant $C=C(\Sigma)$ independent of  $\e$ such that
	\begin{align}
	\frac{d}{d t} E_\e  [ \uu_\e  | \Sigma] &+\frac 1{2\e }\int \(\e ^2 | \p_t \uu_\e   |^2-|\HH_\e |^2\)\,dx+\frac 1{2\e }\int \Big| \HH_\e -\e   |\nabla \uu_\e |\HH \Big|^2\,dx  \nonumber \\
	&+\frac 1{2\e }\int \Big| \e \p_t \uu_\e   -(\nabla\cdot  \bxi )\p \dd_F (\uu_\e )    \Big|^2\,dx  \leq CE_\e  [ \uu_\e  | \Sigma]. \label{gronwall}
	\end{align}
\end{prop}
 
\section{Estimates  of  level sets}\label{sec level}
The main task of this section is to derive the convergence rate estimate \eqref{volume convergencethm} and use it to obtain fine estimates of the level sets of $\psi_\e$. We start with a corollary of  Proposition \ref{gronwallprop}. 
\begin{lemma}\label{lemma level}
There exists a universal constant $C=C( \Sigma)$ such that   
\begin{subequations}
 \begin{align}
 &\sup_{t\in [0,T]} \int_\O \( \frac{\e}2 \left|\nabla \uu_\e \right| ^2 +\frac1 {\e}{F (\uu_\e )}-\bxi\cdot \nabla \psi_\e  \)\, d x\leq C\e,\label{calibration est2}\\
 \label{energy bound4}
&\sup_{t\in  [0,T]} \int_\O \(  \left|\nabla \uu_\e -\Pi_{\uu_\e }\nabla \uu_\e \right|^2   \)\, dx+ \int_0^T\int_\O \(  \left|\p_t \uu_\e -\Pi_{\uu_\e }\p_t \uu_\e \right|^2   \)\, dxdt\leq C,\\
 &\sup_{t\in [0,T]} A_\e(\uu_\e(\cdot,t)) +   \sup_{t\in [0,T]} \|\nabla\psi_\e(\cdot, t)\|_{L^1(\O) } \leq   C,\label{nablapsiest}\\
&\sup_{t\in [0,T]} \int_\O   \(|\nabla \psi_\e|-\bxi\cdot \nabla \psi_\e  \) \, d x\leq C\e\label{calibration est1}.
 \end{align}
\end{subequations}
 Moreover,  for any fixed $\delta\in (0, \delta_0)$, there holds
 \begin{subequations}\label{est away}
 \begin{align}\label{space der bound local}
\sup_{t\in  [0,T]}\int_{\O^\pm_t\backslash B_\delta(\Sigma_t)}\(\frac 12 |\nabla \uu_\e |^2+\frac1{\e^2}F(\uu_\e )\)\, dx \leq \delta^{-2}C,\\
\int_0^T\int_{\O^\pm_t\backslash B_\delta(\Sigma_t)} |\p_t \uu_\e |^2\, dx dt\leq \delta^{-2}C.\label{time der bound local}
\end{align}
 \end{subequations}
 \end{lemma}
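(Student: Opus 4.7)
The plan is to derive (a)--(d) from Proposition \ref{gronwallprop} and Lemma \ref{lemma:energy bound} in a fairly direct way, then deduce (e)--(f) as a localization combined with an orthogonal decomposition of $\p_t \uu_\e$. The whole argument is essentially an ``integrate-the-Gronwall-inequality'' exercise; the subtle point is extracting the parabolic $L^2$-bound on $\p_t\uu_\e$ away from $\Sigma_t$.

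\textbf{Step 1: Estimate \eqref{calibration est2}.} From \eqref{Ginzburg-Landau} one has $\e\p_t \uu_\e = \e\Delta \uu_\e - \e^{-1}\p F(\uu_\e)$, and applying Cauchy--Schwarz in each $i$ in the definition \eqref{mean curvature app} gives $|\HH_\e|\le \e|\p_t\uu_\e|$, so $\e^2|\p_t\uu_\e|^2-|\HH_\e|^2\ge 0$. The remaining terms on the LHS of \eqref{gronwall} are manifest squares, hence non-negative. Dropping them and applying Gronwall's inequality together with the initial bound \eqref{initial} yields $E_\e(t) \le e^{Ct}E_\e(0) \le Ce^{CT}\e$, which is \eqref{calibration est2}.

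\textbf{Step 2: Estimates \eqref{energy bound4} and \eqref{calibration est1}.} The spatial part of \eqref{energy bound4} is immediate from \eqref{energy bound0} divided by $\e$, combined with \eqref{calibration est2}. For the time-integrated part, integrating the third non-negative dissipation term in \eqref{gronwall} over $(0,T)$ gives
$$\int_0^T\!\!\int \bigl|\e\p_t\uu_\e - (\nabla\cdot \bxi)\p\dd_F(\uu_\e)\bigr|^2 \,dx\,dt \le 2\e E_\e(0)+2C\e\int_0^T E_\e(t)\,dt\le C\e^2.$$
By \eqref{projection1} and \eqref{normalize pdf}, $(\nabla\cdot \bxi)\p \dd_F(\uu_\e)$ lies in the $1$-dimensional image of $\Pi_{\uu_\e}$, whereas $\e(\p_t \uu_\e - \Pi_{\uu_\e}\p_t \uu_\e)$ lies in its orthogonal complement. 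Pythagoras yields $\e^2|\p_t \uu_\e - \Pi_{\uu_\e}\p_t\uu_\e|^2 \le |\e\p_t\uu_\e - (\nabla\cdot \bxi)\p\dd_F(\uu_\e)|^2$, and dividing by $\e^2$ finishes \eqref{energy bound4}. Estimate \eqref{calibration est1} is \eqref{energy bound1} rewritten as $|\nabla\psi_\e|(1-\bxi\cdot\nn_\e)=|\nabla\psi_\e|-\bxi\cdot\nabla\psi_\e$, combined with \eqref{calibration est2}.

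\textbf{Step 3: Estimate \eqref{nablapsiest}.} Since $A_\e = E_\e + \int \bxi\cdot\nabla\psi_\e\,dx$, integration by parts with the boundary condition $\bxi|_{\p\Omega}=0$ from \eqref{bc n and H} gives $A_\e = E_\e - \int(\nabla\cdot\bxi)\psi_\e\,dx$. By Lemma \ref{lemma quasidis}, $\dd_F$ is globally Lipschitz, and together with the uniform $L^\infty$-bound \eqref{L infinity bound1} this yields $\|\psi_\e\|_\infty\le C$; hence $|\int(\nabla\cdot\bxi)\psi_\e\,dx|\le C$. Combined with \eqref{calibration est2}, $A_\e(t)\le C$ uniformly. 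Finally, the pointwise Young inequality together with \eqref{ADM chain rule}, \eqref{eq:2.7global} gives $|\nabla\psi_\e|\le\sqrt{2F(\uu_\e)}|\nabla\uu_\e|\le\tfrac{\e}2|\nabla\uu_\e|^2+\tfrac{F(\uu_\e)}{\e}$, whose integral is exactly $A_\e$; thus $\|\nabla\psi_\e\|_{L^1}\le A_\e\le C$.

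\textbf{Step 4: Estimates \eqref{space der bound local} and \eqref{time der bound local}.} For $\delta\in(0,\delta_I)$, which we may take $\le 1$, we have $\min(\dd_\Sigma^2,1)\ge \delta^2$ on $\Omega^\pm_t\setminus B_\delta(\Sigma_t)$. Applying \eqref{energy bound3} and \eqref{calibration est2} gives $\delta^2\int_{\Omega^\pm_t\setminus B_\delta(\Sigma_t)}(\tfrac\e2|\nabla\uu_\e|^2+\tfrac{F(\uu_\e)}\e)\,dx\le C\e$, and dividing by $\e$ yields \eqref{space der bound local}. For \eqref{time der bound local}, decompose $|\p_t\uu_\e|^2\le 2|\p_t\uu_\e-\Pi_{\uu_\e}\p_t\uu_\e|^2+2|\Pi_{\uu_\e}\p_t\uu_\e|^2$; the first summand is in $L^2_{x,t}(\Omega\times(0,T))$ by Step 2. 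For the second, the triangle inequality combined with the Gronwall estimate of Step 2 gives $\e^2|\Pi_{\uu_\e}\p_t\uu_\e|^2\le 2|\e\p_t\uu_\e-(\nabla\cdot\bxi)\p\dd_F(\uu_\e)|^2+2\|\nabla\cdot\bxi\|_\infty^2|\p\dd_F(\uu_\e)|^2$; the first term integrates to $\le C\e^2$, while by \eqref{eq:2.7global} and \eqref{space der bound local}, $\int_0^T\!\int_{\Omega^\pm_t\setminus B_\delta(\Sigma_t)} |\p\dd_F(\uu_\e)|^2\,dx\,dt\le 2\int_0^T\!\int_{\Omega^\pm_t\setminus B_\delta(\Sigma_t)} F(\uu_\e)\,dx\,dt \le \delta^{-2}CT\e^2$. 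Dividing by $\e^2$ closes the argument. The only nontrivial ingredient is the Pythagorean split used in Steps 2 and 4, which requires recognizing that $(\nabla\cdot\bxi)\p\dd_F(\uu_\e)$ is parallel to the $1$-D range of $\Pi_{\uu_\e}$; once accepted, every bound reduces to Gronwall, the coercivity estimates of Lemma \ref{lemma:energy bound}, and localization via \eqref{energy bound3}.
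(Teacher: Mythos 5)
Your proof is essentially correct, but it takes a genuinely different route than the paper at two points; the paper's route is slightly more streamlined, so the comparison is worth noting.

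For \eqref{calibration est2}, the paper does not merely observe $|\HH_\e|\le\e|\p_t\uu_\e|$ (your Step 1). Instead it expands the \emph{sum} of the two $\HH_\e$-terms in \eqref{gronwall}, namely $\e^2|\p_t\uu_\e|^2-|\HH_\e|^2+|\HH_\e-\e|\nabla\uu_\e|\HH|^2$, and after cancellation and Cauchy--Schwarz obtains the sharper lower bound $\e^2|\p_t\uu_\e+(\HH\cdot\nabla)\uu_\e|^2$. Integrating this in time produces the parabolic estimate
\begin{equation*}
\int_0^T\!\!\int_\O|\p_t\uu_\e+(\HH\cdot\nabla)\uu_\e|^2\,dx\,dt\le C,
\end{equation*}
which the paper then uses directly together with \eqref{space der bound local} to get \eqref{time der bound local}: pointwise, $|\p_t\uu_\e|\lesssim|\p_t\uu_\e+(\HH\cdot\nabla)\uu_\e|+|\nabla\uu_\e|$. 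Your Step 1 only proves the weaker non-negativity $\e^2|\p_t\uu_\e|^2\ge|\HH_\e|^2$, so you have to reconstruct the $L^2_{t,x}$-control on $\p_t\uu_\e$ away from $\Sigma_t$ by a different argument. Your Step 4 does this correctly: you split $\p_t\uu_\e$ via $\Pi_{\uu_\e}$, control the orthogonal part from the Gronwall dissipation term $\int_0^T\!\int|\e\p_t\uu_\e-(\nabla\cdot\bxi)\p\dd_F(\uu_\e)|^2\lesssim\e^2$, and control the parallel part by peeling off $(\nabla\cdot\bxi)\p\dd_F$ and using $|\p\dd_F|^2\le 2F$ together with \eqref{space der bound local}. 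This is a valid alternative that avoids \eqref{time est1}, at the price of one extra Pythagorean decomposition.

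For \eqref{nablapsiest}, you write $A_\e=E_\e+\int\bxi\cdot\nabla\psi_\e$, integrate by parts, and use $\|\psi_\e\|_\infty\le c_F$ to bound $|\int(\nabla\cdot\bxi)\psi_\e|$; combined with $E_\e\le C\e$ this gives the uniform bound on $A_\e$, and Young's inequality with \eqref{ADM chain rule} and \eqref{eq:2.7global} then gives $\|\nabla\psi_\e\|_{L^1}\le A_\e$. The paper instead invokes the dissipation law \eqref{dissipation} to reduce to $A_\e(\uu_\e(\cdot,0))$ and then bounds $\|\nabla\psi_\e\|_{L^1}$ by $A_\e$ pointwise; it does not spell out why $A_\e(\uu_\e(\cdot,0))\le C$, which requires essentially your observation applied at $t=0$. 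Your version is therefore a bit more self-contained on this point.

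The remaining steps — using \eqref{energy bound0} for the spatial part of \eqref{energy bound4}, the orthogonal split in the dissipation term for the temporal part, \eqref{energy bound1} for \eqref{calibration est1}, and \eqref{energy bound3} for \eqref{space der bound local} — match the paper's argument.
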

\begin{proof}
To prove \eqref{calibration est2},  we need to show that  the first integral on the left-hand side of \eqref{gronwall} is non-negative so that the Gr\"{o}nwall's lemma can be applied. It follows from \eqref{gronwall} and the assumption \eqref{initial}  that
\begin{align}\label{energy1}
& \sup_{t\in  [0,T]} \frac{1}\e  E_\e  [ \uu_\e  | \Sigma] (t)+\frac 1{\e ^2}\int_0^T\int_\O \Big| \e \p_t \uu_\e   -\p \dd_F (\uu_\e )   (\nabla\cdot\bxi) \Big|^2\, dxdt \nonumber  \\
& +\frac 1{\e ^2}\int_0^T\int_\O \(\e ^2 | \p_t \uu_\e   |^2-|\HH_\e |^2+  \Big| \HH_\e -\e  \HH |\nabla \uu_\e |\Big|^2\)\, dxdt\nonumber\\
&\qquad \leq \frac 1\e  e^{(1+T)C( \Sigma)} E_\e [\uu_\e  | \Sigma](0) \leq C_1 e^{(1+T)C( \Sigma)}.  \end{align}
Now we show   the third term  on the left-hand side of  \eqref{energy1} has a  non-negative integrand.
By \eqref{Ginzburg-Landau} and   \eqref{mean curvature app} we have  $\HH_\e =-\e  \p_t \uu_\e \cdot\frac{\nabla \uu_\e }{|\nabla \uu_\e |}$. Using this formula, we can   expand the integrand of the third term  on the LHS of \eqref{energy1}   and then apply  the Cauchy-Schwarz inequality to obtain
\begin{align*}
&~\e ^2 | \p_t \uu_\e   |^2-|\HH_\e |^2+  \Big| \HH_\e -\e  \HH |\nabla \uu_\e |\Big|^2\\
=&~\e ^2 | \p_t \uu_\e   |^2+\e ^2 |\HH|^2 |\nabla \uu_\e |^2+2\e ^2 (\HH\cdot \nabla) \uu_\e \cdot\p_t \uu_\e \\
\geq  &~\e ^2|\p_t \uu_\e +(\HH \cdot\nabla) \uu_\e |^2.
\end{align*}
This   together with \eqref{energy1} implies  
\begin{equation}\label{time est1}
\int_0^T\int_\Omega |\p_t \uu_\e +(\HH \cdot\nabla) \uu_\e |^2\, dx dt\leq e^{(1+T)C( \Sigma)}.
\end{equation}
On the other hand, it follows from \eqref{normalize pdf} and   \eqref{projection1} that $\Pi_{\uu_\e } \p_t \uu_\e\parallel \p \dd_F (\uu_\e )$. So we can decompose  
\begin{align*}
\Big| \e \p_t \uu_\e   -\p \dd_F (\uu_\e )   (\nabla\cdot\bxi) \Big|^2=\Big| \e \p_t \uu_\e   -\e \Pi_{\uu_\e } \p_t \uu_\e  \Big|^2+\Big| \e \Pi_{\uu_\e } \p_t \uu_\e   -\p \dd_F (\uu_\e )   (\nabla\cdot\bxi) \Big|^2.
\end{align*}
This together with \eqref{energy1} yields
\begin{align}\label{energy2}
 \frac 1{\e ^2}\int_0^T\int_\O \Big| \e \p_t \uu_\e   -\e \Pi_{\uu_\e } \p_t \uu_\e  \Big|^2  \leq e^{(1+T)C( \Sigma)}.   \end{align}
The above   estimate  and   \eqref{energy bound0} imply  \eqref{energy bound4}. 
Concerning  \eqref{nablapsiest}, 
\begin{align*}
\int_\O |\nabla\psi_\e|\, dx& \overset{\eqref{projection}}\leq  \int_\O \(\frac \e 2|\Pi_{\uu_\e} \nabla \uu_\e|^2+\frac 1{2\e}|\p \dd_F(\uu_\e)|^2\)\, dx\\
& \overset{\eqref{gougudingli},\eqref{eq:2.7global}}\leq   A_\e(\uu_\e)\overset{\eqref{dissipation}}\leq A_\e (\uu_\e (\cdot,0)).
   \end{align*}
The estimate \eqref{calibration est1} follows from    \eqref{energy bound1}.
Finally,      \eqref{space der bound local} is a consequence of \eqref{energy bound3} and when it is combined  with \eqref{time est1}    leads us to \eqref{time der bound local}.
\end{proof}

We shall use \eqref{est away} together with the method of   Chen--Struwe \cite{MR990191} to show that the weak limits  of $\uu_\e$ are harmonic heat flows from the    bulk regions  $\O_t^\pm$ to $\mm_\pm$ respectively. However, the bulk potential  $F(\uu)$ (cf. \eqref{bulk potential}) depends on the relative distances to these two manifolds, and  we must find a quantitative way to distinguish them. This is done in the following:
\begin{theorem}\label{thm volume convergence}
Under the assumptions of Theorem \ref{main thm},  there exists $C_2>0$ independent of  $\e$ so that 
\begin{subequations}
\begin{align}
&\sup_{t\in [0,T]}B[\uu_\e  | \Sigma](t)\leq C_2\e,\label{volume convergenceweight}\\
&\sup_{t\in [0,T]}\int_\O| \psi_\e-c_F \1_{\O_t^+} |     \, dx\leq C_2\e^{1/2}.\label{volume convergence}
\end{align}
\end{subequations}
\end{theorem}

\begin{proof}
Within the proof, $h^\pm$ will denote   the positive/negative  part of a scalar function $h$. And we shall use the decomposition $h=h^+-h^-$. For $h\in W^{1,1}(\Omega)$, we have the following formula (cf. \cite[pp. 153]{MR3409135}): 
\[\p_{x_i} (h(x))^+= (\p_{x_i} h(x)) \1_{\{x:h(x)>0\}}(x)\quad \text{for } a.e.~~x.\label{der positive part}\]

The proof will be done in two steps. 

{\it Step 1: Derivation of differential inequalities.}
 Let $\chi(\cdot,t)=\1_{\O_t^+}-\1_{\O_t^-}$ and let $\eta(\cdot)$ be the  truncation  of the identity map
\[\eta(x):=\left\{
\begin{array}{rl}
x\qquad \text{when } &x \in [-\delta_0, \delta_0],\\
 \delta_0\qquad \text{when } &x\geq \delta_0,\\
 -\delta_0\qquad \text{when } &x\leq -\delta_0,
\end{array}
\right.\label{truncation eta}\]
 and   $\zeta:=|\eta|$.
 It follows from    \eqref{projection1}  and     the generalized  chain rule \eqref{ADM chain rule} that
\begin{align}\label{volume evo1}
\p_t \psi_\e
=&\(\p_t \uu_\e+(\HH\cdot\nabla)\uu_\e\)\cdot \p \dd_F(\uu_\e) -\HH\cdot\nabla\psi_\e.
\end{align}

Motivated by the decomposition
 \[2\psi_\e-c_F=2(\psi_\e-c_F)^++ c_F -2(\psi_\e-c_F)^-,\label{psi deco}\]
 we shall establish differential inequalities  of the following  two  energies  which sum up to $B[\uu_\e  | \Sigma](t)$ (cf. \eqref{gronwall2new}):
\begin{subequations}\label{gronwall0}
\begin{align}
g_\e(t):=&\int  \( \psi_\e-c_F\)^+\zeta\circ\dd_\Sigma  \, dx,\label{gronwall1}\\
h_\e(t):=&\int   \Big(c_F\chi-c_F+ 2(\psi_\e-c_F)^- \Big)\eta\circ\dd_\Sigma \, dx.\label{gronwall2}
\end{align}
\end{subequations}
Since $\psi_\e\geq 0$, we have $(\psi_\e-c_F)^-\in [0,c_F]$ and thus $c_F -2(\psi_\e-c_F)^-$ ranges in $ [-c_F,c_F]$. 
 In view of \eqref{truncation eta}, we have $ c_F \chi \eta\circ \dd_\Sigma\geq 0$, so  the integrands of these two  energies are both non-negative.
Moreover,  \eqref{initial} implies that  
\[g_\e(0)+h_\e(0)\lesssim \e.\] 
Now we proceed in the derivation of G\"{o}nwall's inequalities of $g_\e$ and $h_\e$.
Using \eqref{volume evo1}
\begin{align*}
g_\e'(t)
\overset{\eqref{volume evo1}}=&\int_{\{ \psi_\e > c_F\}} (\p_t \uu_\e+(\HH\cdot\nabla)\uu_\e)\cdot \p \dd_F(\uu_\e) \zeta(\dd_\Sigma) \\
&-\int_{\{ \psi_\e > c_F\}} \HH\cdot \nabla\psi_\e \zeta(\dd_\Sigma) +\int  \(\psi_\e-c_F\)^+ \p_t \zeta(\dd_\Sigma) \\
\overset{\eqref{der positive part} }{=}&\int_{\{ \psi_\e > c_F\}} (\p_t \uu_\e+(\HH\cdot\nabla)\uu_\e)\cdot\p \dd_F(\uu_\e)\zeta(\dd_\Sigma) \\
&-\int  \HH\cdot \nabla \(\psi_\e-c_F\)^+ \zeta(\dd_\Sigma) -\int   \(\psi_\e-c_F\)^+ \HH\cdot\nabla \zeta(\dd_\Sigma) \\
&+\int  \(\p_t \zeta(\dd_\Sigma)+\HH\cdot\nabla \zeta(\dd_\Sigma)\)  \(\psi_\e-c_F\)^+\end{align*}
Finally by an integration by part, we can merge   the second and the third integral in the last display:
 \begin{align*}
g_\e'(t)
\overset{\eqref{eq:2.7global}}\leq &\int_{\{ \psi_\e > c_F\}} \left|(\p_t \uu_\e+(\HH\cdot\nabla)\uu_\e)\cdot\frac{\p \dd_F(\uu_\e)}{|\p \dd_F(\uu_\e)|}  \sqrt{2F(\uu_\e)}\right| \zeta(\dd_\Sigma) \\
&+\int  (\div \HH) \(\psi_\e-c_F\)^+ \zeta(\dd_\Sigma)  +\int  \(\p_t \zeta(\dd_\Sigma)+\HH\cdot\nabla \zeta(\dd_\Sigma)\)  \(\psi_\e-c_F\)^+\nonumber\\
&\overset{\eqref{mcf}}\leq \int \e \Big|\p_t \uu_\e+(\HH\cdot\nabla)\uu_\e\Big|^2+ \int   \frac 1{\e}{F(\uu_\e)}\zeta^2(\dd_\Sigma)  +Cg_\e(t)\\
&\overset{ \eqref{energy bound3}}\leq \int \e \Big|\p_t \uu_\e+(\HH\cdot\nabla)\uu_\e\Big|^2+CE_\e[\uu_\e |\Sigma] +Cg_\e(t). \end{align*}
 In view of \eqref{time est1}, we can apply  Gr\"{o}nwall's lemma and obtain $g_\e(t)\leq C \e$.

Similar calculation shows 
$h_\e'(t)\leq C h_\e(t)$. For simplicity we denote $$c_F\chi-c_F+ 2(\psi_\e-c_F)^-=:w_\e.$$ Using $\p_i \chi\eta (\dd_\Sigma)\equiv 0$ (in the sense of  distributions), we find 
  \[\p_i w_\e \eta(\dd_\Sigma)=2\p_i \psi_\e \1_{\{\psi_\e< c_F\}}\eta(\dd_\Sigma)\quad \text{for } a.e.~~x.\] So by the same  calculation for $g_\e$  we obtain
\begin{align*}
h_\e'(t)
\leq &\int_{\{ \psi_\e < c_F\}} 2\left|(\p_t \uu_\e+(\HH\cdot\nabla)\uu_\e)\cdot\frac{\p \dd_F(\uu_\e)}{|\p \dd_F(\uu_\e)|}  \sqrt{2F(\uu_\e)}\right| \zeta(\dd_\Sigma)  \\
&+\int  (\div \HH)w_\e\eta(\dd_\Sigma)  +\int  \Big(\p_t \eta(\dd_\Sigma)+\HH\cdot\nabla \eta(\dd_\Sigma)\Big)  w_\e\\
\leq & \int \e \Big|\p_t \uu_\e+(\HH\cdot\nabla)\uu_\e\Big|^2+C E_\e[\uu_\e |\Sigma]  +Ch_\e(t),\end{align*}
and $h_\e(t)\leq C\e$ follows from  the Gr\"{o}nwall lemma, and we thus proves \eqref{volume convergenceweight}.
Finally,
\begin{align}
&\int |2\psi_\e-c_F-c_F\chi|\zeta(\dd_\Sigma)\nonumber\\
& \overset{\eqref{psi deco}}\leq   \int 2(\psi_\e-c_F)^+\zeta(\dd_\Sigma) + \int \Big|c_F -2(\psi_\e-c_F)^--c_F\chi\Big| \zeta(\dd_\Sigma)\nonumber\\
 & =  2g_\e+h_\e\leq C\e.\label{gronwall3}
\end{align}

{\it Step 2: Pass to the  unweighted inequality.}
We first note that \eqref{gronwall3} implies   $\eqref{volume convergence}$ with $\O$    replaced by  $\O\backslash B_{\delta_0}(\Sigma_t)$. So we shall focus on the estimate in $B_{\delta_0}(\Sigma_t)$. 
We shall  use the following elementary estimate  
\[
\left(\int_{0}^{\delta_0}|f(y)| \, dy\right)^2  \leq 2\|f\|_{\infty} \int_{0}^{\delta_0}|f(y)| y \, dy\quad  \forall f \in L^{\infty}\left(0, \delta_0\right).
\]
Let $\chi_\e=\frac{2\psi_\e-c_F}{c_F}$. It follows from \eqref{L infinity bound1} that $\| \chi_\e\|_{\infty}$ is uniformly bounded.
For each fixed $p\in \Sigma_t$ and $y\in [-\delta_0,\delta_0]$, we have $|\dd_\Sigma| (p+y \nn(p), t  )=|y|$. So we can apply the above inequality to estimate 
\begin{align*}
&\(\int_{0}^{\delta_0}\left|\chi (p+ y \nn, t )-\chi_\e (p+ y \nn, t )\right| \, dy\)^2\\
\leq & 2\(1+\| \chi_\e\|_{\infty}\)\int_{0}^{\delta_0}\left|\chi (p+y \nn, t )-\chi_\e (p+y \nn, t )\right| |\dd_\Sigma| (p+y \nn, t  ) \, dy.
\end{align*}
So by area formula, 
\begin{align*}
& \(\int_{B_{\delta_0}(\Sigma_t) }\left|\chi(x,t)-\chi_\e(x, t)\right| \, dx\)^2  \\
=&  ~  \(\sum_{\pm} \int_{\Sigma_t } \int_{0}^{\delta_0}\left| \chi\left(p\pm y \nn, t\right)-\chi_\e (p\pm y \nn, t )\right| \, dy    d S(p)\)^2  \\
\leq  & ~C\int_{\Sigma_t }\int_{-\delta_0}^{\delta_0}\left|\chi\left(p+y \nn, t\right)-\chi_\e\left(p+y \nn, t\right)\right| |\dd_{\Sigma}|\left(p+y \nn,  t \right) \, dy d S(p) \\
=& ~ \int_{B_{\delta_0}(\Sigma_t) }\left|\chi(x, t)-\chi_\e(x, t)\right| |\dd_\Sigma(x,  t )| \, dx.
\end{align*}
This implies  the estimate  in $B_{\delta_0}(\Sigma_t)$.  
\end{proof}

\begin{corollary}\label{global control prop}
There exists a sequence of $\e_k\downarrow 0$  and    $ \uu^\pm(x,t)$  
so that 
\begin{align}\label{u equal 0 region}
 \uu^\pm\in L^\infty(0,T;     L^\infty(\O)\cap H^1_{loc}(\O^+_t;\mm_\pm)) 
,\p_t \uu^\pm\in  L^2(0,T;  L^2_{loc}(\O^+_t)),
\end{align}
and 
\begin{subequations}\label{weak strong convergence}
\begin{align}
\p_t \uu_{\e_k}\xrightarrow{ k\to\infty } \p_t \uu^\pm   &~\text{weakly in}~  L^2(0,T;L^2_{loc}(\O^\pm_t)),\label{deri con2}\\
\nabla \uu_{\e_k}\xrightarrow{k\to\infty }  \nabla \uu^\pm   &~\text{weakly in}~  L^\infty(0,T;L^2_{loc}(\O^\pm_t)),\label{deri con}\\
  \uu_{\e_k}\xrightarrow{k\to\infty }    \uu^\pm   & ~\text{strongly in}~  C([0,T];L^2_{loc}(\O^\pm_t ;\mm_\pm)).\label{deri con1}
\end{align}
\end{subequations}
\end{corollary}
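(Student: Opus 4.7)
The plan is to obtain the corollary as a direct consequence of the uniform bounds established in Lemma \ref{lemma level} together with the identification of the target manifold via Theorem \ref{thm volume convergence}. The main ingredients are (i) the local-in-space uniform bounds \eqref{space der bound local}--\eqref{time der bound local} away from $\Sigma_t$, (ii) the global $L^\infty$ bound \eqref{L infinity bound1}, and (iii) the $L^1$-convergence of $\psi_\e$ to $c_F \1_{\O_t^+}$.

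First I would set up a suitable exhaustion of the bulk regions. For each $\delta\in(0,\delta_0)$, define the open parabolic sets
\[
Q^\pm_\delta := \bigcup_{t\in[0,T]}\bigl(\O^\pm_t\setminus B_\delta(\Sigma_t)\bigr)\times\{t\},
\]
which form an ascending exhaustion of $\bigcup_{t\in[0,T]}\O^\pm_t\times\{t\}$ as $\delta\downarrow 0$ by smoothness of $\Sigma$. On each $Q^\pm_\delta$, the estimates \eqref{est away} yield uniform bounds
\[
\|\nabla\uu_\e\|_{L^\infty(0,T;L^2(Q^\pm_\delta\cap\{t\}))}+\|\p_t\uu_\e\|_{L^2(Q^\pm_\delta)}\leq C(\delta),
\]
and the $L^\infty$ bound \eqref{L infinity bound1} gives $\|\uu_\e\|_{L^\infty}\leq c_0$. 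By Banach--Alaoglu, along a subsequence, $\nabla\uu_\e\rightharpoonup G^\pm$ weakly-$\ast$ in $L^\infty(0,T;L^2(Q^\pm_\delta\cap\{t\}))$ and $\p_t\uu_\e\rightharpoonup v^\pm$ weakly in $L^2(Q^\pm_\delta)$. The Aubin--Lions compactness lemma, applied to $\uu_\e$ bounded in $L^\infty(0,T;H^1(Q^\pm_\delta\cap\{t\}))$ with $\p_t\uu_\e$ bounded in $L^2$, yields strong convergence in $C([0,T];L^2(Q^\pm_\delta\cap\{t\}))$ to some $\uu^\pm$, and consistency of derivatives forces $G^\pm=\nabla\uu^\pm$ and $v^\pm=\p_t\uu^\pm$. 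A diagonal extraction across a sequence $\delta_j\downarrow 0$ produces a single subsequence $\e_k\downarrow 0$ realizing \eqref{deri con}--\eqref{deri con1}.

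Next I would identify the target. From the calibration \eqref{calibration est2} we have $\int_0^T\int_\O F(\uu_\e)\,dx\,dt\lesssim \e^2$, so $F(\uu_{\e_k})\to 0$ in $L^1(\O\times(0,T))$, and after a further subsequence a.e.; combined with a.e.\ convergence $\uu_{\e_k}\to\uu^\pm$ (from \eqref{deri con1}) and continuity of $F$, we deduce $F(\uu^\pm)=0$ a.e., hence $\uu^\pm(x,t)\in\mm$ a.e. To split into $\mm_+$ and $\mm_-$, observe that $\psi_{\e_k}=\dd_F\circ\uu_{\e_k}$ converges a.e. to $\dd_F(\uu^\pm)$ by Lipschitz continuity of $\dd_F$ (Lemma \ref{lemma quasidis}), while by Theorem \ref{thm volume convergence}, $\psi_{\e_k}\to c_F\1_{\O^+_t}$ in $L^1$. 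Thus $\dd_F(\uu^+)=c_F$ a.e.\ on $\cup_t \O^+_t\times\{t\}$ and $\dd_F(\uu^-)=0$ a.e.\ on $\cup_t \O^-_t\times\{t\}$, and \eqref{eq:1.6} forces $\uu^+\in\mm_+$ and $\uu^-\in\mm_-$ a.e., giving the regularity \eqref{u equal 0 region}.

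The main technical obstacle is the time-dependence of the bulk domains $\O^\pm_t$, since the weak convergences must be formulated over the moving region, not a fixed cylinder. The smooth evolution of $\Sigma_t$ away from $\p\O$ ensures that $Q^\pm_\delta$ is an open neighborhood of compact subsets of $\overline{\O^\pm_t}\setminus B_\delta(\Sigma_t)$ uniformly in $t$, which lets the Aubin--Lions argument and the diagonal extraction go through locally. A minor point is that \eqref{deri con} is stated weakly in $L^\infty(0,T;L^2_{loc})$; this is standard weak-$\ast$ compactness of bounded sequences in $L^\infty(0,T;L^2(K))$ for each compact $K\subset\O^\pm_t$-slice, combined with the diagonal across $\delta_j$.
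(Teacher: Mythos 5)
Your argument follows the same route as the paper's: you extract weak limits from the bounds in \eqref{est away} and \eqref{L infinity bound1} on the exhausting sets $\O^\pm_t\setminus B_\delta(\Sigma_t)$, diagonalize over $\delta\downarrow 0$, and identify the target via $F(\uu^\pm)=0$ together with Theorem \ref{thm volume convergence} and \eqref{eq:1.6}; you merely make the Aubin--Lions compactness explicit where the paper compresses this to ``by a diagonal argument.''

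One quantitative slip worth flagging: \eqref{calibration est2} combined with \eqref{nablapsiest} gives only $\sup_{t}\int_\O F(\uu_\e)\,dx\lesssim\e$, not $\e^2$. Indeed $\int_\O\tfrac1\e F(\uu_\e)\leq E_\e[\uu_\e|\Sigma]+\int_\O|\nabla\psi_\e|\lesssim\e+1$, and after multiplying by $\e$ the best global rate is $O(\e)$; the $\e^2$ rate holds only on the truncated regions $\O^\pm_t\setminus B_\delta(\Sigma_t)$ via \eqref{space der bound local}, which is the estimate the paper actually invokes (together with Fatou's lemma). Since all you need is $F(\uu_{\e_k})\to 0$ in $L^1$ and hence a.e.\ along a further subsequence, your conclusion is unaffected, but the stated rate is incorrect.
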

\begin{proof}
It follows from \eqref{L infinity bound1} and  \eqref{est away}   that,  for any $\delta\in (0,\delta_0)$,   there exists a subsequence $\e_k=\e_k(\delta)>0$ such that
\begin{subequations}\label{udelta conv}
\begin{align}
  \uu_{\e_k}\xrightarrow{k\to\infty }  \uu^\pm&~\text{weakly-star in}~  L^\infty(0,T ;L^\infty(\O))\label{convergence L4},\\
\p_t \uu_{\e_k}\xrightarrow{ k\to\infty } \p_t \bar{\uu}_\delta^\pm&~\text{weakly in}~  L^2(0,T;L^2(\O^\pm_t\backslash B_\delta(\Sigma_t))),\label{convergence weak time der}\\
\nabla \uu_{\e_k}\xrightarrow{k\to\infty } \nabla \bar{\uu}_\delta^\pm&~\text{weakly-star in}~  L^\infty(0,T;L^2(\O^\pm_t\backslash B_\delta(\Sigma_t))),\label{convergence weak gradient}
  \end{align}
\end{subequations}
 and      $\uu^\pm=\bar{\uu}_\delta^\pm$ a.e. in $U_\pm(\delta):=\cup_{t\in [0,T]} \(\O_t^\pm\backslash B_\delta(\Sigma_t)\)\times \{t\}$. By the arbitrariness of $\delta$ we deduce 
\begin{equation}\label{regular limit}
\uu^\pm\in L^\infty(0,T;H^1_{loc}(\O^\pm_t))   ~\text{with}~\p_t \uu\in L^2(0,T;L^2_{loc}(\O^\pm_t)).
\end{equation}
Moreover, by a diagonal argument   we obtain \eqref{deri con2} and \eqref{deri con}, and also    \eqref{deri con1} by the  Aubin--Lions lemma.

It remains to show that $\uu^\pm$ are mappings into $\mm_\pm$. 
 Using \eqref{deri con1}, \eqref{space der bound local},  and Fatou's lemma, we deduce that $F(\uu^\pm)=0$ a.e. in $\O$. In view of  \eqref{limit manifold} we deduce    that  the images of $\uu^\pm$ lie  in $\mm=\mm_+\sqcup  \mm_-$. Owning to  \eqref{deri con1} and \eqref{L infinity bound1},
 \[\psi_{\e_k}\overset{\eqref{psi}}=\dd_F\circ \uu_{\e_k} \xrightarrow{k\to\infty} \dd_F\circ \uu^\pm  \text{ strongly in }C([0,T];L^2_{loc}(\O^\pm_t)).\]
 This together  with  \eqref{volume convergence} and  \eqref{eq:1.6} yields that $\uu^\pm$ maps into $\mm_\pm$ respectively.  Combining this with    \eqref{regular limit} yields \eqref{u equal 0 region}.
   \end{proof}

\begin{lemma}\label{area control}
For any $\delta\in (0,\delta_0)$, there exist   $b^\pm_\delta \in [ \delta,2\delta]$ s.t. the sets 
\[\{x:   \psi_\e  >  c_F-b^+_\delta \}\text{ and }\{x:\psi_\e  <b^-_\delta\}\]  have    finite perimeters and 
\begin{subequations}
\begin{align}
   \left|\mathcal{H}^{d-1}\(\{x:\psi_\e  =c_F-b^+_\delta\}\)-\mathcal{H}^{d-1} (\Sigma_t)\right| & \leq   C \e^{1/2}\delta^{-1},\label{area compare8}
 \\  \left|\mathcal{H}^{d-1}\(\{x:\psi_\e  =b^-_\delta\}\)-\mathcal{H}^{d-1} (\Sigma_t)\right| & \leq  C \e^{1/2}\delta^{-1}.\label{area compare}
\end{align}
\end{subequations}
\end{lemma}
\begin{proof}
For any $\delta<\delta_0\ll c_F$,  we denote (within the proof of the lemma) 
\[\O_t^{\e,\delta}=\{x\in \O:c_F-2\delta< \psi_\e(x,t) <  c_F-\delta\}.\]   
We shall also denote the $d$-Lebesgue's measure of a set $A$ by $|A|$.

 Using \eqref{calibration est1} and the co-area formula (cf. \cite[section 5.5]{MR3409135}), we deduce that for almost every $\delta \in (0,\delta_0)$,
\begin{align*}
&C\e   \overset{\eqref{calibration est1}}\geq   \int_{ \O_t^{\e,\delta}} \(|\nabla\psi_\e|- \bxi\cdot \nabla\psi_\e\)\, dx \qquad (\geq 0)\\
&=\int_{c_F -2\delta}^{c_F-\delta}   \mathcal{H}^{d-1}\(\{x:\psi_\e  =s\}\)\, ds-  \int_{ \p \O_t^{\e,\delta}} \bxi\cdot \nu \psi_\e   \, d\mathcal{H}^{d-1}  +\int_{ \O_t^{\e,\delta}} (\div \bxi) \psi_\e  \, dx.
\end{align*}  
where $\nu$ is the outward unit normal of the set under integration, defined on its (measure-theoretic) boundaries. Note that $\|\psi_\e\|_{\infty}$ is uniformly bounded due to \eqref{L infinity bound1}.
So we can estimate 
\begin{align}
 &\Big|\int_{c_F -2\delta}^{c_F-\delta}    \mathcal{H}^{d-1}\(\{x:\psi_\e  =s\}\)\, ds -  \int_{ \p \O_t^{\e,\delta}} \bxi\cdot \nu \psi_\e   \, d\mathcal{H}^{d-1}\Big|\nonumber\\ 
 \leq ~& C  (\e  + \|\psi_\e\|_{\infty}  |\O_t^{\e,\delta}|)\leq C\e^{1/2}.   \label{volume est1}
\end{align}
where we use the Chebyshev  inequality and \eqref{volume convergence}  in the last step.
On the other hand, applying  the divergence theorem and  adding zero,
\begin{align*}
 \int_{ \p \O_t^{\e,\delta}} \bxi\cdot \nu \psi_\e   \, d\mathcal{H}^{d-1} 
=&  \(c_F-2\delta \) \int_{ \{\psi_\e> c_F-2\delta\}} \div \bxi    \, dx+  (c_F-\delta) \int_{ \{\psi_\e< c_F- \delta\}}  \div \bxi    \, dx,\\
-\delta  \mathcal{H}^{d-1} (\Sigma_t)=&-\(c_F-2\delta \)\int_{\O_t^+}\div \bxi\, dx-(c_F-\delta) \int_{\O_t^-}\div \bxi\, dx.
\end{align*}
 Adding the above two equations and substituting  into \eqref{volume est1}, we obtain  
\begin{align}\label{volume est3}
&\left|  \int_{c_F -2\delta}^{c_F-\delta}   \mathcal{H}^{d-1}\(\{x:\psi_\e  =s\}\)\, ds-\delta  \mathcal{H}^{d-1} (\Sigma_t)\right|\nonumber\\
&\leq C\(  \e^{1/2} +  \Big|  \O_t^+\triangle     {\{x: \psi_\e> c_F-2\delta\}} \Big|  +  \Big|  \O_t^- \triangle    {\{x: \psi_\e < c_F -\delta\}} \Big|\),
\end{align}
where $A\triangle B=(A-B)\cup (B-A)$ is the symmetric difference of two sets $A,B$. 
We rewrite the last two terms by 
\begin{align*}
r_\e^+:=  &~\Big|  \O_t^+\triangle     {\{x: \psi_\e> c_F-2\delta\}} \Big|\\
=&~\Big|     {\{x\in \O_t^+: \psi_\e\leq  c_F-2\delta\}} \Big|+  \Big|   {\{x\in \O_t^-: \psi_\e >c_F-2\delta\}} ,\\
r_\e^-:=  &~\Big|  \O_t^-\triangle     {\{x: \psi_\e < c_F- \delta\}} \Big|\\
=&~\Big|     {\{x\in \O_t^-: \psi_\e\geq   c_F- \delta\}} \Big|+  \Big|   {\{x\in \O_t^+: \psi_\e < c_F-\delta\}} \Big|.\end{align*} 
Now using  the Chebyshev  inequality and \eqref{volume convergence}  we get     
$r_\e^-+r_\e^+\leq C \e^{1/2}$. Substituting this estimate into \eqref{volume est3} leads  to 
\begin{align}\label{volume est4}
&\left| \frac 1 \delta   \int_{c_F-2 \delta}^{c_F-\delta}   \mathcal{H}^{d-1}\(\{x:\psi_\e  =s\}\)\, ds- \mathcal{H}^{d-1} (\Sigma_t)\right|\leq C\e^{1/2}\delta^{-1}.
\end{align}
So the existence of $b^+_\delta\in [\delta,2\delta]$ satisfying  \eqref{area compare8}  follows from  Fubini's theorem. The inequality \eqref{area compare} can be done in the same way and we omit the proof.\end{proof}

 \begin{prop}\label{prop estimates}
Let   $\e_k\downarrow 0$ be the sequence in Corollary \ref{global control prop}. Then there exists   $b^\pm_k \in [\e_k^{1/6},2\e_k^{1/6}]$ so that the sets
\begin{subequations}\label{omegasets}
\begin{align}
\O_t^{k,+}&:=\{x\in \O:      \psi_{\e_k}(x,t) > c_F-b^+_k\},\\
\O_t^{k,-}&:=\{x\in \O: \psi_{\e_k}(x,t)< b^-_k\}
\end{align}
\end{subequations}
have uniformly bounded  perimeter. Moreover,  up to the extraction of a subsequence, we have 
\begin{subequations}
\begin{align}
  &\left|\mathcal{H}^{d-1}(\p  \O_t^{k,\pm})-\mathcal{H}^{d-1} (\Sigma_t)\right| \leq  C \e_k^{1/3},\label{area compare1}\\
 & \1_{\O_t^{k,\pm}}\xrightarrow{k\to \infty} \1_{\O_t^\pm}\text{ weakly-star in }BV(\O),\label{area converge}\\
  & \p \O_t^{k,\pm} \xrightarrow{k\to \infty}	 \Sigma_t\quad \text{ under Hausdorff metric }.\label{hausdorff conv}
\end{align}
\end{subequations}
Finally there exists   $K_1\in\mathbb{N}^+$ so that for any $k>K_1$, the solution   $\uu_{\e_k}$ satisfies
\begin{subequations}
\begin{align}
 &\uu_{\e_k}(\O_t^{k,\pm})\subset   B_{\delta_0}(\mm_\pm)\label{ukin ball},\\
\sup_{t\in  [0,T]}  & \int_{\O_t^{k,\pm}} \Big|  \nabla P_{\mm} (\uu_{\e_k})   \Big|^2\, dx+ \int_0^T\int_{\O_t^{k,\pm}}  \Big|  \p_t  P_{\mm} (\uu_{\e_k})   \Big|^2\, dxdt\leq C.\label{local energy}
 \end{align}
\end{subequations}
\end{prop}
\begin{proof}
Choosing $\delta=\delta_k:=\e_k^{1/6}$  in Lemma \ref{area control}  yields $b_k^+ \in  [\e_k^{1/6},2\e_k^{1/6}]$ so that
\[ \left|\mathcal{H}^{d-1}\(\{x:\psi_\e  =c_F-b^+_k\}\)-\mathcal{H}^{d-1} (\Sigma_t)\right|\leq \e_k^{1/3}.\]
This  leads to    the `plus' case of \eqref{area compare1} and the `minus' case can be done in the same way. By   \eqref{volume convergence},
\[\1_{\{x:\psi_\e   > c_F-b^+_k\}}\xrightarrow{\e\to 0}\1_{\O_t^+} \text{ strongly in } L^1(\O),~\text{ for each fixed }k.\]
By a diagonal argument,
 we find a subsequence  of $\e_k$ (without relabeling)   so    that 
\[\1_{\O_t^{k,+}}\xrightarrow{k\to \infty} \1_{\O_t^+}\text{ strongly in } L^1(\O).\]
This combined with \eqref{area compare1} implies the `plus' case of   \eqref{area converge}.
The `minus' cases can be done in a  same way. The convergence  \eqref{hausdorff conv} is a consequence of the Blaschke's Theorem (cf. \cite[Chapter 7]{MR1835418}).
 By \eqref{quasidistance}, \eqref{eq:1.6} and $b_k^\pm\to 0$, there exists $K_1>0$ so that for any $k\geq K_1$ there holds
\begin{align*}
\dd_F\circ \uu_{\e_k}>c_F-b_k^+\text{ implies } \uu_{\e_k}\in B_{\delta_0}(\mm_+),\\
\dd_F\circ \uu_{\e_k}< b_k^-\text{ implies } \uu_{\e_k}\in B_{\delta_0}(\mm_-),
\end{align*}
and thus  \eqref{ukin ball} holds. 

It remains to use  \eqref{ukin ball} to derive \eqref{local energy}. So we shall always assume $k\geq K_1$.
If we denote the co-dimension of  $\mm$ to be $\ell\in \mathbb{N}^+$, then as  
  the nearest point projection $P_{\mm}$  is smooth in $B_{\delta_0}(\mm)$,
any vector  $\uu\in B_{\delta_0}(\mm)$ can be written as 
\[\uu=P_{\mm} \uu+\dd_{\mm}(\uu) \bnu_{\mm}(  \uu)=P_{\mm} \uu+\sum_{j=1}^\ell \dd_j(\uu) \bnu_j(P_{\mm}\uu)\label{local projection}\] 
where $\{\bnu_j\}_{j=1}^\ell$ is  an orthonormal frame of the normal space at $P_{\mm}\uu$ and $\{\dd_j(\uu)\}_{j=1}^\ell$ is the   coordinate of $\uu_{\e_k}-P_{\mm} \uu_{\e_k}$ in such a    frame.   
So we have 
\[\dd^2_{\mm}(\uu)= \sum_{j=1}^\ell \dd_j^2(\uu)~\text{ and }~ \p\dd_j(\uu)=\bnu_j(P_{\mm}\uu),\quad  \forall \uu\in B_{\delta_0}(\mm).\label{multi distance}\] Note that $\{\dd_j\}_{j=1}^\ell\subset  C^1(B_{\delta_0}(\mm))$ and    $\dd_{\mm}(\uu)\in C^1(B_{\delta_0}(\mm)\backslash \mm)$, and in general $\dd_\mm$ is not differentiable on $\mm$. 
  On the (open) set  $\{x\mid \uu_{\e_k}(x,t)\in B_{\delta_0}(\mm)\}$, we can 
differentiate \eqref{local projection} and get
\[\p_{x_i} \uu_{\e_k}=\p_{x_i}  \(P_{\mm} \uu_{\e_k}\)+\sum_{j=1}^\ell \dd_j(\uu_{\e_k})\p_{x_i} \bnu_j(P_{\mm}\uu_{\e_k})+\sum_{j=1}^\ell \p_{x_i} \dd_j(\uu_{\e_k}) \bnu_j(P_{\mm}\uu_{\e_k})\label{decom near m}\]
   for $0\leq i\leq d$.
Note that the first two   terms are tangential to $\mm$ while the last one is  orthogonal to $\mm$. So  we have 
\begin{align*}
|\p_{x_i} \uu_{\e_k}|^2& \geq |\p_{x_i}  (P_{\mm} \uu_{\e_k})|^2+\left|\sum_{j=1}^\ell \p_{x_i} \dd_j(\uu_{\e_k}) \bnu_j(P_{\mm}\uu_{\e_k})\right|^2\nonumber\\
&\quad +2 \p_{x_i}  \(P_{\mm} \uu_{\e_k}\)\cdot \sum_{j=1}^\ell \dd_j(\uu_{\e_k})\p_{x_i} \bnu_j(P_{\mm}\uu_{\e_k})\nonumber\\
&=  |\p_{x_i}  (P_{\mm} \uu_{\e_k})|^2 +\sum_{j=1}^\ell \left|\p_{x_i} \dd_j(\uu_{\e_k}) \right|^2\nonumber\\
&\quad +2\sum_{j=1}^\ell \dd_j(\uu_{\e_k}) A(P_{\mm}\uu_{\e_k})\Big( \p_{x_i}  \(P_{\mm} \uu_{\e_k}\),\p_{x_i}  \(P_{\mm} \uu_{\e_k}\)\Big)
\end{align*}
where $A(P_{\mm}\uu_{\e_k})(\cdot,\cdot)$ is the second fundamental form of $\mm$ at $P_{\mm}\uu_{\e_k}$. This combined with the second formula of \eqref{multi distance} yields  
\begin{align}
|\p_{x_i} \uu_{\e_k}|^2& \geq (1-C\delta_0) |\p_{x_i}  (P_{\mm} \uu_{\e_k})|^2+\sum_{j=1}^\ell \left|\p_{x_i}  \uu_{\e_k}\cdot  \bnu_j(P_{\mm}\uu_{\e_k})\right|^2, \label{nonlinear decom}
\end{align}
where $C>0$ is a constant depending on the geometry of  $\mm$.
On the other hand, by  the first case of   \eqref{projection1}  and \eqref{local projection},
\begin{align*}
|\Pi_{\uu_{\e_k}} \p_{x_i}  \uu_{\e_k}|^2 \dd_\mm^2(\uu_{\e_k})&=|\p_{x_i} \uu_{\e_k}\cdot \bnu_{\mm}(\uu_{\e_k})\dd_\mm(\uu_{\e_k}) |^2\nonumber\\
&=\left|\sum_{j=1}^\ell \dd_j(\uu_{\e_k})\p_{x_i} \uu_{\e_k}\cdot  \bnu_j(P_{\mm}\uu_{\e_k})\right |^2\nonumber\\
&\leq\sum_{j=1}^\ell \dd_j^2(\uu_{\e_k}) \sum_{j=1}^\ell \left|\p_{x_i} \uu_{\e_k}\cdot  \bnu_j(P_{\mm}\uu_{\e_k})\right |^2.
\end{align*}
Using the first equation of \eqref{multi distance}, we obtain $|\Pi_{\uu_{\e_k}} \p_{x_i}  \uu_{\e_k}|^2\leq \sum_{j=1}^\ell \left|\p_{x_i} \uu_{\e_k}\cdot  \bnu_j(P_{\mm}\uu_{\e_k})\right |^2$. 
Subtracting this inequality    from \eqref{nonlinear decom}, and using orthogonality of the projection \eqref{projection1}, we obtain 
\begin{align}\label{linear control nonlinar}
 (1-C\delta_0)|\p_{x_i}  (P_{\mm} \uu_{\e_k})|^2 \leq    |\p_{x_i} \uu_{\e_k} -\Pi_{\uu_{\e_k}} \p_{x_i}  \uu_{\e_k}|^2,\qquad 0\leq i\leq d.      \end{align}
   Choosing $\delta_0$ sufficiently small in \eqref{linear control nonlinar} and using  \eqref{energy bound4}, we obtain  \eqref{local energy}.
 
\end{proof}

\begin{theorem}\label{thm usesbv}
The   mappings 
\[\uu^\pm:\bigcup_{t\in [0,T]} \O_t^\pm \times \{t\}\mapsto \mm_\pm\]  obtained in Corollary \ref{global control prop}   are weak solutions of  harmonic heat flows that satisfy 
\begin{align}
\uu^\pm\in L^2(0,T; H^1(\O_t^\pm;\mm_\pm))\label{upm regu}
\end{align}
additionally.
Moreover,  with the notations in  Proposition \ref{prop estimates},  the functions 
\[\vv_k(\cdot,t):=\sum_\pm P_{\mm_\pm} \circ  \uu_{\e_k}(\cdot,t) ~\1_{\O_t^{k,\pm} }\]satisfy the following properties for a.e. $t\in [0,T]$:
 \begin{subequations}
\begin{align}
 \vv_k(\cdot,t) &\xrightarrow{k\to\infty} \uu=\sum_{\pm}\uu^\pm(\cdot,t) ~\1_{\O_t^\pm } \text{ weakly-star in  } BV(\O)\label{strong convergence of u1},\\
\nabla^{a} \vv_k   & \xrightarrow{k\to\infty}  \1_{\O_t^\pm} \nabla \uu^\pm \text{   weakly in } L^1(\O),\\
\sum_\pm \int_{\O_t^\pm} & |\nabla \uu^\pm(\cdot,t)|^2\, dx\leq  \liminf_{k\to \infty}\sum_\pm \int_{\O_t^{k,\pm}} \left|\nabla^a \vv_k(\cdot,t)\right|^2\,dx.\label{lower semi sbv}
\end{align}
\end{subequations}
Here in \eqref{lower semi sbv}  $\nabla^{a} \vv_k$ is the absolute continuous part of the distributional gradient $\nabla$.
\end{theorem}
\begin{proof}
The sequence $\vv_k(\cdot,t)$  is  bounded in  $L^\infty(\O)$, and by \eqref{local energy} we deduce that  their   distributional derivatives  have no Cantor parts. Moreover, the   absolute continuous parts and the jump sets enjoy the estimates \eqref{local energy} and \eqref{area compare1} respectively.
So it follows from  Proposition \ref{AFP2}  that   $\{\vv_k(\cdot,t)\}$ is compact in $SBV(\O)$: there exists $\vv\in SBV(\O)$ so that  $\vv_k\to\vv$ weakly-star in  $BV(\O)$ as $k\to \infty$,
and the absolute continuous  part of the gradient 
\[\nabla^{a} \vv_k= \sum_\pm \nabla P_{\mm_\pm} (\uu_{\e_k}) ~\1_{\O_t^{k,\pm} }  \xrightarrow{k\to\infty}  \nabla^{a} \vv \text{   weakly in } L^1(\O).\]  
To identify $\vv$, we   combine \eqref{area converge} with \eqref{deri con1} and deduce that   $\vv=\sum_\pm \1_{\O_t^\pm}\uu^\pm$ a.e. in $\Omega$, and thus   \eqref{strong convergence of u1} is proved.   The lower semicontinuity of $SBV$ functions (cf. \eqref{LSC sbv}) implies  \eqref{lower semi sbv}, and thus we can improve the spatial regularity in  \eqref{u equal 0 region} to \eqref{upm regu}.
Finally combining \eqref{est away}, \eqref{weak strong convergence} with    Chen--Struwe \cite{MR990191} imply that, $\uu^\pm$  are weak solutions to   harmonic heat flows respectively.
\end{proof}
 
  \section{Proof of Theorem \ref{main thm}}\label{sec mp}  
  
     We first recall  that the estimate  \eqref{intro cali} is proved  in Lemma \ref{lemma level} (cf. \eqref{calibration est2}). The estimates \eqref{intro modulodist} and  \eqref{volume convergencethm} are obtained in Theorem
    \ref{thm volume convergence}.
    The convergence \eqref{strong global of Q} is obtained  in Corollary \ref{global control prop}. The limit $\uu^\pm$ being harmonic heat flow with regularity \eqref{reg limit} has been done in Theorem \ref{thm usesbv}.
It remains  to prove the minimal pair boundary conditions \eqref{thm minimal pair}.  To this end, we introduce the semi-distance function
\begin{align}\label{semidistance1}
 \dd_F^*(\vv_+,\vv_-)&=\inf_{\substack{\bxi(a_\pm)=\vv_\pm\\ \bxi\in H^1((a_-,a_+),\R^n)}} \int_{a_-}^{a_+} |\bxi'(t)|\sqrt{2F(\bxi(t))}\, dt.
\end{align}
for any $-\infty\leq a_-<a_+\leq \infty$.
Note that the above definition is independent of the choice of $a_\pm$ (cf. \cite{Lin2012a}).
Such a function can be used to define a semi-distance between closed sets $S_\pm\subset  \R^n$:
\[ \dd_F^*(S_+, S_-)=\inf_{\vv_\pm\in S_\pm}  \dd_F^*(\vv_+,\vv_-).\]
Let $P_{\mm_\pm}$ be the nearest point projection to $\mm_\pm$. For any subset $S_\pm\subset \mm_\pm$, we define
\begin{align}
\mathcal{N}(S_\pm,\rho):=\bigcup_{\uu_\pm\in S_\pm}\mathcal{N}(\uu_\pm,\rho),
\end{align}
where $\mathcal{N}(\uu_\pm,\rho)$ is the `normal sphere' of radius $\rho$ centered at $\uu_\pm\in \mm_\pm$ under the metric $ \dd_F^*$:
\[\mathcal{N}(\uu_\pm,\rho):=\{\uu\in\R^n:  \dd_F^*(\uu, \uu_\pm)= \rho,P_{\mm_\pm} \uu=\uu_\pm\}.\label{def normalsphere}\]
 Inspired by    \cite{wangnote}, we  introduce a   function $\kappa:\mm_+\times \mm_-\times [0,1]\to \R$ by 
 \[\kappa(\uu_+,\uu_-,\rho):= \dd_F^* \Big(\mathcal{N}\(\bar{B}_{\rho}(\uu_+)\cap \mm_+,\delta_0\),\mathcal{N}\(\bar{B}_{\rho}(\uu_-)\cap \mm_-,\delta_0\)\Big)+ 2\delta_0-c_F,\label{lowerestI0}\]
 where $c_F=2 \int_{0}^{\frac {\dist_{\mm}}2} \sqrt{2f(\lambda^2)} d \lambda$ (cf. \eqref{linpanwang cf equ}).
It is obvious that $\kappa$ is continuous w.r.t all its variables and non-increasing w.r.t $\rho$.  Note that $\mathcal{N}\(\bar{B}_{\rho}(\uu_+)\cap \mm_+,\delta_0\)$ can be visualized as a tube of thickness $\delta_0$ centered at $\bar{B}_{\rho}(\uu_+)\cap \mm_+$.
\begin{lemma}\label{wanglemma1}
$\kappa(\uu_+,\uu_-,\rho)$ is non-negative. Moreover, $\kappa(\uu_+,\uu_-,0)=0$ if and only if 
  $(\uu_+,\uu_-)\in \mm_+\times \mm_-$ is   a minimal pair.  

\end{lemma}
\begin{proof}
It is easy to check that $ \dd_F^*(\vv_+,\vv_-)$ is continuous, and it vanishes  if and only if either $\vv_+=\vv_-$ or $\vv_\pm$ both lie in one of $\mm_\pm$.  
So if we identify the sets $\mm_\pm$ as two  points, the resulting quotient space $(\R^n/\mm_\pm, \dd_F^*)$ is a metric space.

For any $\vv_\pm \in \mathcal{N}\(\bar{B}_{\rho}(\uu_\pm)\cap \mm_\pm,\delta_0\)$, by triangle inequality 
\begin{align*}
 \dd_F^*(\vv_+,\vv_-)+2\delta_0=  \dd_F^*(\vv_+,\vv_-)+\sum_{\pm} \dd_F^*(\vv_\pm,P_{\mm}\vv_\pm)\geq  \dd_F^*( P_{\mm}\vv_+,P_{\mm}\vv_-)=c_F.
\end{align*}
Minimizing $\vv_\pm$   implies that $\kappa(\uu_+,\uu_-,\rho)$ is non-negative.

If   $(\uu_+,\uu_-) $ is   a minimal pair, then   the line segment $\overline{\uu_-\uu_+}$ meets $\mm_\pm$ perpendicularly.  Let  $\vv_\pm=\overline{\uu_-\uu_+}\cap \mathcal{N}\( \uu_\pm ,\delta_0\)$. Then
\begin{align*}
\kappa(\uu_+,\uu_-,0) =   \dd_F^* (\vv_+,\vv_-)+ \sum_{\pm} \dd_F^*(\vv_\pm,\uu_\pm)-c_F=0.
\end{align*}

Now we assume   $(\uu_+,\uu_-) $ is NOT  a minimal pair, i.e. $|\uu_+-\uu_-|> \dist_\mm$.    We   claim that  
\[\mathcal{N}(\uu_+,\tfrac 12c_F)\cap \mathcal{N}(\uu_-,\tfrac 12 c_F)=\varnothing.\label{ringnonoverlap}\]  
 If \eqref{ringnonoverlap}   were wrong, there would exist $\uu\in \cap_\pm \mathcal{N}(\uu_\pm,\tfrac 12c_F)$. 
 It follows from  \eqref{def normalsphere} that    
 $\frac 12 c_F= \dd_F^*(\uu, \uu_\pm)$ and $P_{\mm_\pm} \uu=\uu_\pm.$
For any curve  $\bxi_\pm:[0,1]\mapsto \R^n$ with $\bxi_\pm(0)=\uu$ and $\bxi_\pm(1)=\uu_\pm$, by the co-area formula,  we have 
\begin{align*}
& \int_0^1 \sqrt{2F(\bxi(t))}|\bxi'(t))|\, dt\\
\overset{\eqref{bulk potential}}\geq & \int_0^1 \sqrt{2f(\dd_\mm^2(\bxi(t)))}\Big|\frac d {dt} \dd_\mm(\bxi(t))\Big|\, dt\\
=&  \int_{\dd_\mm(\uu_\pm)}^{\dd_\mm(\uu)} \sqrt{2f(\lambda^2)} \, d\lambda=\int_0^{|\uu-\uu_\pm|} \sqrt{2f(\lambda^2)} \, d\lambda.
\end{align*}
Note that the last step is due to $P_{\mm_\pm} \uu=\uu_\pm$.
Taking the infimum among all such curves we find 
\begin{align*}
\int_0^{|\uu-\uu_\pm|} \sqrt{2f(\lambda^2)} \, d\lambda\leq \dd_F^*(\uu,\uu_\pm)=\frac 12 c_F=
\int_{0}^{\frac {\dist_{\mm}}2} \sqrt{2f(\lambda^2)} d \lambda.
\end{align*}
 This   implies  that $|\uu_\pm-\uu|\leq \frac 12 \dist_\mm$, and thus  $|\uu_+-\uu_-|\leq \dist_\mm$ by  triangle inequality. This leads to  a  contradiction, and    the claim \eqref{ringnonoverlap} is proved.

 
Using  \eqref{ringnonoverlap} and  the continuity of $ \dd_F^*$, we deduce
  $$ \dd_F^*\(\mathcal{N}(\uu_+,\tfrac 12 c_F), \mathcal{N}(\uu_-,\tfrac 12 c_F)\)=:\beta>0.$$ This combined with  the  triangle inequality yields 
\begin{align*}
& \dd_F^*\(\mathcal{N}(\uu_+,\delta_0), \mathcal{N}(\uu_-,\delta_0)\)\\
\leq &~ \dd_F^*\(\mathcal{N}(\uu_+,\delta_0), \mathcal{N}(\uu_+,\tfrac 12 c_F)\)\\
&~+ \dd_F^*\(\mathcal{N}(\uu_+,\tfrac 12 c_F), \mathcal{N}(\uu_-,\tfrac 12 c_F)\)\\&~+ \dd_F^*\(\mathcal{N}(\uu_-,\tfrac 12 c_F), \mathcal{N}(\uu_-,\delta_0)\)\\
=& ~c_F-2\delta_0+\beta.
\end{align*}
This implies $\kappa(\uu_+,\uu_-,0)=\beta>0$.


\end{proof}
\begin{lemma}\label{wanglemma2}
There exist  constants $C_0=C_0(\mm)$ and $\rho_0=\rho_0(\mm)$ which only depend on the geometry of $\mm$ so that the following holds: 
 \[ \text{for any curve }\bgamma\in H^1([-\delta,\delta], \R^n) \text{ with } \gamma(\pm \delta)\in B_{\delta_0}(\mm_\pm),\label{conditioncurve}\] and any $\rho\in (0,\rho_0)$, there holds
\[\int_{-\delta}^{\delta} \frac 1 2|  \bgamma'   |^2 +\frac 1{\e^2} F (\bgamma    )-\frac 1\e   \(\dd_F\circ \bgamma\)'  \geq \frac {\min\left\{C_0\rho^2,\kappa\Big(P_{\mm} \bgamma(\delta),P_{\mm} \bgamma(-\delta),\rho\Big)\right\}}{\max\{\e,\delta\}}.\]
\end{lemma}
\begin{proof}
For any  curve $\bgamma$ satisfying \eqref{conditioncurve}, we define its first  exit time of  $B_{\delta_0}(\mm_-)$ and last  entrance  time of $B_{\delta_0}(\mm_+)$ respectively by 
\begin{align*}
t_-&=\inf\{ t\in (-\delta,\delta): \bgamma(s)\in B_{\delta_0}(\mm_-)~\text{ for }~s\in (-\delta,t)\},\\
t_+&=\sup\{ t\in (-\delta,\delta): \bgamma(s)\in B_{\delta_0}(\mm_+)~\text{ for } ~s\in (t,\delta)\}.
\end{align*}
We shall estimate three integrals
\[I_-+I_0+I_+:= \(\int_{-\delta}^{t_-}+\int_{t_-}^{t_+}+\int_{t_+}^{\delta}\) \(\frac 1 2|  \bgamma'   |^2 +\frac 1{\e^2} F (\bgamma)-\frac 1\e   \(\dd_F\circ \bgamma\)'\).\]
For $s\in (-\delta, t_-)\cup (t_+,\delta)$, we have $\bgamma(s)\in B_{\delta_0}(\mm)$. 
So we can define the normal projection of $\bgamma'$ by 
$\Pi_{\bgamma}\bgamma' =\bgamma' \cdot \bnu_\mm(\bgamma)  \bnu_\mm(\bgamma)$. Recall that   $\bnu_\mm(\bgamma)~\parallel~ \p \dd_F (\bgamma)$ (cf.  \eqref{normal of m} and \eqref{normalize pdf}). By a similar calculation as \eqref{linear control nonlinar}, we obtain
\[|\bgamma'  -\Pi_{\bgamma}\bgamma'|^2 
 \geq    (1-C\delta_0)|  (P_{\mm} \bgamma)'|^2\quad \text{ on } \quad (-\delta, t_-)\cup (t_+,\delta).\label{localproest}\] 
 Here $C$ is a constant depending on the geometry of $\mm$.
 Choosing $\delta_0\ll 1$ so that $1-C\delta_0=: 2c_0>0$, we find  
\begin{align*}
I_+ 
&\overset{\eqref{quasidistance}}=\frac 1 2 \int_{t_+}^{\delta} \(|  \bgamma'   |^2 +\frac 1{ \e^2} |\p \dd_F (\bgamma)|^2-\frac 2\e   \p\dd_F(\bgamma)\cdot  \bgamma ' \) \\
& =\frac 1 2 \int_{t_+}^{\delta} |  \bgamma'  -\Pi_{\bgamma}\bgamma'|^2+\Big|\Pi_{\bgamma}\bgamma'-\tfrac 1\e \p \dd_F(\bgamma)\Big |^2 \\
&\overset{\eqref{localproest}} \geq    c_0\int_{t_+}^\delta  |  (P_{\mm} \bgamma)'|^2\\
& \geq     \frac{c_0}{  \delta-t_+ } \Big|P_{\mm}\bgamma(t_+)-P_{\mm}\bgamma(\delta )\Big|^2.
\end{align*}
The same  calculation of  $I_-$ leads to 
$  I_-\geq  \frac{c_0}{  t_-+\delta } |P_{\mm}\bgamma(t_-)-P_{\mm}\bgamma(-\delta)|^2.$   

For $\rho\in (0,\rho_0)$ with $\rho_0=\rho_0(\mm)$ being sufficiently small, we have at least one of the following  three cases:
 \begin{enumerate}
 \item If  $\bgamma(t_+)\notin \mathcal{N}\Big(\bar{B}_\rho(P_{\mm}\bgamma(\delta))\cap\mm_+,\delta_0\Big)$, then $|P_\mm\bgamma(t_+)-P_{\mm}\bgamma(\delta)|\geq C_1\rho$ for some $C_1$ that only depends on the geometry of $\mm$. Thus $I_+\geq \frac{C \rho^2}{\delta- t_+ }$. 
\item  If  $\bgamma(t_-)\notin \mathcal{N}\Big(\bar{B}_\rho(P_{\mm}\bgamma(-\delta))\cap\mm_-,\delta_0\Big)$, then   $I_-\geq \frac{C \rho^2}{t_-+\delta }$.
\item If neither of the above cases happen, i.e.  $\bgamma(t_\pm)\in \mathcal{N}\Big(\bar{B}_\rho(P_{\mm}\bgamma(\pm\delta))\cap\mm_\pm,\delta_0\Big)$, then it follows from Cauchy-Schwarz's inequality, \eqref{semidistance1} and \eqref{quasidistance} that 
 \begin{align*}
I_0& =  \frac 1\e \int_{t_-}^{t_+}  |  \bgamma'   |\sqrt{2 F (\bgamma)}\,d\tau-\frac {\dd_F\circ\bgamma(t_+)-\dd_F\circ\bgamma(t_-)}\e \\
& \geq  \frac 1\e  \dd_F^* (\bgamma(t_+),\bgamma(t_-)) -\frac {c_F-2\delta_0}\e     \\
& \geq  \frac 1\e  \dd_F^* \Big(\mathcal{N}\(\bar{B}_{\rho}(P_{\mm}\bgamma(\delta))\cap \mm_+,\delta_0\),\mathcal{N}\(\bar{B}_{\rho}(P_{\mm}\bgamma(-\delta))\cap \mm_-,\delta_0\)\Big) -\frac {c_F-2\delta_0}\e     \\
&\overset{\eqref{lowerestI0}}= \frac 1\e  \kappa\Big(P_{\mm}\bgamma(\delta),P_{\mm}\bgamma(-\delta),\rho\Big).
 \end{align*}
 \end{enumerate}
 
\end{proof}

 \begin{proof}[Proof of  \eqref{thm minimal pair}]
We shall argue for every $t\in [0,T]$ without mentioning each time  in the sequel.
We first  use
   \eqref{deri con1} to deduce      strong convergence of $ \uu_{\e_k}$ on almost every slices. More precisely, there exists a null set $N\subset [0,\delta_0]$, namely   $\mathcal{L}^1(N)=0$,  so that   
   \[\uu_{\e_k} \left(p\pm  \delta  \nn (p)\right)\xrightarrow{k\to\infty} \uu^\pm(p\pm  \delta  \nn (p))\text{ strongly    in }  L^2(\Sigma_t)\qquad \forall \delta\notin N.\label{uniform convergence slice4}\]
 Here $\nn(p)$ is the normal vector of $p\in\Sigma_t$.
 Note that the limit in \eqref{uniform convergence slice4} makes sense due to 
  \eqref{upm regu} and Sobolev's trace theorem. Moreover,     
     \[  \uu^\pm  \left(p\pm  \delta  \nn (p)\right)\rightarrow \uu^\pm   (p ) \text{ strongly in } L^2(\Sigma_t)\text{ as } \delta\downarrow  0,\delta\notin N.\label{uniform convergence slice2}\]
   Combining \eqref{uniform convergence slice4} with \eqref{uniform convergence slice2} and a diagonal argument, we find a sequence $\delta_k\downarrow 0$  so that 
  \[\uu_{\e_k} \left(p\pm  \delta_k  \nn (p)\right)\xrightarrow{k\to\infty} \uu^\pm(p)\text{ strongly    in }  L^2(\Sigma_t).\label{uniform convergence slice1}\]
It follows from \eqref{calibration est2}, \eqref{def:xi} and the orthogonal decomposition $\nabla=\nabla_\Sigma+\nn\p_\nn$ that 
\begin{align*}
 \sup_{t\in [0,T]} \int_{B_{\delta_0}(\Sigma_t)}  \( \frac{\e}2  |\nabla_\Sigma \uu_{\e_k}  | ^2+\frac{\e}2  |\p_\nn \uu_{\e_k}  | ^2 +\frac1 {\e}{F (\uu_{\e_k} )} -\bxi\cdot \nn \p_\nn \psi_\e  \)\, d x \leq C\e.
\end{align*}
Owning to 
 \eqref{phi func control} and \eqref{energy bound3}, we have 
\begin{align*}
&\sup_{t\in [0,T]} \int_{B_{\delta_0}(\Sigma_t)}  \(  \frac 12  |\p_\nn \uu_{\e_k}  | ^2 +\frac1 {\e^2}{F (\uu_{\e_k} )} - \frac1 \e \p_\nn \psi_{\e_k}  \)\, d x\nonumber\\
 \leq &~ C+\e^{-1}\sup_{t\in [0,T]} \int_{B_{\delta_0}(\Sigma_t)}  \(1-\phi\(\frac{\dd_\Sigma}{\delta_0}\)\) |\nabla\psi_{\e_k}|\, dx \\
 \leq &~ C+ C\e^{-1}\sup_{t\in [0,T]} \int_{B_{\delta_0}(\Sigma_t)}  \min (\dd_\Sigma^2,1)  |\nabla\psi_{\e_k}|\, dx\\
 \leq &~ C\(1+\e^{-1}\sup_{t\in [0,T]} E_\e  [ \uu_{\e_k}   | \Sigma ]\).
\end{align*}
If we write $\uu_{\e_k}(x,t)=\uu_{\e_k}(p+  \tau \nn (p),t)$, then  by the area formula, we deduce that 
\begin{align}\label{normalexpand1}
&\sup_{t\in [0,T]} \int_{\Sigma_t} \underbrace{\int_{-\delta_k}^{\delta_k}\left(\frac 1 2|\p_\tau \uu_{\e_k}   |^2 +\frac 1{\e^2} F\left(\uu_{\e_k}   \right)-\frac 1\e \p_\tau \(\dd_F\circ \uu_{\e_k}\) \right)  \,d \tau}_{=:\Theta_k (p,t)}  dS(p)\leq C.
\end{align}
For each fixed $p\in \Sigma_t$ and $k$, we consider   the curves 
 \[\gamma_k(\tau;p) :=\uu_{\e_k}(p+  \tau \nn (p)): [-\delta_k,\delta_k]\mapsto \R^n.\label{gammacurve1}\]

 Assume that  there exists $t\in [0,T]$ and  a compact subset $E_t^*\subset \Sigma_t$ with $\mathcal{H}^{d-1}(E_t^*)\geq \alpha $ for some $\alpha\in (0,1/2)$ s.t. the pairs $(\uu^+(p), \uu^-(p))_{p\in E_t^*}\subset \mm_+\times \mm_-$, obtained by taking the one-sided trace of  \eqref{upm regu},  are not   minimal pairs. Then it follows from    Lemma \ref{wanglemma1}     that   
 \[\kappa(\uu^+(p),\uu^-(p),0)>0,\quad \forall p\in E_t^*.\label{kappapositive1}\]
 By  \eqref{uniform convergence slice1} and  Egorov's  theorem, there exists    a compact subset $E_t\subset E_t^*$ with $\mathcal{H}^{d-1}(E_t)\geq 0.9 \mathcal{H}^{d-1}(E_t^*)\geq  0.9\alpha $ s.t.
  \[\gamma_k(\pm \delta_k;p)= \uu_{\e_k} \left(p\pm  \delta_k  \nn (p)\right)\xrightarrow{k\to\infty} \uu^\pm(p)\text{ uniformly    on  }   E_t .\label{uniform convergence slice3}\]
As a result,  $\uu^\pm(\cdot)$ are  continuous on $E_t$ because  $\uu_{\e_k} \left(p\pm  \delta_k  \nn (p)\right)$ are continuous on $\Sigma_t$. As a result,  there exists $K>0$ so that 
\[ \gamma_k(\pm \delta_k;p)=\uu_{\e_k} \left(p\pm  \delta_k  \nn (p)\right)\in B_{\delta_0}(\mm_\pm)\qquad \forall k\geq K,p\in E_t.\label{endpointcurve}\]
 The continuity of $\uu^\pm(\cdot)$ and   \eqref{lowerestI0} imply that  
 $$\kappa(\uu^+(\cdot),\uu^-(\cdot),\cdot):  E_t\times [0,1]\mapsto [0,\infty)\text{ is continuous}.$$  
So we deduce from  \eqref{kappapositive1} that   there exists $\rho\in (0,\rho_0)$ s.t. 
 \[\inf_{p\in E_t}\kappa(\uu^+(p),\uu^-(p),\rho)=:2\beta>0.\]
 Note that $\beta$ is independent of $k$.
Owning to \eqref{uniform convergence slice3}, the  continuities  of $\kappa$ and that of  the projection $P_\mm$, there exists $K>0$ s.t.
 \[\inf_{p\in E_t}\kappa\Big(P_\mm\gamma_k( \delta_k;p),P_\mm\gamma_k(- \delta_k;p),\rho\Big)\geq \beta,\qquad \forall k\geq K.\]
 By \eqref{endpointcurve},  the curves $\gamma_k(~\cdot~;p)$ satisfy   the condition \eqref{conditioncurve}  so that  
 Lemma \ref{wanglemma2} applies to the 1D integral $\Theta_k$ defined in \eqref{normalexpand1}:
\begin{align*}
\Theta_k (p,t)&\geq \frac 1 {\max\{\e_k,\delta_k\}}\min\Big\{C_0\rho^2,\kappa\Big(P_\mm\gamma_k( \delta_k;p),P_\mm\gamma_k(- \delta_k;p),\rho\Big)\Big\}\\
&\geq \frac {\min\{C_0\rho^2,\beta\}} {\max\{\e_k,\delta_k\}} ,\qquad\qquad\qquad ~\forall p\in E_t,k\geq K.
\end{align*}
 However, this would contradict \eqref{normalexpand1} and we thus finish the proof of   \eqref{thm minimal pair}.
\end{proof}

   \section{Proof of Theorem \ref{thm init}: construction of initial data}\label{sec initial data}
We shall first modify and extend $\uu^{in}_\pm$ in the transitional region $B_{2\delta}(\Sigma_0)$ so that we can glue them into a new mapping that fulfills the desired properties in Theorem \ref{thm init}. To this aim, let $\Psi_\delta^\pm: \O_0^\pm\cup  \bar{B}_\delta(\Sigma_0)\mapsto \O_0^\pm$ be  global $C^1$ diffeomorphisms up to the boundaries which result from gluing the identity mapping in $ \O_0^\pm \backslash B_{2\delta}(\Sigma_0)$ and the projection mapping $P_{\Sigma_0}$  in $\bar{B}_{\delta}(\Sigma_0)$:
\begin{align}\label{psi fix}
\Psi_\delta^\pm(x)=\left\{
\begin{array}{rl}
P_{\Sigma_0}(x) &\text{ in }\bar{B}_{\delta}(\Sigma_0),\\
x &\text{ in   }  \O_0^\pm \backslash B_{2\delta}(\Sigma_0).
\end{array}
\right.
\end{align}
We    extend  $\uu^{in}_\pm$  by defining   
\[\uu_0^{\pm}:= 
\uu^{in}_\pm\circ \Psi_\delta^\pm \in H^1(\O_0^\pm\cup \bar{B}_{\delta}(\Sigma_0),\mm_\pm). \label{u in extension}
 \]
 This combined with \eqref{MC initial data} implies that 
 \begin{align}
(\uu_0^+(x),\uu_0^-(x))|_{ x\in B_\delta(\Sigma_0)}\text{ are  minimal pairs}.\label{initial mp}
\end{align}
We shall construct $\uu_\e^{in}$ by gluing $\uu_0^\pm$. To this end,  we define  a cut-off function 
\[\eta_\delta\in C_c^\infty(B_\delta(\Sigma_0),[0,1])~\text{ with }\eta_\delta=1\text{  in }B_{\delta/2}(\Sigma_0).\label{cut-off eta delta2}\] 
Recall  the optimal profile  $\alpha$  \eqref{optimal profile alpha}.
We define 
\[S_\e(x)=\eta_\delta(x) \alpha\(\tfrac{\dd_\Sigma(x,0)}\e\)+(1-\eta_\delta(x))\tfrac {\dist_\mm}2 \(\1_{\O^+_0}(x)-\1_{\O^-_0}(x)\).\label{se def}\]
It is easy to verify that $S_\e$ is a smooth function (as   the discontinuity caused by $\1_{\O^\pm_0}$ is cut off by $\eta_\delta$). We write
\[S_\e(x)= \alpha\(\tfrac {\dd_\Sigma(x,0)}\e\)-\hat{S}_\e(x),\]
where $\hat{S}_\e$ is the tail term 
\[\hat{S}_\e(x)=(1-\eta_\delta(x))\left[\alpha\(\tfrac {\dd_\Sigma(x,0)}\e\)-\tfrac {\dist_\mm}2 \(\1_{\O^+_0}(x)-\1_{\O^-_0}(x)\)\right].\label{hat s def}\]
By Rademacher's theorem,
$\dd_\Sigma(x,0)$ is Lipschitz continuous in  $\O$,  and   $|\nabla \dd_\Sigma(x,0)|\leq 1$ a.e. in $\O$. This combined with   \eqref{exp alpha}  
yields
\[\|\hat{S}_\e\|_{L^{\infty}(\O)}+\|\nabla \hat{S}_\e\|_{L^{\infty}(\O)}\leq C e^{-C/\e}.\label{exp hatS}\]
and thus
\begin{subequations}
\begin{align}
S_\e(x)&= \alpha\(\tfrac {\dd_\Sigma(x,0)}\e\)+O(e^{-C/\e})\text{ in } \O,\label{def Se1}\\
\nabla S_\e(x)&= \tfrac {\nabla \dd_\Sigma(x,0)}\e\alpha'+O(e^{-C/\e})\quad \text{a.e.  in } \O,\label{def Se2}\\
S_\e(x)&\xrightarrow{\e\to 0} \tfrac {\dist_\mm}2 \(\1_{\O^+_0}(x)-\1_{\O^-_0}(x)\)\text{ a.e.  in }\O.
\end{align}
\end{subequations}
Using \eqref{u in extension} and \eqref{se def}, we define $\uu_\e^{in}$ by 
\begin{align}\label{uu initial}
\uu_\e^{in}(x)=\tfrac {\uu_0^+(x)+\uu_0^-(x)}2 +S_\e(x)\tfrac{\uu_0^+(x)-\uu_0^-(x)}{\dist_\mm}.
\end{align}
  We claim  \eqref{u coincide} holds. Indeed in the domains $\O_0^\pm\backslash B_{2\delta}(\Sigma_0)$, we have $\eta_\delta=0$ and thus
  \[\uu_\e^{in}\overset{\eqref{se def}}=\tfrac {\uu_0^+ +\uu_0^-}2+\tfrac {\dist_\mm}2 (\1_{\O^+_0} -\1_{\O^-_0})\tfrac{\uu_0^+-\uu_0^-}{\dist_\mm}=\sum_\pm \uu_0^\pm\1_{\O^\pm_0}\quad \text{ in } \O_0^\pm\backslash B_{2\delta}(\Sigma_0). \]
  This combined with   \eqref{psi fix} yields
   \[\uu_\e^{in}(x)  =\uu^{in}_\pm\circ \Psi_\delta^\pm(x) =\uu^{in}_\pm(x),\quad \forall x\in \O_0^\pm\backslash B_{2\delta}(\Sigma_0),\]
   and \eqref{u coincide} is proved.
   
Now we turn to the proof  of  \eqref{u cali}.
Substituting   \eqref{def Se1} and \eqref{def Se2} into \eqref{uu initial}, we obtain 
\begin{align}\label{grad uuin}
\nabla\uu_\e^{in}&=\tfrac {\nabla\uu_0^++\nabla\uu_0^-}2 +\tfrac{\nabla\dd_\Sigma}\e\alpha'(\tfrac{\dd_\Sigma}{\e})  \tfrac {\uu_0^+-\uu_0^-}{\dist_\mm}\nonumber\\
&+\alpha(\tfrac{\dd_\Sigma}{\e}) \tfrac {\nabla\uu_0^+ -\nabla\uu_0^-}{\dist_\mm}+ O(e^{-C/\e})\(|\nabla\uu_0^+|+|\nabla\uu_0^-|+1\)\qquad \text{a.e. in }\O.
\end{align}
 
\subsection{Proof of \eqref{u cali}: Estimate near $\Sigma_0$}
By \eqref{initial mp}  
\[(\uu_0^+(x)-\uu_0^-(x))\perp_{\R^n} T_{\uu_0^\pm(x)}\mm_\pm,\quad \forall x\in B_\delta(\Sigma_0).\] As $\uu_0^\pm$ are mappings  into $\mm_\pm$ respectively, we have $\nabla \uu_0^\pm(x)\in T_{\uu^\pm(x)}\mm_\pm$. So   
\[(\uu_0^+(x)-\uu_0^-(x))\cdot \p_{x_i} \uu_0^\pm(x),\quad a.e. ~ x\in B_\delta(\Sigma_0),\quad 1\leq i\leq d.\label{mc orth gradi}\] 
The square of the second term on the right-hand side of \eqref{grad uuin} is
\[\left|\tfrac{\nabla \dd_\Sigma}\e\alpha'(\tfrac{\dd_\Sigma}{\e})  \tfrac {\uu_0^+-\uu_0^-}{\dist_\mm}\right|^2\overset{\eqref{initial mp}}=\e^{-2} \(\alpha'(\tfrac{\dd_\Sigma}{\e})\)^2\text{ in }B_{\delta}(\Sigma_0).\]
This together with  \eqref{mc orth gradi} enables us to compute  the square of \eqref{grad uuin} by 
 \begin{align}\label{grad uuin1}
|\nabla\uu_\e^{in}|^2=&  \e^{-2} \(\alpha'(\tfrac{\dd_\Sigma}{\e})\)^2   +\left|\tfrac{\nabla\uu_0^++\nabla\uu_0^-}2+\alpha(\tfrac{\dd_\Sigma}{\e}) \tfrac{\nabla\uu_0^+ -\nabla\uu_0^-}{\dist_\mm}\right|^2\nonumber\\
&+ O(e^{-C/\e})\(|\nabla\uu_0^+|^2+|\nabla\uu_0^-|^2+1\)\qquad \text{ in }B_{\delta}(\Sigma_0).
\end{align}
By \eqref{def Se1} and \eqref{uu initial}, we have 
\[F(\uu_\e^{in})=F\(\tfrac {\uu_0^++\uu_0^-}2 +\alpha(\tfrac{\dd_\Sigma}{\e})\tfrac {\uu_0^+-\uu_0^-}{\dist_\mm}\)+O(e^{-C/\e})\qquad \text{ in } \O.\label{F init expand1}\]
To compute  the RHS of \eqref{F init expand1}, we first deduce from \eqref{optimal profile alpha} that 
\[\bgamma(s):\R\mapsto \tfrac {\uu_0^++\uu_0^-}2 +\alpha\(s\)\tfrac {\uu_0^+-\uu_0^-}{\dist_\mm}\]
parametrizes   the line segment $\overline{\uu_0^-\uu_0^+}$ with  $\gamma(0)=\tfrac {\uu_0^++\uu_0^-}2$  the middle point.
For $x\in B_\delta(\Sigma_0)\cap \O_0^+$, we have $\alpha(\tfrac{\dd_\Sigma(x)}{\e})>0$ and by \eqref{odd increase}, the distance from $\bgamma(\frac{\dd_\Sigma(x)}\e)$ to $\mm$ equals to  the distance  between $\bgamma(\frac{\dd_\Sigma(x)}\e)$ and $\uu_0^+\in\mm_+$. So
\[\left|\dd_\mm\(\tfrac {\uu_0^++\uu_0^-}2 +\alpha(\tfrac{\dd_\Sigma}{\e})\tfrac {\uu_0^+-\uu_0^-}{\dist_\mm}\)\right|=\left|\tfrac{\dist_\mm}2-\alpha(\tfrac{\dd_\Sigma}{\e})\right|  \left| \tfrac {\uu_0^+-\uu_0^-}{\dist_\mm}\right|
 \overset{\eqref{initial mp}}=\left|\tfrac{\dist_\mm}2-\alpha(\tfrac{\dd_\Sigma}{\e})\right| \label{dist repre}\]
on  $ B_\delta(\Sigma_0)\cap \O_0^+$, and thus
\begin{align}\label{dist repre1}
&F\(\tfrac {\uu_0^++\uu_0^-}2 +\alpha(\tfrac{\dd_\Sigma}{\e})\tfrac {\uu_0^+-\uu_0^-}{\dist_\mm}\)\nonumber\\
\overset{\eqref{bulk potential},\eqref{dist repre}}=&f\(\left|\tfrac{\dist_\mm}2-\alpha(\tfrac{\dd_\Sigma}{\e})\right|^2\)
\overset{\eqref{centralized  potential}}=\tilde{F}\(\alpha(\tfrac{\dd_\Sigma}{\e})\).
\end{align}
Similar calculation leads to  the case when $x\in B_\delta(\Sigma_0)\cap \O_0^-$, and altogether   we have (from \eqref{F init expand1} and \eqref{dist repre1})  that 
\[F(\uu_\e^{in})=\tilde{F}\(\alpha(\tfrac{\dd_\Sigma}{\e})\)+O(e^{-C/\e})\qquad \text{ in } B_\delta(\Sigma_t).\label{F init expand2}\]
Now we   compute  $\bxi\cdot\nabla( \dd_F \circ \uu_\e^{in})$:
\begin{align}\label{F init expand3}
 -\int_\O \eta_\delta \bxi\cdot\nabla(\dd_F \circ \uu_\e^{in}) &\overset{\eqref{cut-off eta delta2}}= \int_\O \div (\eta_\delta \bxi ) \dd_F \circ \uu_\e^{in} \nonumber\\&
 \overset{\eqref{uu initial}}=  \int_\O \div (\eta_\delta\bxi ) \dd_F  \(\tfrac{\uu_0^+ +\uu_0^- }2 +S_\e \tfrac{\uu_0^+ -\uu_0^- }{\dist_\mm}\).
\end{align}
Using \eqref{def Se1} and the Lipschitz property of $\dd_F$, we can write 
\begin{align}\label{F init expand4}
-\int_\O \eta_\delta\bxi\cdot\nabla(\dd_F \circ \uu_\e^{in}) =& \int_\O  \div( \eta_\delta \bxi ) \dd_F  \(\tfrac{\uu_0^+ +\uu_0^- }2 +\alpha(\tfrac{\dd_\Sigma}{\e})\tfrac{\uu_0^+ -\uu_0^- }{\dist_\mm}\) +O(e^{-C/\e}).
\end{align}
To   evaluate   $\dd_F$ on  $B_\delta(\Sigma_t)$, we note that    a line segment inside a minimal connection will not  involve the 2nd case  defining $\dd_F$ (cf.   \eqref{quasidistance}). As a result, at  any $ x\in B_\delta(\Sigma_0)\cap \O_0^+$, since $\alpha(\tfrac{\dd_\Sigma}{\e})>0$ and $(\uu_0^+, \uu_0^-)$ is a   minimal pair (cf. \eqref{initial mp}), we have
\begin{align*}
  &\dd_F  \(\tfrac{\uu_0^+ +\uu_0^-}2 +\alpha(\tfrac{\dd_\Sigma}{\e})\tfrac{\uu_0^+ -\uu_0^-}{\dist_\mm}\)\\
 & \overset{\eqref{quasidistance}}=c_F-\int_0^{\dd_{\mm_+}\big(\tfrac{\uu_0^+ +\uu_0^-}2 +\alpha(\tfrac{\dd_\Sigma}{\e})\tfrac{\uu_0^+ -\uu_0^-}{\dist_\mm}\big)}\sqrt{2f(\lambda^2)}\, d\lambda\\
 & \overset{\eqref{dist repre}}=c_F-\int_0^{\frac{\dist_\mm}2-\alpha(\tfrac{\dd_\Sigma}{\e})}\sqrt{2f(\lambda^2)}\, d\lambda\\
& \overset{\eqref{centralized  potential}}=c_F-\int_{\alpha(\tfrac{\dd_\Sigma}{\e})}^{\frac{\dist_\mm}2}\sqrt{2\tilde{F}(\lambda)}\, d\lambda\qquad \forall x\in B_\delta(\Sigma_0)\cap \O_0^+.
\end{align*}
Similar calculation applies to  $  B_\delta(\Sigma_0)\cap \O_0^-$ and $\alpha(\tfrac{\dd_\Sigma}{\e})<0$:
\begin{align*}
  &\dd_F  \(\tfrac{\uu_0^+ +\uu_0^-}2 +\alpha(\tfrac{\dd_\Sigma}{\e})\tfrac{\uu_0^+ -\uu_0^-}{\dist_\mm}\)\\
 & \overset{\eqref{quasidistance}}= \int_0^{\dd_{\mm_-}\(\tfrac{\uu_0^+ +\uu_0^-}2 +\alpha(\tfrac{\dd_\Sigma}{\e})\tfrac{\uu_0^+ -\uu_0^-}{\dist_\mm}\)}\sqrt{2f(\lambda^2)}\, d\lambda\\
 & \overset{\eqref{linpanwang cf equ}}=\int_0^{\frac{\dist_\mm}2+\alpha(\tfrac{\dd_\Sigma}{\e})}\sqrt{2f(\lambda^2)}\, d\lambda\\
 & \overset{\eqref{centralized  potential}}=\int_{-\frac{\dist_\mm}2}^{\alpha(\tfrac{\dd_\Sigma}{\e})}\sqrt{2\tilde{F}(\lambda)}\, d\lambda\qquad  \forall x\in B_\delta(\Sigma_0)\cap \O_0^-.
\end{align*}
To summarize, we have
\[\label{longcase1}
  \dd_F  \(\tfrac{\uu_0^+ +\uu_0^-}2 +\alpha(\tfrac{\dd_\Sigma}{\e})\tfrac{\uu_0^+ -\uu_0^-}{\dist_\mm}\)=\left\{
   \begin{split}
  c_F- \int_{\alpha(\tfrac{\dd_\Sigma}{\e})}^{\frac{\dist_\mm}2}\sqrt{2\tilde{F}(\lambda)}\, d\lambda\qquad \forall x\in B_\delta(\Sigma_0)\cap \O_0^+,\\
   \int_{-\frac{\dist_\mm}2}^{\alpha(\tfrac{\dd_\Sigma}{\e})}\sqrt{2\tilde{F}(\lambda)}\, d\lambda
\qquad \forall x\in B_\delta(\Sigma_0)\cap \O_0^-.
   \end{split}
   \right.
\]
Recall from \eqref{cut-off eta delta2} that $\eta_\delta$ vanishes outside $B_\delta(\Sigma_0)$. So substituting \eqref{longcase1} into \eqref{F init expand4} and integrating by parts
 yield
 \begin{align}\label{F init expand5} 
\int_\O \eta_\delta\bxi\cdot\nabla(\dd_F \circ \uu_\e^{in}) =& \int_\O  \e^{-1} \eta_\delta\bxi \cdot \nabla\dd_\Sigma \alpha'(\tfrac{\dd_\Sigma}{\e}) \sqrt{2\tilde{F}\(\alpha(\tfrac{\dd_\Sigma}{\e})\)}\, dx +O(e^{-C/\e}).
\end{align}
This combined with \eqref{F init expand2} and \eqref{grad uuin1} yields
\begin{align}\label{grad uuin2}
&\int_\O \(\frac 12 |\nabla\uu_\e^{in}|^2+\frac{F(\uu_\e^{in})}{\e^2}- \frac 1 \e \bxi\cdot\nabla(\dd_F \circ \uu_\e^{in})\)\eta_\delta\nonumber\\
&= \int_\O  \(\e^{-2} \frac 12 \(\alpha'(\tfrac{\dd_\Sigma}{\e})\)^2 +\e^{-2}\tilde{F}\(\alpha(\tfrac{\dd_\Sigma}{\e})\) -\e^{-2} \bxi \cdot \nabla\dd_\Sigma \alpha'(\tfrac{\dd_\Sigma}{\e}) \sqrt{2\tilde{F}\(\alpha(\tfrac{\dd_\Sigma}{\e})\)} \)\eta_\delta\nonumber\\
&  +\int_\O \frac 12  \eta_\delta \left|\tfrac{\nabla\uu_0^++\nabla\uu_0^-}2+\alpha(\tfrac{\dd_\Sigma}{\e}) \tfrac{\nabla\uu_0^+ -\nabla\uu_0^-}{\dist_\mm}\right|^2\nonumber\\
&+O(e^{-C/\e})\int_\O   \(1+ |\nabla\uu_0^+|^2+|\nabla\uu_0^-|^2\)\eta_\delta.\end{align}
By \eqref{alpha'=} the  first term on the right-hand side above simplifies to 
\begin{align}
\int_\O  \eta_\delta\e^{-2}  (1- \bxi \cdot \nabla\dd_\Sigma) \(\alpha'(\tfrac{\dd_\Sigma}{\e})\)^2\overset{\eqref{def:xi}}=\int_\O O(1)\eta_\delta  \tfrac{\dd_\Sigma^2}{\e^2} \(\alpha'(\tfrac{\dd_\Sigma}{\e})\)^2\overset{\eqref{exp alpha}}=O(\e).
\end{align}
The above two equations together implies
\begin{align}\label{grad uuin3}
&\int_\O \(\frac 12 |\nabla \uu_\e^{in}|^2+\frac{F(\uu_\e^{in})}{\e^2}- \frac 1 \e \bxi\cdot\nabla(\dd_F \circ \uu_\e^{in})\)\eta_\delta\nonumber\\
&=\int_\O\frac 12  \eta_\delta \left|\tfrac{\nabla\uu_0^++\nabla\uu_0^-}2+\alpha(\tfrac{\dd_\Sigma}{\e}) \tfrac{\nabla\uu_0^+ -\nabla\uu_0^-}{\dist_\mm}\right|^2+O(\e)\int_\O   \(1+ |\nabla\uu_0^+|^2+|\nabla\uu_0^-|^2\)\eta_\delta.\end{align}

\subsection{Proof of \eqref{u cali}: Estimates away from $\Sigma_0$.}
 Using \eqref{exp alpha}, we have 
\[|\alpha'(\tfrac{\dd_\Sigma}{\e})|+\left|\alpha(\tfrac{\dd_\Sigma}{\e})- \tfrac{\dist_\mm}2 (\1_{\O_0^+}-\1_{\O_0^-})  \right|\leq Ce^{-C/\e}\text{   in }\O\backslash B_{\delta/2}(\Sigma_0).\label{exp alpha1}\]  Applying  the above estimates to \eqref{grad uuin} yields
\begin{align}\label{grad uuin4}
\nabla\uu_\e^{in} &= \tfrac {\nabla\uu_0^++\nabla\uu_0^-}2   +\tfrac{\dist_\mm}2 (\1_{\O_0^+}-\1_{\O_0^-})\tfrac {\nabla\uu_0^+ -\nabla\uu_0^-}{\dist_\mm}\nonumber\\
&\qquad  + O(e^{-C/\e})\(1+|\nabla\uu_0^+|\1_{\O_0^+}+|\nabla\uu_0^-|\1_{\O_0^-}\)\nonumber\\
&=\nabla\uu_0^+ \1_{\O_0^+}+\nabla\uu_0^- \1_{\O_0^-}\nonumber\\
&\qquad   + O(e^{-C/\e})\(1+|\nabla\uu_0^+|\1_{\O_0^+}+|\nabla\uu_0^-|\1_{\O_0^-}\) 
\quad \text{a.e. in }\O\backslash B_{\delta/2}(\Sigma_0).
\end{align}
By \eqref{cut-off eta delta2}
 the function  $(1-\eta_\delta)$ vanishes on $B_{\delta/2}(\Sigma_0).$ So multiplying this function to \eqref{grad uuin4} yields 
 \begin{align}\label{grad uuin5}
(1-\eta_\delta)|\nabla\uu_\e^{in}|^2 &=(1-\eta_\delta) \sum_\pm |\nabla\uu_0^\pm |^2\1_{\O_0^\pm}   \nonumber\\
&\quad + O(e^{-C/\e})\(1+\sum_\pm |\nabla\uu_0^\pm |^2\1_{\O_0^\pm}\)\qquad \text{a.e. in }\O.\end{align}
Similar but easier calculation of \eqref{F init expand1} yields
 \begin{align}\label{grad uuin6}
(1-\eta_\delta ) F(\uu_\e^{in}) &=  O(e^{-C/\e}) \qquad \text{  in }\O.\end{align}
By \eqref{uu initial}, the Lipschitz continuity of $\dd_F$ and \eqref{def Se1},
\begin{align}\label{dfexpand1}
\dd_F\circ\uu_\e^{in}& =\dd_F\(\tfrac {\uu_0^+ +\uu_0^- }2 +S_\e \tfrac{\uu_0^+ -\uu_0^- }{\dist_\mm}\)\nonumber\\
& =\dd_F\(\tfrac {\uu_0^+ +\uu_0^- }2 +\alpha \tfrac{\uu_0^+ -\uu_0^- }{\dist_\mm}\)+O(e^{-C/\e})\text{ in }\Omega.
\end{align}
This combined with \eqref{exp alpha1} implies that  
\begin{align*}
\dd_F\circ\uu_\e^{in}& =\dd_F\(\tfrac {\uu_0^+ +\uu_0^- }2 +\tfrac{\dist_\mm}2 (\1_{\O_0^+}-\1_{\O_0^-}) \tfrac{\uu_0^+ -\uu_0^- }{\dist_\mm}\)+O(e^{-C/\e})\nonumber\\
&=\dd_F\(\uu_0^+ \1_{\O_0^+}+ \uu_0^- \1_{\O_0^-}\)+O(e^{-C/\e})\qquad \text{ in }\O\backslash B_{\delta/2}(\Sigma_0).
\end{align*}
As $\uu_0^\pm$ are mappings  into $\mm_\pm$ (cf. \eqref{u in extension}), we have from \eqref{quasidistance} that 
\begin{align}\label{grad uuin7}
\dd_F\circ\uu_\e^{in} =c_F \1_{\O_0^+}+O(e^{-C/\e})\quad \text{ in }\O\backslash B_{\delta/2}(\Sigma_0).\end{align}
Since  $(1-\eta_\delta)$ vanishes on $B_{\delta/2}(\Sigma_0)$,
\begin{align}\label{grad uuin8}
\div\((1-\eta_\delta)\bxi\)\dd_F\circ \uu_\e^{in}  
\overset{\eqref{quasidistance}}=\div\((1-\eta_\delta)\bxi\) c_F \1_{\O_0^+} +O(e^{-C/\e}).
\end{align}
So we have 
\begin{align}\label{grad uuin9}
-&\int_\O  (1-\eta_\delta)\bxi\cdot\nabla \(\dd_F\circ \uu_\e^{in}\)  \nonumber\\
&\overset{\eqref{bc n and H}}=\int_\O \div\((1-\eta_\delta)\bxi\)\dd_F\circ\uu_\e^{in} \nonumber\\
&\overset{\eqref{grad uuin8}}= \int_{\O_0^+}\div\((1-\eta_\delta)\bxi\) c_F  +O(e^{-C/\e}) \nonumber\\
&\overset{\eqref{grad uuin8}}=  c_F\int_{\Sigma_0} (1-\eta_\delta)\bxi\cdot\nu \,d\mathcal{H}^{d-1}  +O(e^{-C/\e})=O(e^{-C/\e}).
\end{align}
Putting \eqref{grad uuin5}, \eqref{grad uuin6} and \eqref{grad uuin9} together, we obtain
\begin{align}\label{grad uuin10}
&\int_\O \(\frac 12 |\nabla\uu_\e^{in}|^2+\frac{F(\uu_\e^{in})}{\e^2}- \frac 1 \e \bxi\cdot\nabla(\dd_F \circ \uu_\e^{in})\)(1-\eta_\delta)\nonumber\\
&=\sum_\pm\int_{\O_0^\pm} \frac{1-\eta_\delta}2 |\nabla\uu_0^\pm|^2     +O(e^{-C/\e})+O(e^{-C/\e})\sum_\pm\int_{\O_0^\pm}    |\nabla\uu_0^\pm|^2.\end{align}
Combining this with \eqref{grad uuin3}  leads to
\begin{align}\label{grad uuin11}
&\int_\O \(\frac 12 |\nabla\uu_\e^{in}|^2+\frac{F(\uu_\e^{in})}{\e^2}- \frac 1 \e \bxi\cdot\nabla(\dd_F \circ \uu_\e^{in})\)  \nonumber\\
&=\int_\O\frac 12  \eta_\delta \left|\tfrac{\nabla\uu_0^++\nabla\uu_0^-}2+\alpha(\tfrac{\dd_\Sigma}{\e}) \tfrac{\nabla\uu_0^+ -\nabla\uu_0^-}{\dist_\mm}\right|^2+\sum_\pm\int_{\O_0^\pm} \frac{1-\eta_\delta}2 |\nabla\uu_0^\pm|^2+O(\e) .\end{align}
This leads to \eqref{u cali}.
Using  $\alpha(\tfrac{\dd_\Sigma}{\e})\xrightarrow{\e\to 0} \tfrac{\dist_\mm}2 (\1_{\O_0^+}-\1_{\O_0^-})$ a.e. in $\O$, we can apply the dominated convergence to the first integral on the RHS of \eqref{grad uuin11} and get
\begin{align}\label{grad uuin12}
 \lim_{\e\to 0}\int \(\frac 12 |\nabla\uu_\e^{in}|^2+\frac{F(\uu_\e^{in})}{\e^2}- \frac 1 \e \bxi\cdot\nabla(\dd_F \circ \uu_\e^{in})\)   
 = \sum_\pm \int_{\O_0^\pm}    |\nabla\uu_0^\pm|^2 .\end{align}
 
 \subsection{Proof of \eqref{u bulk}.} Recall  from \eqref{gronwall2new} that 
\begin{align}
 B[\uu_\e^{in}  | \Sigma_0] := &\int_\O   \Big(c_F\chi-c_F+ 2\(\dd_F \circ \uu_\e^{in}-c_F\)^- \Big)\eta\circ \dd_\Sigma  \, dx\nonumber\\
 &+\int_\O  \( \dd_F \circ \uu_\e^{in}-c_F\)^+|\eta\circ\dd_\Sigma| \, dx,\label{gronwall2newint}
\end{align}
 where $\chi =\1_{\O_0^+}-\1_{\O_0^-}$,  and $\eta$ is defined by \eqref{truncation eta}. 
 We also recall from   \eqref{linpanwang 2.2} that 
$c_F =2 \int_{0}^{\tfrac{\dist_{\mm}}2} \sqrt{2\widetilde{F}(\lambda)} d \lambda.$
Concerning the first   integral of \eqref{gronwall2newint},
we first deduce from  \eqref{grad uuin7} that its integrand   is of order $O(e^{-C/\e})$ on $\O\backslash B_{\delta/2}(\Sigma_0)$. So it suffices  to estimate the integral in  the transitional region $B_{\delta}(\Sigma_0)$: by \eqref{dfexpand1}, \eqref{longcase1} and   a change of variable
\begin{align*}
&\int_{\O_0^+\cap B_{\delta}(\Sigma_0)}   \Big(c_F\chi-c_F+ 2\(\dd_F \circ \uu_\e^{in}-c_F\)^- \Big)\eta\circ \dd_\Sigma  \, dx\nonumber\\
 =& ~2\e \int_{\O_0^+\cap B_{\delta}(\Sigma_0)} \(  \int_{\alpha(\tfrac{\dd_\Sigma}{\e})}^{\frac{\dist_\mm}2}\sqrt{2\tilde{F}(\lambda)}\, d\lambda\)\frac{\eta\circ \dd_\Sigma}\e  \, dx+O(e^{-C/\e}) \leq C\e.
\end{align*}
  In a similar way
\begin{align*}
&\int_{\O_0^-\cap B_{\delta}(\Sigma_0)}   \Big(c_F\chi-c_F+ 2\(\dd_F \circ \uu_\e^{in}-c_F\)^- \Big)\eta\circ \dd_\Sigma  \, dx\nonumber\\
 =&~  -2\e\int_{\O_0^-\cap B_{\delta}(\Sigma_0)} \(   \int^{\alpha(\tfrac{\dd_\Sigma}{\e})}_{-\frac{\dist_\mm}2}\sqrt{2\tilde{F}(\lambda)}\, d\lambda\)\frac{\eta\circ \dd_\Sigma}\e  \, dx+O(e^{-C/\e}) \leq C\e.
\end{align*}
Similar calculation shows that the second integral of \eqref{gronwall2newint} is of order $O(\e)$.  
All together we finish the proof of   \eqref{u bulk}.


 \ \

 \noindent{\it Acknowledgements}.   
  Y. Liu is partially supported by NSF of China under Grant  11971314.
We would like to thank professor Wei Wang for sharing with us the  notes \cite{wangnote} and stimulating discussions. 

 \appendix
  
 \section{Proof of Proposition \ref{gronwallprop}}\label{appendix}

As we shall not integrate the time variable $t$ throughout this section,   we shall abbreviate the spatial integration $\int_\O$ by $\int$ and sometimes we omit the $\,dx$. 

\begin{lemma}\label{lemma:expansion 1} The following identity holds 	\begin{align}
	&\int \nabla \HH: (\bxi  \otimes\nn_\e)\left|\nabla \psi_\e\right| \, d x
	-\int (\nabla \cdot \HH) \, \bxi   \cdot \nabla \psi_\e \, d x\nonumber\\
	=&\int \nabla \HH: (\bxi -\nn_\e ) \otimes\nn_\e\left|\nabla \psi_\e\right|\, d x+\int \HH_\e\cdot\HH |\nabla \uu_\e |\, d x \nonumber\\
	&+\int\nabla\cdot\HH  \( \frac{\e }2 |\nabla \uu_\e |^2  +\frac{1}\e  F (\uu_\e ) -|\nabla \psi_\e| \)\, d x +\int\nabla\cdot\HH ( |\nabla\psi_\e|-\bxi \cdot\nabla\psi_\e)\, d x\nonumber\\
	&-\sum_{i,j=1}^d\int    (\nabla \HH)_{ij} \,\e  \(\p_i \uu_\e  \cdot \p_j \uu_\e    \)\, d x +\int \nabla \HH: (\nn_\e  \otimes\nn_\e)\left|\nabla \psi_\e\right|\, d x.\label{expansion1}
	\end{align}
\end{lemma}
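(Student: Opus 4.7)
The identity is essentially an algebraic rearrangement coupled with one nontrivial integration by parts. My plan is to split the claim into three pieces and verify each independently.

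First, I would observe that the left-hand side of \eqref{expansion1} decomposes automatically once we write $\bxi = (\bxi - \nn_\e) + \nn_\e$. By linearity of the Frobenius product,
\[
\int \nabla \HH:(\bxi\otimes\nn_\e)|\nabla\psi_\e|\,dx = \int \nabla \HH:(\bxi-\nn_\e)\otimes\nn_\e\,|\nabla\psi_\e|\,dx+\int \nabla \HH:(\nn_\e\otimes\nn_\e)|\nabla\psi_\e|\,dx,
\]
which is exactly the first and last terms on the right. Next, I would bookkeep the $|\nabla\psi_\e|$-contributions in the third and fourth terms on the right: the combination $(\nabla\cdot\HH)(-|\nabla\psi_\e|) + (\nabla\cdot\HH)(|\nabla\psi_\e|-\bxi\cdot\nabla\psi_\e)$ telescopes to $-(\nabla\cdot\HH)\bxi\cdot\nabla\psi_\e$, which accounts for the second term on the left.

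After these purely algebraic moves, what remains is to verify the virial-type identity
\[
\int \HH_\e\cdot\HH |\nabla \uu_\e|\,dx+\int (\nabla\cdot\HH)\Bigl(\tfrac{\e}{2}|\nabla\uu_\e|^2+\tfrac{1}{\e}F(\uu_\e)\Bigr)\,dx = \int \e\,(\nabla\HH)_{ij}(\p_i\uu_\e\cdot\p_j\uu_\e)\,dx.
\]
This is the content of the lemma. I would prove it directly from the definition \eqref{mean curvature app}, which gives $\HH_\e|\nabla\uu_\e| = -(\e\Delta\uu_\e - \e^{-1}\p F(\uu_\e))\cdot\nabla\uu_\e$, so that
\[
\HH_\e\cdot\HH |\nabla\uu_\e| = -\e H_j(\Delta\uu_\e\cdot\p_j\uu_\e) + \e^{-1} H_j\,\p_j F(\uu_\e),
\]
using the chain rule $\p F(\uu_\e)\cdot\p_j\uu_\e = \p_j(F(\uu_\e))$. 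The potential term is handled by one integration by parts, yielding $-\int \e^{-1}(\nabla\cdot\HH)F(\uu_\e)$; the boundary contribution vanishes because $\HH=0$ on $\p\O$ by \eqref{bc n and H}. For the Laplacian term I integrate by parts once in $x_i$ to turn $-H_j\p_i\p_i\uu_\e\cdot\p_j\uu_\e$ into $\p_i(H_j\p_j\uu_\e)\cdot\p_i\uu_\e$, and then split the result into a $(\p_i H_j)(\p_j\uu_\e\cdot\p_i\uu_\e)$ contribution (which becomes the $(\nabla\HH)_{ij}$ stress term after relabeling indices) and $\tfrac{1}{2}H_j\p_j|\nabla\uu_\e|^2$, on which a final integration by parts produces $-\tfrac{1}{2}(\nabla\cdot\HH)|\nabla\uu_\e|^2$.

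I do not anticipate any serious obstacle: everything is justified because of the smooth cutoff in \eqref{def:H} making $\HH$ compactly supported inside $\O$, so boundary terms drop out at every stage. The only point requiring attention is the index convention $(\nabla\HH)_{ij}=\p_j H_i$ recorded after \eqref{xi der}, which must be respected when converting the symmetric expression $(\p_i H_j)(\p_i\uu_\e\cdot\p_j\uu_\e)$ into the form $(\nabla\HH)_{ij}(\p_i\uu_\e\cdot\p_j\uu_\e)$ that appears on the right-hand side of the claim. Once this is done, collecting the three pieces above yields \eqref{expansion1}.
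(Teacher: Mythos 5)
Your proposal is correct and follows essentially the same route as the paper: the paper states the identity $\nabla\cdot T_\e = \HH_\e|\nabla\uu_\e|$ for the energy stress tensor $(T_\e)_{ij}=\bigl(\tfrac{\e}2|\nabla\uu_\e|^2+\tfrac1\e F(\uu_\e)\bigr)\delta_{ij}-\e\,\p_i\uu_\e\cdot\p_j\uu_\e$, tests it against $\HH$, and integrates by parts, then ``adds zero'' to obtain \eqref{expansion1}. Your direct integrations by parts on the Laplacian and potential terms reproduce exactly this stress-tensor computation without naming $T_\e$, and your algebraic splitting/telescoping matches the paper's ``adding zero'' step, so the two arguments are the same calculation in a slightly different order.
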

\begin{proof}
	We
	introduce the energy stress tensor $(T_\e)_{ij}= \( \frac{\e }2 |\nabla \uu_\e |^2 +\frac{1}{\e } F (\uu_\e )  \) \delta_{ij} - \e  \p_i \uu_\e \cdot \p_j \uu_\e$.
By \eqref{mean curvature app},
	we have the identity
	$\nabla \cdot T_\e
	   =\HH_\e |\nabla \uu_\e |.$
	Testing this identity by $\HH$, integrating by parts and using \eqref{bc n and H}, we obtain
	\begin{equation*}
	\begin{split}
	&\int \HH_\e\cdot\HH |\nabla \uu_\e |\,d x  =- \int \nabla\HH \colon T_\e \,d x\\
	&=-  \int\nabla\cdot\HH  \( \frac{\e }2 |\nabla \uu_\e |^2 +\frac{1}{\e } F (\uu_\e )  \)\, dx+ \sum_{i,j}\int  (\nabla\HH)_{ij} \, \e \(\p_i \uu_\e \cdot \p_j \uu_\e \) d x.
		\end{split}
	\end{equation*}
	So adding zero leads to
	\begin{equation*}
	\begin{split}
	&\int \nabla \HH: \nn_\e  \otimes\nn_\e\left|\nabla \psi_\e\right|d x\\
	&=\int \HH_\e\cdot\HH |\nabla \uu_\e |\, dx+\int\nabla\cdot\HH  \( \frac{\e }2 |\nabla \uu_\e |^2 +\frac{1}{\e } F (\uu_\e ) -|\nabla \psi_\e| \)\,d x+\int\nabla\cdot\HH |\nabla\psi_\e|\,d x\\
	&-\sum_{i,j}\int  (\nabla\HH)_{ij}\,\e \(\p_i \uu_\e \cdot \p_j \uu_\e    \) d x+\int (\nabla \HH): (\nn_\e  \otimes\nn_\e)\left|\nabla \psi_\e\right|d x.
	\end{split}
	\end{equation*}
	This easily implies   \eqref{expansion1}.
\end{proof}  
The second lemma gives the expansion of the time derivative of \eqref{entropy}.
 \begin{lemma}\label{lemma exact dt relative entropy}
	Under the assumptions of Theorem \ref{main thm}, the following identity holds
\begin{subequations}\label{time deri 4}
		\begin{align}
		\frac{d}{d t} E& [\uu_\e   | \Sigma ]
		+\frac 1{2\e }\int \(\e ^2 \left| \p_t \uu_\e    \right|^2-|\HH_\e |^2\)\,dx\nonumber\\
		&+\frac 1{2\e }\int \Big| \e  \p_t \uu_\e    -(\nabla \cdot \bxi ) \p  \dd_F   (\uu_\e  )  \Big|^2d x
		+\frac 1{2\e }\int \Big| \HH_\e -\e |\nabla \uu_\e  | \HH \Big|^2\,d x\nonumber \\
		=&\frac 1{2\e } \int \Big| (\nabla \cdot \bxi ) |\p  \dd_F   (\uu_\e  )|\nn_\e  +\e  |\Pi_{\uu_\e  } \nabla \uu_\e  | \HH\Big|^2\,d x\label{tail1}
		\\&+\frac \e {2} \int |\HH|^2\(|\nabla \uu_\e  |^2-|\Pi_{\uu_\e  }\nabla \uu_\e  |^2\)\,d x
		-\int \nabla \HH\cdot (\bxi -\nn_\e )^{\otimes 2}\left|\nabla \psi_\e\right|\,d x\label{tail2}\\
		&   +\int \(\nabla\cdot\HH\)  \( \frac{\e }2 |\nabla \uu_\e  |^2 +\frac{1}\e  F  (\uu_\e  ) -|\nabla \psi_\e | \)\,d x\label{tail4}\\
		&+\int\(\nabla\cdot\HH\)  \(1-\bxi \cdot \nn_\e \)|\nabla\psi_\e |\, d x+ \int (J_\e^1+J_\e^2)\, d x,\label{tail3}
		\end{align}
	\end{subequations}
	where $J_\e^1, J_\e^2$ are given by 
	\begin{align}
	  J_\e^1
	:=&  \nabla \HH: \nn_\e  \otimes\nn_\e\(|\nabla \psi_\e |-\e  |\nabla \uu_\e  |^2\)\nonumber\\
	&+\e    \nabla \HH:(\nn_\e \otimes \nn_\e )\(
	|\nabla \uu_\e  |^2-|\Pi_{\uu_\e  } \nabla \uu_\e  |^2\)  \nonumber\\
	&-\sum_{i,j}\e    (\nabla \HH)_{ij}    (\p_i \uu_\e  -\Pi_{\uu_\e  } \p_i \uu_\e  )\cdot(\p_j \uu_\e  -\Pi_{\uu_\e  } \p_j \uu_\e  ), \,  \label{J1}\\
	J_\e^2:= &-  \(\p_t  \bxi +\left(\HH \cdot \nabla\right) \bxi +\left(\nabla \HH\right)^{\mathsf{T}} \bxi \)\cdot \nabla \psi_\e. \label{J2}
	\end{align}
\end{lemma}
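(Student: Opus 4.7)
The plan is to differentiate $E_\e[\uu_\e|\Sigma](t)$ directly in time and rearrange the resulting terms using the gradient-flow structure \eqref{Ginzburg-Landau}, the geometric identities \eqref{xi der}, and two completions of squares, with Lemma \ref{lemma:expansion 1} performing the final reorganization of the $\nabla\HH$ contractions. First, the dissipation law \eqref{dissipation} contributes $-\int\e|\p_t\uu_\e|^2$ from the bulk Ginzburg--Landau energy. For the calibration part $-\int\bxi\cdot\nabla\psi_\e$, I would use the generalized chain rule \eqref{ADM chain rule} to obtain $\p_t\psi_\e=\p_t\uu_\e\cdot\p\dd_F(\uu_\e)$, integrate $\int\bxi\cdot\nabla\p_t\psi_\e$ by parts (with the boundary term killed by \eqref{bc n and H}) to produce $-\int(\nabla\cdot\bxi)(\p_t\uu_\e\cdot\p\dd_F(\uu_\e))$, and apply the transport identity \eqref{xi der1} to rewrite $\p_t\bxi=-(\HH\cdot\nabla)\bxi-(\nabla\HH)^{\mathsf{T}}\bxi$; the discrepancy between these two representations is exactly $J_\e^2$, which in fact vanishes identically since \eqref{xi der1} holds in $B_{\delta_0}(\Sigma_t)$ and $\bxi$ vanishes outside.

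Next I would complete two squares. Applying the elementary identity $-\e|a|^2+a\cdot b=-\tfrac{1}{2\e}|\e a-b|^2+\tfrac{|b|^2}{2\e}-\tfrac{\e}{2}|a|^2$ with $a=\p_t\uu_\e$ and $b=(\nabla\cdot\bxi)\p\dd_F(\uu_\e)$ produces the first square $\tfrac{1}{2\e}\int|\e\p_t\uu_\e-(\nabla\cdot\bxi)\p\dd_F(\uu_\e)|^2$ and leaves behind $\tfrac{1}{2\e}\int(\nabla\cdot\bxi)^2|\p\dd_F(\uu_\e)|^2$ together with $-\tfrac{\e}{2}\int|\p_t\uu_\e|^2$. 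Using $\HH_\e=-\e\p_t\uu_\e\cdot\nabla\uu_\e/|\nabla\uu_\e|$ (from \eqref{mean curvature app} together with \eqref{Ginzburg-Landau}, whence $|\HH_\e|\le\e|\p_t\uu_\e|$), I split $\tfrac{\e}{2}|\p_t\uu_\e|^2=\tfrac{1}{2\e}(\e^2|\p_t\uu_\e|^2-|\HH_\e|^2)+\tfrac{1}{2\e}|\HH_\e|^2$ and absorb $\tfrac{1}{2\e}|\HH_\e|^2$ into the second square $\tfrac{1}{2\e}\bigl|\HH_\e-\e|\nabla\uu_\e|\HH\bigr|^2$, paying the cross-term $-\int\HH_\e\cdot\HH\,|\nabla\uu_\e|$ and the tail $\tfrac{\e}{2}\int|\nabla\uu_\e|^2|\HH|^2$. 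This cross-term is precisely what Lemma \ref{lemma:expansion 1} supplies: the surviving integral $\int((\HH\cdot\nabla)\bxi+(\nabla\HH)^{\mathsf{T}}\bxi)\cdot\nabla\psi_\e$ is brought to the LHS of \eqref{expansion1} by recognizing $(\nabla\HH)^{\mathsf{T}}\bxi\cdot\nabla\psi_\e=\nabla\HH:(\bxi\otimes\nn_\e)|\nabla\psi_\e|$ via $\nabla\psi_\e=|\nabla\psi_\e|\nn_\e$ (see \eqref{normal diff}), together with an integration by parts of $(\HH\cdot\nabla)\bxi\cdot\nabla\psi_\e$ in $\HH$ that produces the companion ingredient $-\int(\nabla\cdot\HH)(\bxi\cdot\nabla\psi_\e)$.

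To close the identity I expand the remaining square \eqref{tail1}, namely $\tfrac{1}{2\e}\bigl|(\nabla\cdot\bxi)|\p\dd_F(\uu_\e)|\nn_\e+\e|\Pi_{\uu_\e}\nabla\uu_\e|\HH\bigr|^2$. Using \eqref{projectionnorm}, its diagonal terms reproduce $\tfrac{1}{2\e}(\nabla\cdot\bxi)^2|\p\dd_F(\uu_\e)|^2$ and $\tfrac{\e}{2}|\Pi_{\uu_\e}\nabla\uu_\e|^2|\HH|^2$, while its cross piece equals $(\nabla\cdot\bxi)\HH\cdot\nabla\psi_\e$, which matches the leftover contribution from the previous step; together with \eqref{tail2}, the discrepancy $\tfrac{\e}{2}|\HH|^2(|\nabla\uu_\e|^2-|\Pi_{\uu_\e}\nabla\uu_\e|^2)$ fills the gap between $|\nabla\uu_\e|^2$ and $|\Pi_{\uu_\e}\nabla\uu_\e|^2$. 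The remainder terms generated by Lemma \ref{lemma:expansion 1} are exactly \eqref{tail4} (the energy discrepancy $\tfrac{\e}{2}|\nabla\uu_\e|^2+\tfrac{1}{\e}F-|\nabla\psi_\e|$), \eqref{tail3} (the calibration defect $(1-\bxi\cdot\nn_\e)|\nabla\psi_\e|$), and $J_\e^1$, whose three pieces collect the pointwise corrections from \eqref{projection1}--\eqref{gougudingli}, namely $|\nabla\psi_\e|-\e|\nabla\uu_\e|^2$, $|\nabla\uu_\e|^2-|\Pi_{\uu_\e}\nabla\uu_\e|^2$, and $-\e(\nabla\HH)_{ij}(\p_i\uu_\e-\Pi_{\uu_\e}\p_i\uu_\e)\cdot(\p_j\uu_\e-\Pi_{\uu_\e}\p_j\uu_\e)$.

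The main obstacle is the algebraic bookkeeping in invoking Lemma \ref{lemma:expansion 1}: one must (i) pick the index conventions so that the contraction $(\nabla\HH)^{\mathsf{T}}\bxi\cdot\nabla\psi_\e$ is correctly identified with $\nabla\HH:(\bxi\otimes\nn_\e)|\nabla\psi_\e|$, (ii) combine the integration by parts of $(\HH\cdot\nabla)\bxi\cdot\nabla\psi_\e$ with the stress-tensor identity $\nabla\cdot T_\e=\HH_\e|\nabla\uu_\e|$ (which is the engine of Lemma \ref{lemma:expansion 1}) so that the target cross-term $\int\HH_\e\cdot\HH\,|\nabla\uu_\e|$ actually emerges, and (iii) distinguish the three deviations $|\bxi|$ vs $|\nn_\e|$, $|\nabla\uu_\e|^2$ vs $|\Pi_{\uu_\e}\nabla\uu_\e|^2$, and $|\nabla\psi_\e|$ vs $\e|\nabla\uu_\e|^2$, so that each is retained as a non-negative coercive remainder inside $J_\e^1$ rather than discarded.
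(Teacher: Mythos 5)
Your proposal is correct and follows essentially the same route as the paper's proof: differentiate in time via the dissipation law and the chain rule for $\psi_\e$, integrate by parts using \eqref{bc n and H}, invoke Lemma \ref{lemma:expansion 1} to reorganize the $\nabla\HH$ contractions through the stress-tensor identity, complete the two squares, and expand \eqref{tail1} to match the cross terms, with the residual pieces sorted into \eqref{tail4}, \eqref{tail3}, and $J_\e^1$. Your side observation that $J_\e^2$ in fact vanishes identically is sound (since \eqref{xi der1} holds in $B_{\delta_0}(\Sigma_t)$ while $\bxi$ and all its derivatives vanish outside $\overline{B_{\delta_0}(\Sigma_t)}$ by \eqref{phi func}), whereas the paper retains $J_\e^2$ as a formal remainder and bounds $\int J_\e^2 \lesssim E_\e[\uu_\e|\Sigma]$ in the proof of Proposition \ref{gronwallprop}; either treatment is fine for the argument.
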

\begin{proof}[Proof of Lemma \ref{lemma exact dt relative entropy}]
The proof here  is exactly the same as  in \cite[Lemma 4.4]{MR4284534}.  Note that in the statement of this lemma, the term $\frac 1{2\e }\int \(\e ^2 \left| \p_t \uu_\e    \right|^2-|\HH_\e |^2\)\,dx$  is missing, but  the proof of the identity there is correct (cf. see \cite[equation (4.33)]{MR4284534}).

We shall   employ the  Einstein summation convention by summing over repeated Latin indices.
	Using the energy dissipation law  \eqref{dissipation} and adding zero, we compute the time derivative of the energy \eqref{entropy} by
	\begin{align}
	&\frac{d}{d t} E_\e  [ \uu_\e   | \Sigma]
	+\e \int |\p_t \uu_\e  |^2\,d x-\int (\nabla \cdot \bxi )   \p  \dd_F   (\uu_\e  )\cdot \p_t \uu_\e   \,d x\nonumber\\
	=&\int   \left(\HH  \cdot \nabla\right) \bxi \cdot\nabla \psi_\e \,d x
	+\int \left(\nabla \HH \right)^{\mathsf{T}} \bxi  \cdot\nabla \psi_\e \,d x+ \int J_\e^2\, d x.
	 \label{time deri 1}
	\end{align}
	Due to the symmetry of the Hessian of $\psi_\e $  and the boundary conditions \eqref{bc n and H}, we have
	\begin{align*}
	\int \nabla \cdot (\bxi  \otimes \HH ) \cdot \nabla \psi_\e  \, d x =  \int \nabla \cdot (\HH \otimes \bxi ) \cdot \nabla \psi_\e  \, d x.
	\end{align*}
	Hence, the first integral on the right-hand side  of \eqref{time deri 1} can be rewritten as
	\begin{align*}
	&\int\left(\HH  \cdot \nabla\right) \,\bxi  \cdot \nabla \psi_\e \, d x\nonumber \\
	&=\int \nabla \cdot (\bxi  \otimes \HH ) \cdot \nabla \psi_\e  \, d x
	-\int (\nabla \cdot \HH ) \,\bxi  \cdot \nabla \psi_\e \, d x\nonumber \\
	&= \int(\nabla \cdot \bxi ) \,\HH  \cdot \nabla \psi_\e \, d x
	+\int(\bxi  \cdot \nabla) \,\HH  \cdot \nabla \psi_\e \, d x
	-\int (\nabla \cdot \HH ) \,\bxi   \cdot \nabla \psi_\e \,d x.
	\end{align*}
	Therefore
	\begin{equation*}
	\begin{split}
	&\frac{d}{d t} E_\e  [ \uu_\e   | \Sigma]
	+\e \int |\p_t \uu_\e  |^2\,d x-\int (\nabla \cdot \bxi )   \p  \dd_F   (\uu_\e  )\cdot \p_t \uu_\e  \,d x \\
	=& \int (\nabla \cdot \bxi )\, \HH  \cdot \nabla \psi_\e  d x+\int (\bxi  \cdot \nabla) \,\HH  \cdot \nabla \psi_\e \,d x
	-\int (\nabla \cdot \HH )\, \bxi   \cdot \nabla \psi_\e \,d x\\
	&   +\int \nabla \HH : \(\bxi  \otimes\nn_\e\)\left|\nabla \psi_\e\right| d x+ \int J_\e^2\, d x.
	\end{split}
	\end{equation*}
	Using   \eqref{expansion1}   to replace the 3nd and 4th integrals on the right-hand side above yields	\begin{align}\label{time deri 3-}
&\frac{d}{d t} E_\e  [ \uu_\e   | \Sigma]
	+\e \int |\p_t \uu_\e  |^2\,d x-\int  (\nabla \cdot \bxi )  \p  \dd_F   (\uu_\e  )\cdot \p_t \uu_\e  \,d x
	\\   =& \int (\nabla \cdot \bxi ) \,\HH   \cdot \nabla \psi_\e\,d x
	+\int (\bxi  \cdot \nabla)\, \HH  \cdot \nabla \psi_\e\,d x
	+\int \nabla \HH : (\bxi -\nn_\e  ) \otimes\nn_\e\left|\nabla \psi_\e\right|\,d x\nonumber\\
	&   +\int \HH_\e \cdot \HH |\nabla \uu_\e  |\,d x
	+ \int \nabla\cdot\HH  \( \frac{\e }2 |\nabla \uu_\e  |^2 +\frac{1}\e  F  (\uu_\e  ) -|\nabla \psi_\e | \)\,d x\nonumber\\&
	+\int \nabla\cdot\HH \(|\nabla\psi_\e |-\bxi \cdot \nabla\psi_\e \)\,d x-\int     (\nabla \HH)_{ij} \,\e  \(\p_i \uu_\e  \cdot \p_j \uu_\e    \)\, d x\nonumber\\
	&
	+\int \nabla \HH : \nn_\e   \otimes\nn_\e\left|\nabla \psi_\e\right|\,d x   +\int J_\e^2\, dx.\nonumber
	\end{align}
We claim that   $J_\e^1$ arises from the 2nd and 3rd to last integral. Indeed, when $\p \dd_F(\uu_\e )\neq 0$ and $\uu_\e\notin B_{\delta_0}(\mm)$,	then it follows from \eqref{projection1} and \eqref{ADM chain rule} that 
\begin{align*}
\Pi_{\uu_\e  } \p_i \uu_\e   \cdot \Pi_{\uu_\e  } \p_j \uu_\e |\p\dd_F(\uu_\e)|^2 =\p_i \psi_\e  \p_j \psi_\e \overset{\eqref{projectionnorm}}=n_\e^i n_\e^j |\Pi_{\uu_\e  } \nabla \uu_\e|^2|\p\dd_F(\uu_\e)|^2,
\end{align*}
where $(n_\e^i)_{1\leq i\leq d}=\nn_\e$.
This implies 
\[\Pi_{\uu_\e  } \p_i \uu_\e   \cdot \Pi_{\uu_\e  } \p_j \uu_\e=n_\e^i n_\e^j|\Pi_{\uu_\e  } \nabla \uu_\e|^2.\label{key identity chain}\]
In other   cases defining \eqref{projection1}, the   equation \eqref{key identity chain} also holds.  Using the orthogonality of  \eqref{projection1}, adding  zero and using \eqref{key identity chain}, we find  
	\begin{equation*}
	\begin{split}
	&   \nabla \HH : \nn_\e   \otimes\nn_\e\left|\nabla \psi_\e\right|-   (\nabla \HH)_{ij} \,\e  \(\p_i \uu_\e  \cdot \p_j \uu_\e    \)\\
	 =&  \nabla \HH : \nn_\e   \otimes\nn_\e\left|\nabla \psi_\e\right|-   \e (\nabla \HH)_{ij}(\Pi_{\uu_\e  } \p_i \uu_\e   \cdot \Pi_{\uu_\e  } \p_j \uu_\e  ) \\
	& -   (\nabla \HH)_{ij} \,\e  \Big((\p_i \uu_\e  -\Pi_{\uu_\e  } \p_i \uu_\e  )\cdot(\p_j \uu_\e  -\Pi_{\uu_\e  } \p_j \uu_\e  )\Big)  =  J_\e^1,
	\end{split}
	\end{equation*}
	and this finish the proof of the claim.

Now we write   the sum of the integrands of  2nd and  3rd  integrals on the right-hand side  of \eqref{time deri 3-} into a quadratic term of the difference: using the definition \eqref{normal diff} of $\nn_\e $ and $\nabla \HH   :(\bxi \otimes \bxi )=0$ (due to \eqref{def:xi} and \eqref{normal H}), we have 
\begin{align*}
&(\bxi  \cdot \nabla)\, \HH  \cdot \nabla \psi_\e+\nabla \HH : (\bxi -\nn_\e  ) \otimes\nn_\e |\nabla \psi_\e |\\
=&(\bxi  \cdot \nabla)\, \HH  \cdot \nn_\e |\nabla \psi_\e|+\nabla \HH : (\bxi -\nn_\e  ) \otimes\nn_\e |\nabla \psi_\e |-\nabla \HH   :(\bxi \otimes \bxi )|\nabla \psi_\e |\\
=& \nabla \HH  : (\nn_\e\otimes  \bxi) |\nabla \psi_\e|+\nabla \HH : (\bxi -\nn_\e  ) \otimes\nn_\e |\nabla \psi_\e |-\nabla \HH   :(\bxi \otimes \bxi )|\nabla \psi_\e |\\
=&-\nabla \HH : (\bxi -\nn_\e  )^{\otimes 2}\left|\nabla \psi_\e\right|.
\end{align*}
Using this identity, we can   merge  the 2nd and  3rd  integrals on the right-hand side  of \eqref{time deri 3-}:
	\begin{align}\label{time deri 3}
	&\frac{d}{d t} E_\e  [ \uu_\e   | \Sigma]
	=-\e \int |\p_t \uu_\e  |^2\, dx+\int  (\nabla \cdot \bxi )  \p  \dd_F   (\uu_\e  )\cdot \p_t \uu_\e  \, dx\nonumber\\
	&+ \int (\nabla\cdot \bxi ) \,\HH   \cdot \nabla \psi_\e\, dx+\int \HH_\e \cdot \HH |\nabla \uu_\e  | \, dx -\int \nabla \HH : (\bxi -\nn_\e  )^{\otimes 2}\left|\nabla \psi_\e\right|\, dx\nonumber\\
	&   +\int \(\nabla\cdot\HH\)  \Big( \frac{\e }2 |\nabla \uu_\e  |^2 +\frac{1}\e  F  (\uu_\e  ) -|\nabla \psi_\e | \Big)\, dx\nonumber\\&+\int (\nabla\cdot\HH) \(1-\bxi \cdot \nn_\e  \)|\nabla\psi_\e |\, dx+ \int (J_\e^1+J_\e^2) \, dx.
	\end{align}

	
	Now we complete squares for the first four terms on the right-hand side  of \eqref{time deri 3}.
	Reordering terms, we have
	\begin{align*}
	\notag-&\e  |\p_t \uu_\e  |^2+   (\nabla \cdot \bxi )  \p  \dd_F   (\uu_\e  )\cdot \p_t \uu_\e  
	+ (\nabla \cdot \bxi ) \HH   \cdot \nabla \psi_\e+ \HH_\e \cdot \HH |\nabla \uu_\e  |
	\\\notag&= -\frac1{2\e } \Big(  |\e  \p_t \uu_\e  |^2 -2(\nabla \cdot \bxi )  \p  \dd_F   (\uu_\e  )\cdot \e  \p_t \uu_\e  
	+(\nabla \cdot \bxi )^2 | \p  \dd_F   (\uu_\e  )|^2 \Big)
	\\\notag&\quad - \frac1{2\e } |\e  \p_t \uu_\e  |^2 + \frac1{2\e }(\nabla \cdot \bxi )^2 | \p  \dd_F   (\uu_\e  )|^2
	+ (\nabla \cdot \bxi ) \HH   \cdot \nabla \psi_\e
	\\\notag&\quad - \frac1{2\e } \Big( |\HH_\e |^2 - 2 \e  |\nabla \uu_\e  | \HH_\e \cdot  \HH + \e ^2 |\nabla \uu_\e  |^2 |\HH|^2\Big)
	+ \frac1{2\e } \Big( |\HH_\e |^2 + \e ^2 |\nabla \uu_\e  |^2 |\HH|^2\Big)
	\\&\notag =  -\frac1{2\e } \Big|\e  \p_t \uu_\e  - (\nabla \cdot \bxi ) \p  \dd_F   (\uu_\e  ) \Big|^2
	- \frac1{2\e } \Big|\HH_\e  - \e  |\nabla \uu_\e  | \HH \Big|^2
	- \frac1{2\e }  |\e  \p_t \uu_\e  |^2 +\frac1{2\e }  |\HH_\e |^2
	\\&\quad + \frac1{2\e } \Big( (\nabla \cdot \bxi )^2  |\p  \dd_F   (\uu_\e  )|^2 + 2\e (\nabla \cdot \bxi )  \nabla \psi_\e  \cdot \HH + |\e  \Pi_{\uu_\e  }\nabla \uu_\e  |^2 |\HH|^2 \Big)
	\\\notag&\quad +\frac\e {2} \left(|\nabla \uu_\e  |^2- | \Pi_{\uu_\e  }\nabla \uu_\e  |^2\right)  |\HH|^2.
	\end{align*}
	Using  \eqref{normal diff}   and the chain rule \eqref{projectionnorm}, the terms above  form the last missing square.
	Integrating over the domain $\Omega$ and substituting into \eqref{time deri 3} we arrive at \eqref{time deri 4}.
	 
\end{proof}

\begin{proof}[Proof of Proposition \ref{gronwallprop}]
	We first  estimate the right-hand side  of \eqref{time deri 4} by $E_\e  [\uu_\e |\Sigma]$ up to a constant that only depends on $\Sigma_t$. We start with \eqref{tail1}: it follows from the  triangle inequality that
	\begin{equation*}
	\begin{split}
	 \int& \left|\e^{-1/2} (\nabla \cdot \bxi ) |\p  \dd_F   (\uu_\e  )|\nn_\e  +\sqrt{\e } |\Pi_{\uu_\e  } \nabla \uu_\e  | \HH\right|^2d x
\\	\leq &\int \left|(\nabla\cdot \bxi )
	\left(	   \e^{-1/2} |\p  \dd_F   (\uu_\e  )|-\sqrt{\e } |\Pi_{\uu_\e  } \nabla \uu_\e  |
	\right)\nn_\e \right|^2\,d x\\&+\int \Big|(\nabla\cdot \bxi )\sqrt{\e }  |\Pi_{\uu_\e  } \nabla \uu_\e  |(\nn_\e -\bxi )\Big|^2\,	d x	\\&+ \int \left|
	\big((\nabla \cdot \bxi ) \bxi  +\HH \big)  \sqrt{\e } |\Pi_{\uu_\e  } \nabla \uu_\e  | \right|^2\,d x.
	\end{split}
	\end{equation*}
	The first  integral on the right-hand side  of the above inequality is controlled by \eqref{energy bound2}.  Due to the elementary inequality  $|\bxi  - \nn_\e |^2 \leq 2 (1-\nn_\e \cdot\bxi )$,   the second integral is controlled by \eqref{energy bound1}. The third integral can be treated using  the relation $\HH=(\HH\cdot\bxi ) \bxi +O(\dd_\Sigma(x,t))$ and \eqref{div xi H}. So it can be  controlled by \eqref{energy bound3}.

	The integrals in \eqref{tail2} can be controlled using \eqref{energy bound0} and \eqref{energy bound1}. The integrals in \eqref{tail4} is controlled by \eqref{energy bound-1}.
The first term in \eqref{tail3} can be controlled using \eqref{energy bound1}.  It remains to estimate   \eqref{J1} and \eqref{J2}. The last two terms defining  $J_\e^1$ can be bounded using   \eqref{energy bound0}. Therefore,
	\begin{align*}
	\int J_\e^1\, dx
	\overset{\eqref{energy bound0}}\leq &\int \nabla \HH: \( \nn_\e  \otimes (\nn_\e -\bxi )\) \(|\nabla \psi_\e |-\e  |\nabla \uu_\e  |^2\)\, dx\nonumber\\
	&+\int (\bxi \cdot \nabla) \HH\cdot \nn_\e \(|\nabla \psi_\e |-\e  |\nabla \uu_\e  |^2\)\, dx+ C E_\e  [\uu_\e |\Sigma]\nonumber\\
	\overset{\eqref{normal H}}\lesssim & \int  |\nn_\e -\bxi | \Big(\e  |\nabla \uu_\e  |^2-\e  |\Pi_{\uu_\e  }\nabla \uu_\e  |^2\Big)\, dx\\
	&+\int  |\nn_\e -\bxi | \left|\e  |\Pi_{\uu_\e  }\nabla \uu_\e  |^2-|\nabla \psi_\e |\right| \, dx\nonumber\\
	&+ \int\min \(\dd_\Sigma^2 ,1\)  \(|\nabla \psi_\e |+\e  |\nabla \uu_\e  |^2\)\, dx+   E_\e  [\uu_\e |\Sigma].
	\end{align*}
	  The first  and the third  integrals in the last display  can be estimated using  \eqref{energy bound0} and \eqref{energy bound3} respectively. 
	Then  we employ \eqref{projectionnorm} and yield 
	\begin{align*}
	\int J_\e^1\, dx
	 \lesssim &\, \int  |\nn_\e -\bxi | \left| \e  |\Pi_{\uu_\e  }\nabla \uu_\e  |^2-|\nabla \psi_\e |\right|\, dx+   E_\e  [\uu_\e |\Sigma]\nonumber\\
	 = &\,  \int  |\nn_\e -\bxi | \sqrt{\e } |\Pi_{\uu_\e  }\nabla \uu_\e  | \left| \e^{1/2} |\Pi_{\uu_\e  }\nabla \uu_\e  |-\e^{-1/2}|\p  \dd_F   (\uu_\e  )|\right|\, dx+  E_\e  [\uu_\e |\Sigma].
	\end{align*}
	
	Finally applying the Cauchy-Schwarz inequality and then \eqref{energy bound2} and \eqref{energy bound1}, we obtain\\
$\int J_\e^1 \lesssim  E_\e  [\uu_\e |\Sigma].$
	As for $J_\e^2$ \eqref{J2}, we employ \eqref{xi der1} and  \eqref{energy bound3}  to obtain  $\int J_\e^2  \lesssim E_\e  [\uu_\e  | \Sigma].$
  Altogether,  we prove that the right-hand side  of \eqref{time deri 4} is bounded by $E_\e  [\uu_\e |\Sigma]$ up to a multiplicative constant which only depends on $\Sigma_t$.
\end{proof}


\begin{thebibliography}{10}

\bibitem{MR1308851}
N.~D. Alikakos, P.~W. Bates, and X.~Chen.
\newblock Convergence of the {C}ahn-{H}illiard equation to the {H}ele-{S}haw
  model.
\newblock {\em Arch. Rational Mech. Anal.}, 128(2):165--205, 1994.

\bibitem{MR969514}
L.~Ambrosio and G.~Dal~Maso.
\newblock A general chain rule for distributional derivatives.
\newblock {\em Proc. Amer. Math. Soc.}, 108(3):691--702, 1990.

\bibitem{MR1857292}
L.~Ambrosio, N.~Fusco, and D.~Pallara.
\newblock {\em Functions of bounded variation and free discontinuity problems}.
\newblock Oxford Mathematical Monographs. The Clarendon Press, Oxford
  University Press, New York, 2000.

\bibitem{MR1443865}
L.~Bronsard and B.~Stoth.
\newblock The singular limit of a vector-valued reaction-diffusion process.
\newblock {\em Trans. Amer. Math. Soc.}, 350(12):4931--4953, 1998.

\bibitem{MR1835418}
D.~Burago, Y.~Burago, and S.~Ivanov.
\newblock {\em A course in metric geometry}, volume~33 of {\em Graduate Studies
  in Mathematics}.
\newblock American Mathematical Society, Providence, RI, 2001.

\bibitem{MR1425577}
X.~Chen.
\newblock Global asymptotic limit of solutions of the {C}ahn-{H}illiard
  equation.
\newblock {\em J. Differential Geom.}, 44(2):262--311, 1996.

\bibitem{MR1100211}
Y.~G. Chen, Y.~Giga, and S.~Goto.
\newblock Uniqueness and existence of viscosity solutions of generalized mean
  curvature flow equations.
\newblock {\em J. Differential Geom.}, 33(3):749--786, 1991.

\bibitem{MR990191}
Y.~M. Chen and M.~Struwe.
\newblock Existence and partial regularity results for the heat flow for
  harmonic maps.
\newblock {\em Math. Z.}, 201(1):83--103, 1989.

\bibitem{MR1672406}
P.~{D}e Mottoni and M.~Schatzman.
\newblock Geometrical evolution of developed interfaces.
\newblock {\em Trans. Amer. Math. Soc.}, 347(5):1533--1589, 1995.

\bibitem{MR3409135}
L.~C. Evans and R.~F. Gariepy.
\newblock {\em Measure theory and fine properties of functions}.
\newblock Textbooks in Mathematics. CRC Press, Boca Raton, FL, revised edition,
  2015.

\bibitem{MR1177477}
L.~C. Evans, H.~M. Soner, and P.~E. Souganidis.
\newblock Phase transitions and generalized motion by mean curvature.
\newblock {\em Comm. Pure Appl. Math.}, 45(9):1097--1123, 1992.

\bibitem{MR1100206}
L.~C. Evans and J.~Spruck.
\newblock Motion of level sets by mean curvature. {I}.
\newblock {\em J. Differential Geom.}, 33(3):635--681, 1991.

\bibitem{Fei2023aa}
M.~Fei, F.~Lin, W.~Wang, and Z.~Zhang.
\newblock Matrix-valued {A}llen--{C}ahn equation and the
  {K}eller--{R}ubinstein--{S}ternberg problem.
\newblock {\em Inventiones mathematicae}, 2023.

\bibitem{MR4059996}
M.~Fei, W.~Wang, P.~Zhang, and Z.~Zhang.
\newblock On the {I}sotropic-{N}ematic phase transition for the liquid crystal.
\newblock {\em Peking Math. J.}, 1(2):141--219, 2018.

\bibitem{MR4072686}
J.~Fischer and S.~Hensel.
\newblock Weak-strong uniqueness for the {N}avier-{S}tokes equation for two
  fluids with surface tension.
\newblock {\em Arch. Ration. Mech. Anal.}, 236(2):967--1087, 2020.

\bibitem{fischer2020convergence}
J.~Fischer, T.~Laux, and T.~M. Simon.
\newblock Convergence rates of the {A}llen-{C}ahn equation to mean curvature
  flow: a short proof based on relative entropies.
\newblock {\em SIAM J. Math. Anal.}, 52(6):6222--6233, 2020.

\bibitem{MR985992}
I.~Fonseca and L.~Tartar.
\newblock The gradient theory of phase transitions for systems with two
  potential wells.
\newblock {\em Proc. Roy. Soc. Edinburgh Sect. A}, 111(1-2):89--102, 1989.

\bibitem{MR1803974}
J.~E. Hutchinson and Y.~Tonegawa.
\newblock Convergence of phase interfaces in the van der
  {W}aals-{C}ahn-{H}illiard theory.
\newblock {\em Calc. Var. Partial Differential Equations}, 10(1):49--84, 2000.

\bibitem{MR1237490}
T.~Ilmanen.
\newblock Convergence of the {A}llen-{C}ahn equation to {B}rakke's motion by
  mean curvature.
\newblock {\em J. Differential Geom.}, 38(2):417--461, 1993.

\bibitem{MR3353807}
R.~L. Jerrard and D.~Smets.
\newblock On the motion of a curve by its binormal curvature.
\newblock {\em J. Eur. Math. Soc. (JEMS)}, 17(6):1487--1515, 2015.

\bibitem{MR2977424}
S.~G. Krantz and H.~R. Parks.
\newblock {\em The implicit function theorem}.
\newblock Modern Birkh\"{a}user Classics. Birkh\"{a}user/Springer, New York,
  2013.
\newblock History, theory, and applications, Reprint of the 2003 edition.

\bibitem{MR4284534}
T.~Laux and Y.~Liu.
\newblock Nematic-isotropic phase transition in liquid crystals: a variational
  derivation of effective geometric motions.
\newblock {\em Arch. Ration. Mech. Anal.}, 241(3):1785--1814, 2021.

\bibitem{MR3847750}
T.~Laux and T.~M. Simon.
\newblock Convergence of the {A}llen-{C}ahn equation to multiphase mean
  curvature flow.
\newblock {\em Comm. Pure Appl. Math.}, 71(8):1597--1647, 2018.

\bibitem{lin2020isotropic}
F.~Lin and C.~Wang.
\newblock Isotropic-nematic phase transition and liquid crystal droplets.
\newblock {\em Comm. Pure Appl. Math.}, to appear.

\bibitem{Lin2012a}
F.-H. Lin, X.-B. Pan, and C.-Y. Wang.
\newblock Phase transition for potentials of high-dimensional wells.
\newblock {\em Comm. Pure Appl. Math.}, 65(6):833--888, 2012.

\bibitem{MR4002307}
F.-H. Lin and C.-Y. Wang.
\newblock Harmonic maps in connection of phase transitions with higher
  dimensional potential wells.
\newblock {\em Chin. Ann. Math. Ser. B}, 40(5):781--810, 2019.

\bibitem{liu2021sharp}
Y.~Liu.
\newblock Phase transition of anisotropic {G}inzburg--{L}andau equation.
\newblock {\em arXiv preprint arXiv:2111.15061}, 2021.

\bibitem{MR803255}
L.~Modica.
\newblock A gradient bound and a {L}iouville theorem for nonlinear {P}oisson
  equations.
\newblock {\em Comm. Pure Appl. Math.}, 38(5):679--684, 1985.

\bibitem{MR0445362}
L.~Modica and S.~Mortola.
\newblock Un esempio di {$\Gamma ^{-}$}-convergenza.
\newblock {\em Boll. Un. Mat. Ital. B (5)}, 14(1):285--299, 1977.

\bibitem{MR3495430}
A.~Pisante and F.~Punzo.
\newblock Allen-{C}ahn approximation of mean curvature flow in {R}iemannian
  manifolds {I}, uniform estimates.
\newblock {\em Ann. Sc. Norm. Super. Pisa Cl. Sci. (5)}, 15:309--341, 2016.

\bibitem{MR2253464}
M.~R\"{o}ger and R.~Sch\"{a}tzle.
\newblock On a modified conjecture of {D}e {G}iorgi.
\newblock {\em Math. Z.}, 254(4):675--714, 2006.

\bibitem{MR978829}
J.~Rubinstein, P.~Sternberg, and J.~B. Keller.
\newblock Fast reaction, slow diffusion, and curve shortening.
\newblock {\em SIAM J. Appl. Math.}, 49(1):116--133, 1989.

\bibitem{MR1025956}
J.~Rubinstein, P.~Sternberg, and J.~B. Keller.
\newblock Reaction-diffusion processes and evolution to harmonic maps.
\newblock {\em SIAM J. Appl. Math.}, 49(6):1722--1733, 1989.

\bibitem{MR2440879}
N.~Sato.
\newblock A simple proof of convergence of the {A}llen-{C}ahn equation to
  {B}rakke's motion by mean curvature.
\newblock {\em Indiana Univ. Math. J.}, 57(4):1743--1751, 2008.

\bibitem{MR1674799}
H.~M. Soner.
\newblock Ginzburg-{L}andau equation and motion by mean curvature. {I}.
  {C}onvergence.
\newblock {\em J. Geom. Anal.}, 7(3):437--475, 1997.

\bibitem{MR930124}
P.~Sternberg.
\newblock The effect of a singular perturbation on nonconvex variational
  problems.
\newblock {\em Arch. Rational Mech. Anal.}, 101(3):209--260, 1988.

\bibitem{MR2040901}
Y.~Tonegawa.
\newblock Integrality of varifolds in the singular limit of reaction-diffusion
  equations.
\newblock {\em Hiroshima Math. J.}, 33(3):323--341, 2003.

\bibitem{wangnote}
W.~Wang.
\newblock On the minimal pair condition for phase transition with high
  dimensional wells.
\newblock {\em in preparation}.

\end{thebibliography}
  
\end{document}